\definecolor{verylight}{gray}{0.97}
\definecolor{light}{gray}{0.9}
\definecolor{medium}{gray}{0.85}
\definecolor{dark}{gray}{0.6}
\def\NZQ{\mathbb}               
\def\QQ{{\NZQ Q}}
\def\ZZ{{\NZQ Z}}
\def\FF{{\NZQ F}}
\def\G{{\mathcal G}}
\def\HS{\textup{HS}}
\def\pd{\textup{proj}\phantom{.}\!\textup{dim}}
\def\opn#1#2{\def#1{\operatorname{#2}}} 
\opn\chara{char} \opn\length{\ell} \opn\pd{pd} \opn\rk{rk}
\opn\projdim{proj\,dim} \opn\injdim{inj\,dim} \opn\rank{rank}
\opn\depth{depth} \opn\grade{grade} \opn\height{height}
\opn\embdim{emb\,dim} \opn\codim{codim}
\opn\Tr{Tr} \opn\bigrank{big\,rank}
\opn\superheight{superheight}\opn\lcm{lcm}
\opn\trdeg{tr\,deg}
\opn\reg{reg} \opn\lreg{lreg} \opn\ini{in} \opn\lpd{lpd}
\opn\size{size} \opn\sdepth{sdepth}
\opn\link{link}\opn\fdepth{fdepth}\opn\lex{lex}
\opn\tr{tr}
\opn\type{type}
\opn\gap{gap}
\opn\diam{diam}
\opn\Mod{Mod}
\opn\div{div} \opn\Div{Div} \opn\cl{cl} \opn\Cl{Cl}
\opn\Spec{Spec} \opn\Supp{Supp} \opn\supp{supp} \opn\Sing{Sing}
\opn\Ass{Ass} \opn\Min{Min}\opn\Mon{Mon}
\opn\Ann{Ann} \opn\Rad{Rad} \opn\Soc{Soc}
\opn\Im{Im} \opn\Ker{Ker} \opn\Coker{Coker} \opn\Am{Am}
\opn\Hom{Hom} \opn\Tor{Tor} \opn\Ext{Ext} \opn\End{End}
\opn\Aut{Aut} \opn\id{id}
\opn\nat{nat}
\opn\pff{pf}
\opn\Pf{Pf} \opn\GL{GL} \opn\SL{SL} \opn\mod{mod} \opn\ord{ord}
\opn\Gin{Gin} \opn\Hilb{Hilb}\opn\sort{sort}
\opn\PF{PF}\opn\Ap{Ap}
\opn\dist{dist}
\opn\aff{aff}
\opn\relint{relint} \opn\st{st}
\opn\lk{lk} \opn\cn{cn} \opn\core{core} \opn\vol{vol}  \opn\inp{inp} \opn\nilpot{nilpot}
\opn\link{link} \opn\star{star}\opn\lex{lex}\opn\set{set}
\opn\width{wd}
\opn\Fr{F}
\opn\QF{QF}
\opn\G{G}
\opn\type{type}\opn\res{res}
\opn\conv{conv}
\opn\sr{sr}
\opn\gr{gr}
\def\pot#1#2{#1[\kern-0.28ex[#2]\kern-0.28ex]}
\opn\dirlim{\underrightarrow{\lim}}
\opn\inivlim{\underleftarrow{\lim}}
\def\Implies{\ifmmode\Longrightarrow \else
	\unskip${}\Longrightarrow{}$\ignorespaces\fi}
\def\implies{\ifmmode\Rightarrow \else
	\unskip${}\Rightarrow{}$\ignorespaces\fi}
\def\iff{\ifmmode\Longleftrightarrow \else
	\unskip${}\Longleftrightarrow{}$\ignorespaces\fi}
\newtheorem{Theorem}{Theorem}[section]
\newtheorem{Lemma}[Theorem]{Lemma}
\newtheorem{Corollary}[Theorem]{Corollary}
\newtheorem{Proposition}[Theorem]{Proposition}
\newtheorem{Remark}[Theorem]{Remark}
\newtheorem{Example}[Theorem]{Example}
\newtheorem{Definition}[Theorem]{Definition}
\newtheorem{Setup}[Theorem]{Setup}
\newtheorem{Conjecture}[Theorem]{Conjecture}
\newtheorem{Characterization}[Theorem]{Characterization}
\newtheorem{Construction}[Theorem]{Construction}
\newtheorem{Criterion}[Theorem]{Criterion}
\let\epsilon\varepsilon
\let\kappa=\varkappa
\def\qed{\ifhmode\textqed\fi
	\ifmmode\ifinner\hfill\quad\qedsymbol\else\dispqed\fi\fi}
\def\textqed{\unskip\nobreak\penalty50
	\hskip2em\hbox{}\nobreak\hfill\qedsymbol
	\parfillskip=0pt \finalhyphendemerits=0}
\def\dispqed{\rlap{\qquad\qedsymbol}}
\opn\dis{dis}
\def\pnt{{\raise0.5mm\hbox{\large\bf.}}}
\opn\Lex{Lex}
\let\emptyset\varnothing
\let\epsilon\varepsilon
\def\alt{\textup{height}}
\def\Ass{\textup{Ass}}
\def\HS{\textup{HS}}
\def\lcm{\textup{lcm}}
\def\gcd{\textup{gcd}}
\def\set{\textup{set}}
\begin{document}
	
\title{Very well--covered graphs by Betti splittings}
\author{Marilena Crupi, Antonino Ficarra}

\address{Marilena Crupi, Department of mathematics and computer sciences, physics and earth sciences, University of Messina, Viale Ferdinando Stagno d'Alcontres 31, 98166 Messina, Italy}
\email{mcrupi@unime.it}

\address{Antonino Ficarra, Department of mathematics and computer sciences, physics and earth sciences, University of Messina, Viale Ferdinando Stagno d'Alcontres 31, 98166 Messina, Italy}
\email{antficarra@unime.it}

\subjclass[2020]{13D02, 13P10, 13F55, 13H10, 05C75}

\keywords{Complexes, Minimal resolutions, Betti splittings, Homological Shift Ideals, Vertex decomposability, Well--covered graphs, Very well--covered graphs}

\maketitle

\begin{abstract}
	A very well--covered graph is an unmixed graph without isolated vertices such that the height of its edge ideal is half of the number of vertices. We study these graphs by means of Betti splittings and mapping cone constructions.  We show that the cover ideals of Cohen--Macaulay very well--covered graphs are splittable. As a consequence, we compute explicitly the minimal graded free resolution of the cover ideals of such a class of graphs and prove that these graphs have homological linear quotients. Finally, we conjecture the same is true for each power of the cover ideal of a Cohen--Macaulay very well--covered graph, and settle it in the bipartite case.
\end{abstract}

\section*{Introduction}

Since the foundations of \textit{Algebraic Graph Theory}, one of the fundamental problem has been to classify all Cohen--Macaulay graphs \cite{RV}. Let $S = K[x_1, \ldots, x_n]$ be the standard graded polynomial ring with coefficients in a field $K$. It is well--known that if $G$ is a simple undirected graph with $\vert V(G)\vert = n$, one can associate to $G$ two squarefree monomial ideals of $S$, the \textit{edge ideal} $I(G)$ generated by all monomials $x_ix_j$ such that $x_ix_j \in E(G)$,
and the \textit{cover ideal} $J(G)$ which is generated by monomials $\prod_{x_i\in C}x_i$, for all minimal vertex covers $C$ of $G$. A \textit{minimal vertex cover} of $G$ is a subset $C$ of $V(G)$ such that each edge has at least one vertex in $C$ and no proper subset of $C$ has the same
property. $G$ is called a \textit{Cohen--Macaulay} graph if $S/I(G)$ is a Cohen--Macaulay ring. To classify all Cohen--Macaulay graphs is an hopeless task. 

However, the dual problem, namely the problem of classifying all Cohen--Macaulay cover ideals of a graph, is completely solved. Indeed the cover ideal $J(G)$ of a graph $G$ is the \textit{Alexander dual} of the edge ideal of $G$, \emph{i.e.}, $J(G) = I(G)^\vee$, and the fundamental \textit{Eagon--Reiner Criterion} says that $I(G)^\vee$ is Cohen--Macaulay if and only if $I(G)$ has a linear resolution \cite{ER98}. All graphs $G$ such that $I(G)$ has a linear resolution have been characterized by Ralph Fr\"oberg in \cite{Froberg88}. Furthermore, Herzog, Hibi and Zheng showed that for any graph $G$ such that $I(G)$ has a linear resolution, then $I(G)$ has \textit{linear powers}, \emph{i.e.}, for all $k\ge1$, $I(G)^k$ has a linear resolution \cite{HHZ2004}. 

Far beyond these classical problems, a new exciting trend has been initiated by Herzog, Moradi, Rahimbeigi and Zhu in \cite{HMRZ021a} with the introduction of the  \textit{homological shift ideals} (see, also, \cite{Bay019,BJT019,F2,F2Pack,FH2023,HMRZ021b}). Firstly, given a vector ${\bf a}=(a_1,\dots,a_n)\in\ZZ_{\ge0}^n$, the monomial $x_1^{a_1}\cdots x_n^{a_n}$ will be denoted by ${\bf x^a}$. Let $I\subset S$ be a monomial ideal and let $\FF$ be its minimal multigraded free $S$--resolution. Then, the $i$th free $S$--module in $\FF$ is $F_i=\bigoplus_{k=1}^{\beta_i(I)}S(-{\bf a}_{ik})$ with each ${\bf a}_{ik}$ an integer vector in $\ZZ^n$ with non--negative entries. 
The monomial ideal generated by the monomials ${\bf x}^{{\bf a}_{ik}}$ ($k=1, \ldots, \beta_i(I)$),  
denoted by $\HS_i(I)$, is said the \emph{$i$th homological shift ideal} of $I$. Note that $\HS_0(I)=I$. 
A basic goal of this theory is to find those combinatorial and homological properties satisfied by all $\HS_i(I)$, $i=0, \ldots, \pd(I)$.  
In such a case, we will speak about a \textit{homological shift property} of $I$, or if the context is clear, we will simply speak of \textit{homological property}. In \cite{HMRZ021a}, it is conjectured  that for any graph $G$ whose edge ideal $I(G)$ has a linear resolution or linear quotients, then $\HS_i(I(G))$ satisfy the same property 
 for all $i$, \emph{i.e.}, $I(G)$ has \textit{homological linear resolution} or \textit{homological linear quotients}. 
 The conjecture holds true for $i=1$ \cite[Theorem 4.7]{HMRZ021a}.

Inspired by the problems above, in this article we consider the dual problem of Herzog \textit{et all} in \cite{HMRZ021a}. We want to classify the graphs $G$ for which $I(G)^\vee$ has a \textit{homological linear resolution} and those with \textit{homological linear quotients}. Of course, first one needs to know all graphs $G$ such that $I(G)^\vee$ has a linear resolution. One can observe that for $I(G)^\vee$ to have a linear resolution, it is necessary that it is \textit{equigenerated}, which is equivalent to requiring that $G$ is \emph{unmixed} 
in the sense that all the minimal vertex covers of $G$ have the same cardinality. On the other hand, if $G$ is unmixed and without isolated vertices, then $2\alt(I(G)) \ge \vert V(G)\vert$ \cite{GV}. Therefore, to study the question above it is natural to restrict ourself to the ``best possible class'' of graphs: \emph{all unmixed graphs $G$ without isolated vertices and such that $2\alt(I(G)) = \vert V(G)\vert$}. In such a case, $G$ is called a \emph{very well--covered graph}. Such a class of graphs has been studied by many authors (see, for instance, \cite{CRT2011,OF,KTY2018,KPFTY,KPTY2022,Kummini2009,MMCRTY2011}). 

The outline of this article is as follows. Section \ref{sec:1} contains a quick review on Betti splittings, homological shift ideals, and some notions from the graph theory. In particular, the classification of all Cohen--Macaulay very well--covered graphs obtained by Crupi, Rinaldo and Terai in \cite{CRT2011} is fundamental for the development of the article.

In Section \ref{sec:2}, we investigate Betti splittings of the cover ideals of Cohen–Macaulay very well--covered graphs (Proposition \ref{Prop:BettiSplitVWCG}). The Betti splitting technique introduced in \cite{FHT2009} by Francisco, Ha and Van Tuyl  is a very useful tool throughout the article. 

In Section \ref{sec:3}, if $G$ is a Cohen--Macaulay very well--covered graph, we use the Betti splitting, discussed in Section \ref{sec:2}, to construct explicitly a minimal free resolution of $I(G)^\vee$. A minimal free resolution of $I(G)^\vee$ has also been constructed in \cite{KTY2018} by different tools. The main advantage of the Betti splitting technique is that one does not need to verify if the  defined complex is exact or minimal, but only to determine the suitable \textit{comparison maps} \cite[Proposition 2.1]{FHT2009}. 
The features of our resolution are required in Section \ref{sec:4} to achieve our main goal. In the last part of Section \ref{sec:3}, we draw some consequences of Theorem \ref{Thm:MinFreeResGCMverywell}. Firstly, we get a formula for the Betti numbers of $I(G)^\vee$ which is independent on the characteristic of the base field $K$ (Corollary \ref{cor:res}). Then, some formulas for the projective dimension of the cover ideal of a Cohen–Macaulay very well--covered graph $G$ are determined. In particular, in Corollary \ref{cor:pdGa(G)}, we recover a result due to Mahmoudi \textit{et all} \cite[Lemma 3.4]{MMCRTY2011} and, furthermore, we provide a new proof of it by Betti splitting (Remark \ref{rem:pdGa(G)}). Finally, we show that $I(G)^\vee$ has the \textit{alternating sum property} \cite[Definition 4.1]{Zheng2004} and also we determine a formula for the multiplicity of $S/I(G)^\vee$ (Proposition \ref{prop:summult}).

Section \ref{sec:4} contains our main result which states that the cover ideal of a Cohen--Macaulay very well--covered graph has homological linear quotients (Theorem \ref{Thm:HSCMVWCG}). Then, we infer that a very well--covered graph $G$ is Cohen--Macaulay if and only if $I(G)^\vee$ has homological linear resolution which is equivalent to  $I(G)^\vee$ having homological linear quotients (Theorem \ref{Thm:ClassificationHomLin}). 
An interesting consequence about the cover ideal of the \textit{whisker graph} $G^*$ of a graph $G$ is drawn in Corollary \ref{cor:HSwhisker}. 

Our experiments and the results obtained have lead us to conjecture that given a Cohen--Macaulay very well--covered graph $G$, all the powers of $I(G)^\vee$ have homological linear quotients (Conjecture \ref{Conj:PowersHSCMverywell}). At present we are able to prove our conjecture only for the subclass of Cohen--Macaulay bipartite graphs. For this goal, we carefully study the powers of the \textit{Hibi ideals}. In 1987, Hibi introduced a fundamental class of ideals associated to a finite poset \cite{Hibi87}. More precisely, let $(P,\succeq)$ be a finite poset, a \textit{poset ideal} $\mathcal{I}$ of $P$ is a subset of $P$ such that for any $\alpha \in\mathcal{I}$ and any $\beta \in P$ with $\beta\preceq\alpha$, then $\beta\in\mathcal{I}$. If $\mathcal{J}(P)$ is the set of all poset ideals of $P$, ordered by inclusion, then $\mathcal{J}(P)$ is a distributive lattice. Indeed, Birkhoff's fundamental theorem \cite[Theorem 3.4.1]{Stanley86} shows that any distributive lattice arises in such a way. To any $\mathcal{I}\in\mathcal{J}(P)$ one can associate the squarefree monomial $u_{\mathcal{I}}=(\prod_{p\in\mathcal{I}}x_p)(\prod_{p\in P\setminus\mathcal{I}}y_p)$ in the polynomial ring $K[\{x_p, y_p\}_{p\in P}]$. Then the Hibi ideal $H_P$ associated to $(P,\succeq)$ is the squarefree monomial ideal generated by all $u_{\mathcal{I}}$, $\mathcal{I}\in\mathcal{J}(P)$. The importance of such a class lies in the fact that the class of Hibi ideals coincides with the class of the cover ideals of Cohen--Macaulay bipartite graphs \cite{HH2005a} (see, also, \cite[Lemma 9.1.9 and Theorem 9.1.13]{JT}). Therefore, one can focus on the homological shifts of powers of Hibi ideals. In Construction \ref{Constr:NewPosetP(ell)}, we associate to any poset $(P,\succeq)$ and any integer $\ell\ge1$ a new poset $(P(\ell),\succeq_\ell)$ and then we show that the $\ell$th power of $H_P$  is equal to $H_{P(\ell)}$ up to \textit{polarization} (Theorem \ref{Thm:PosetEll}). 

Finally, observing that polarization commutes with homological shifts (Lemma \ref{Lem:HSpolarization}) and preserves also the linear quotients property (Lemma \ref{Lem:IwpSets}), we state that all the powers of an Hibi ideal have homological linear quotients (Corollary \ref{Cor:HSHPPowers}). 

All the examples in the article have been verified using \textit{Macaulay2} \cite{GDS} and the package \texttt{HomologicalShiftIdeals} \cite{F2Pack}.

\section{Preliminaries} \label{sec:1}
In this section for the reader's convenience we collect some notions and results we need for the development of the article.

\subsection{A glimpse to mapping cones and Betti splittings.} \label{sub:1}
Let $S=K[x_1,\dots,x_n]$ be a polynomial ring with coefficients in a field $K$. If $I$ is a monomial ideal of $S$,  
it is customary to ask how one can determine its minimal free resolution. 

In order to avoid any ambiguity in what follows, we assume that the trivial ideals $(0)$ and $S$ are monomial ideals since $(0)=(\emptyset)$ and $S = (1)$.

Let $G(I)$ be the unique minimal generating set of $I$. In \cite{FHT2009}, the authors pointed out that to compute a minimal graded free resolution of $I$, one can ``split" the ideal $I$ into ``smaller" ideals $P$, $Q$, \emph{i.e.}, $I=P+Q$ with $G(I)$ disjoint union of $G(P)$ and $G(Q)$.
 Hence, to get the minimal free resolution of $I$, one can use the minimal free resolutions of $P$ and $Q$ together with that of $P\cap Q$.
Indeed, let us consider the short exact sequence 
\begin{equation}\label{eq:ShortExactSequenceBettiSplitting}
0\rightarrow P\cap Q\xrightarrow{\ \psi_{-1}} P\oplus Q\xrightarrow{\ \varphi\ } I=P+Q\rightarrow0,
\end{equation}
where $\psi_{-1}(w)=(w,-w)$ and $\varphi((w,z))=w+z$. Let $\mathbb{A}$ be the minimal free resolution of $P\cap Q$ and let $\mathbb{B}$ be the minimal free resolution of $P\oplus Q$. Note that $\mathbb{B}$ is the direct sum of the minimal free resolutions of $P$ and $Q$. Since all $S$--modules $A_i$ are free, thus projective, there exists a complex homomorphism $\psi:\mathbb{A}\rightarrow\mathbb{B}$ \textit{lifting} the map $\psi_{-1}$, that means a sequence of maps $\psi_i:A_i\rightarrow B_i$ $(i\ge0)$, called the \textit{comparison maps}, making the following diagram 

\begin{equation}\label{diagram:ABpsi-1}
\begin{gathered}
\xymatrixcolsep{2.5pc}\xymatrix{
	\displaystyle
	\mathbb{A}:\cdots\ar[r] &A_2 \ar[d]_{\psi_2}\ar[r]^{d_2^A} & A_1 \ar[d]_{\psi_1}\ar[r]^{d_1^A} & A_0\ar[d]_{\psi_0}\ar[r]^-{d_0^A}&P\cap Q\ar[d]^{\psi_{-1}}\ar[r]&0\\
	\mathbb{B}:\cdots\ar[r] &B_2 \ar[r]_{d_2^B} & B_1 \ar[r]_{d_1^B} & B_0\ar[r]_-{d_0^B}&P\oplus Q\ar[r]&0
}
\end{gathered}
\end{equation}
commutative.
It is well known that $\psi$ gives rise to an acyclic complex $C(\psi)$ whose $0$th homology module is $H_0(C(\psi))=\textup{coker}(\psi_{-1})=(P\oplus Q)/\textup{Im}(\psi_{-1})\cong(P\oplus Q)/(P\cap Q)\cong P+Q=I$, \emph{i.e.}, $C(\psi)$ is a free resolution of $I$ (see, for instance, \cite[Appendix A3.12]{Ei}).\\

The complex $C(\psi)$ is defined as follows:
\begin{enumerate}
	\item[(i)] let $C_0=B_0$, and $C_i=A_{i-1}\oplus B_{i}$, for $i>0$;
	\item[(ii)] let $d_0=\varphi\circ d_0^B$, $d_1=(0, \psi_{0}+d_1^B)$, and $d_i=(-d_{i-1}^A, \psi_{i-1}+d_i^B)$, for $i>1$. 
	\end{enumerate}
This procedure, known as the \textit{mapping cone}, may be visualized as follows:
$$
\xymatrixcolsep{2.5pc}\xymatrix{
	\displaystyle
	\mathbb{A}[-1]:\cdots\ar[r] &A_2 \ar @{} [d] |{\text{\large$\oplus$}} \ar[rd]^{\psi_2}\ar[r]^{d_2^A} & A_1 \ar @{} [d] |{\text{\large$\oplus$}}  \ar[rd]^{\psi_1}\ar[r]^{d_1^A} & A_0 \ar @{} [d] |{\text{\large$\oplus$}} \ar[rd]^{\psi_0}\ar[r]^-{d_0^A}& P\cap Q\ar[r]&0&\\
	\mathbb{B}:\cdots\ar[r]&B_3\ar[r]_{d_3^B} &B_2 \ar[r]_{d_2^B} & B_1 \ar[r]_{d_1^B} &B_0\ar[r]_{d_0^B\ \ }&P\oplus Q\ar[r]&0
}
$$
Here $\mathbb{A}[-1]$ is the complex $\mathbb{A}$ \textit{homologically shifted} by $-1$.\smallskip

Unfortunately, the free resolution $C(\psi)$ is not always minimal. Below, we describe a special situation in which this happens.

For our purpose, we need to quote the next notions from \cite{FHT2009}.

\begin{Definition}\rm\label{def:Bettisplitting}
Let $I$, $P$, $Q$ be monomial ideals of $S$ such that $G(I)$ is the disjoint union of $G(P)$ and $G(Q)$. We say that $I=P+Q$ is a \textit{Betti splitting} if
\begin{equation}\label{eq:BettiSplittingI=P+Q}
\beta_{i,j}(I)=\beta_{i,j}(P)+\beta_{i,j}(Q)+\beta_{i-1,j}(P\cap Q), \ \ \ \textup{for all}\ i,j.
\end{equation}
\end{Definition}

\begin{Definition} \rm Let $I$ be a monomial ideal of $S$. Let $P$ be the ideal generated by all elements of $G(P)$ divisible by $x_i$ and let $Q$ be the ideal generated by all other elements of $G(I)$. We call $I= P+Q$ an \textit{$x_i$--partition} of $I$.\\ If $I = P+Q$ is also a Betti splitting, we call $I= P+Q$ an \textit{$x_i$--splitting}.
\end{Definition}

The following two results will be needed later.
\begin{Proposition}\label{Prop:BettiSplitxi}
	\textup{(\cite[Corollary 2.7]{FHT2009}).} Let $I=P+Q$ be a $x_i$--partition of $I$ and $P$ be the ideal generated
	by all elements of $G(I)$ divisible by $x_i$. If the minimal graded free resolution of $P$ is linear, then $I=P+Q$ is a Betti splitting.
\end{Proposition}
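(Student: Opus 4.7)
The plan is to show that, for any multigraded lifting $\psi$ of $\psi_{-1}$ in diagram $(\ref{diagram:ABpsi-1})$, the mapping cone $C(\psi)$ is already the minimal multigraded free resolution of $I$. Once this is established, reading off the rank of each $C_i = A_{i-1}\oplus B_i$ in each multidegree $j$ yields the identity $\beta_{i,j}(I) = \beta_{i,j}(P) + \beta_{i,j}(Q) + \beta_{i-1,j}(P\cap Q)$, which is precisely $(\ref{eq:BettiSplittingI=P+Q})$.

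Let $d$ denote the common degree of the elements of $G(P)$, well-defined by the linearity hypothesis. I would first collect three facts about the multigraded shifts in the minimal resolutions of $P$, $Q$, and $P \cap Q$. Since every multigraded shift of the minimal resolution of a monomial ideal is supported on a least common multiple of its generators, and since every element of $G(P)$ is divisible by $x_i$ while no element of $G(Q)$ is, every multigraded shift $\mathbf{b}$ of $P$ satisfies $x_i \mid \mathbf{x}^{\mathbf{b}}$ (and by linearity $|\mathbf{b}| = k + d$ in homological degree $k$), while every multigraded shift $\mathbf{c}$ of $Q$ satisfies $x_i \nmid \mathbf{x}^{\mathbf{c}}$. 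Next, if $u \in G(P)$ and $v \in G(Q)$, then $v \nmid u$, for otherwise $u$ would not belong to $G(I)$; hence $\deg \lcm(u,v) > \deg u = d$, showing that $P \cap Q$ is generated in degrees $\geq d + 1$. By the standard lower-degree bound $\beta_{k,j}(M) = 0$ for $j < k + e$ whenever $M$ is generated in degrees $\geq e$, every multigraded shift $\mathbf{a}$ of $P \cap Q$ in homological degree $k$ satisfies $|\mathbf{a}| \geq k + d + 1$ and also $x_i \mid \mathbf{x}^{\mathbf{a}}$.

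These three facts force a strict separation of multigraded shifts: no shift of the resolution of $P \cap Q$ in homological degree $k$ coincides with a shift of $P$ in homological degree $k$ (total degrees differ by at least one), nor with a shift of $Q$ in homological degree $k$ (support in $x_i$ versus no support in $x_i$). Consequently, every entry of every comparison map $\psi_k \colon A_k \to B_k$ is a monomial whose multidegree is the difference of two distinct multidegrees with nonnegative entries, hence has strictly positive total degree, i.e.\ lies in $\mm = (x_1, \dots, x_n)$. Therefore the differentials of $C(\psi)$ all have entries in $\mm$, so $C(\psi)$ is the minimal free resolution of $I$, and the Betti splitting identity follows.

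The step I expect to be the main obstacle is establishing the strict degree lower bound $\deg \lcm(u, v) \geq d + 1$ for $u \in G(P)$, $v \in G(Q)$: once this is in hand, the rest of the argument is a careful bookkeeping of multigraded shifts. The strictness crucially uses the observation $v \nmid u$, which is exactly where the hypothesis that $I = P + Q$ is an $x_i$-partition (and in particular that $v \in G(I)$) enters the proof.
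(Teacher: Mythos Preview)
The paper does not supply a proof of this proposition; it is quoted without argument from \cite[Corollary 2.7]{FHT2009}. Your proof is correct and self-contained. You establish minimality of the mapping cone directly (condition (c) of Theorem~\ref{Thm:MainBettiSplittingFranciscoHaTuyl}) by showing that every comparison map $\psi_k$ has all entries in $\mm$, which follows from a clean separation of multigraded shifts: shifts of $P\cap Q$ in homological degree $k$ have total degree at least $k+d+1$ and are divisible by $x_i$, while shifts of $P$ in the same homological degree have total degree exactly $k+d$ (by linearity of the resolution of $P$), and shifts of $Q$ are never divisible by $x_i$. The key step, that $P\cap Q$ is generated in degrees at least $d+1$ because no $v\in G(Q)$ can divide any $u\in G(P)$ (both being elements of $G(I)$), is precisely where the $x_i$-partition hypothesis enters, and you identify this correctly.
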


\begin{Proposition}\label{Prop:IBettiSplitLinRes} \textup{(\cite[Proposition 3.1]{DB}).} Let $d$ be a positive integer, $I$ a monomial ideal with a $d$--linear resolution, $P, Q\neq 0$ monomial ideals such that $I=P+Q$, $G(I) =G(P)\cup G(Q)$ and $G(P)\cap G(Q) =\emptyset$. Then the following facts are equivalent:
	\begin{enumerate} 
		\item[\em(i)] $I=P+Q$ is a Betti splitting of $I$;
		\item[\em(ii)] $P$ and $Q$ have $d$--linear resolutions.
	\end{enumerate}
	If this is the case, then $P\cap Q$ has a $(d +1)$--linear resolution.
\end{Proposition}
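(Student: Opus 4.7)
My plan is to exploit the Mayer--Vietoris short exact sequence
$$0 \to P \cap Q \to P \oplus Q \to I \to 0$$
and its associated long exact sequence in $\Tor_{\bullet}(-,K)$, together with the observation that a $d$-linear resolution of $I$ forces every element of $G(I)$---and hence of $G(P) \cup G(Q)$---to have degree exactly $d$. For the easy implication (i) $\Rightarrow$ (ii), I read off the Betti splitting identity $\beta_{i,j}(I) = \beta_{i,j}(P) + \beta_{i,j}(Q) + \beta_{i-1,j}(P \cap Q)$ in bidegrees with $j \neq i + d$: the left-hand side vanishes by linearity of $I$, and non-negativity of the three summands on the right forces $\beta_{i,j}(P) = \beta_{i,j}(Q) = 0$ off the diagonal $j = i + d$, yielding $d$-linear resolutions for both $P$ and $Q$.

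The harder direction (ii) $\Rightarrow$ (i) proceeds through the $(d+1)$-linearity of $P \cap Q$, which also delivers the final assertion of the proposition. Every minimal generator of $P \cap Q$ is an $\lcm(u,v)$ with $u \in G(P)$ and $v \in G(Q)$; because $u \neq v$ share degree $d$, neither divides the other and so $\deg \lcm(u,v) \ge d + 1$. Hence $P \cap Q$ is generated in degrees $\ge d + 1$, and since each differential in a minimal graded free resolution lands in the graded maximal ideal, one obtains $\beta_{i,j}(P \cap Q) = 0$ for $j < i + d + 1$. On the other hand, the standard regularity estimate applied to the short exact sequence gives
$$\reg(P \cap Q) \le \max\bigl(\reg(P \oplus Q),\, \reg(I) + 1\bigr) = d + 1,$$
so $\beta_{i,j}(P \cap Q) = 0$ also for $j > i + d + 1$. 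Combining the two bounds shows that $P \cap Q$ has a $(d+1)$-linear resolution.

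With the $(d+1)$-linearity of $P \cap Q$ in hand, I close the argument by evaluating the long exact sequence of Tor in the critical degree $j = i + d$: linearity of $P \cap Q$ kills $\Tor_i(P \cap Q, K)_{i+d}$, and linearity of $P \oplus Q$ kills $\Tor_{i-1}(P \oplus Q, K)_{i+d}$, so the long exact sequence collapses to the short exact sequence
$$0 \to \Tor_i(P \oplus Q, K)_{i+d} \to \Tor_i(I, K)_{i+d} \to \Tor_{i-1}(P \cap Q, K)_{i+d} \to 0,$$
which translates exactly into $\beta_{i, i+d}(I) = \beta_{i, i+d}(P) + \beta_{i, i+d}(Q) + \beta_{i-1, i+d}(P \cap Q)$; in all other bidegrees every term of the Betti splitting identity vanishes automatically, so the formula holds throughout. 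The main technical obstacle is the $(d+1)$-linearity of $P \cap Q$, which crucially couples the degree lower bound coming from the generators with the regularity upper bound coming from the short exact sequence; the rest of the argument is routine degree bookkeeping.
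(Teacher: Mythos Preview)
Your argument is correct. The paper itself does not prove this proposition; it is quoted verbatim from \cite[Proposition 3.1]{DB} and stated without proof, so there is no in-paper proof to compare against. Your approach---reading (i) $\Rightarrow$ (ii) directly off the Betti splitting identity by non-negativity, and for (ii) $\Rightarrow$ (i) first establishing the $(d+1)$-linearity of $P\cap Q$ via the generator degree lower bound combined with the regularity upper bound $\reg(P\cap Q)\le\max(\reg(P\oplus Q),\reg(I)+1)=d+1$ from the short exact sequence, then collapsing the long exact $\Tor$ sequence in the single relevant strand $j=i+d$---is the natural and standard route to this result. One minor remark: in the direction (i) $\Rightarrow$ (ii) the same Betti splitting identity already forces $\beta_{i-1,j}(P\cap Q)=0$ for $j\ne i+d$, so the $(d+1)$-linearity of $P\cap Q$ also drops out immediately there, without invoking the regularity estimate; your proof of it under hypothesis (ii) is of course needed for the converse and for the ``if this is the case'' clause.
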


We close the subsection with the next result which shows that understanding when a monomial ideal has a Betti spitting is equivalent to understanding when the mapping cone (i)--(ii) gives a minimal free resolution.

\begin{Theorem}\label{Thm:MainBettiSplittingFranciscoHaTuyl}
	\textup{(\cite[Proposition 2.1]{FHT2009}).} Let $I,P,Q$ be monomial ideals of $S$ such that $I=P+Q$ and $G(I)$ is the disjoint union of $G(P)$ and $G(Q)$. Furthermore, consider the short exact sequence \textup{(\ref{eq:ShortExactSequenceBettiSplitting})}.

Then the following conditions are equivalent:
	\begin{enumerate}[label=\textup{(\alph*)}]
	\item $I=P+Q$ is a Betti splitting.
		\item For all $i$ and $j$, the map
		$$\Tor_i^S(K,P\cap Q)_j\rightarrow \Tor_i^S(K,P)_j \oplus \Tor_i^S(K,Q)_j$$
		in the long exact sequence in $\Tor$  induced from \textup{(\ref{eq:ShortExactSequenceBettiSplitting})} is the zero map.
		\item Applying the mapping cone to \textup{(\ref{eq:ShortExactSequenceBettiSplitting})} gives a minimal free resolution of $I$.

	\end{enumerate}
\end{Theorem}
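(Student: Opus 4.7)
The plan is to establish the three-way equivalence by first showing (a)$\,\Leftrightarrow\,$(b) via a dimension count on the long exact sequence in $\Tor$ associated with \eqref{eq:ShortExactSequenceBettiSplitting}, and then showing (b)$\,\Leftrightarrow\,$(c) by analyzing when the mapping cone construction produces a minimal resolution.

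For (a)$\,\Leftrightarrow\,$(b), I would apply $-\otimes_S K$ to the short exact sequence \eqref{eq:ShortExactSequenceBettiSplitting} to obtain a long exact sequence in $\Tor^S(K,-)$. Writing $\alpha_{i,j}$ for the induced map $\Tor_i^S(K,P\cap Q)_j \to \Tor_i^S(K,P)_j\oplus \Tor_i^S(K,Q)_j$ appearing there, the long exact sequence can be truncated in each bidegree $(i,j)$ into the short exact sequence
$$0 \to \Coker(\alpha_{i,j}) \to \Tor_i^S(K,I)_j \to \ker(\alpha_{i-1,j}) \to 0.$$
Taking $K$-dimensions and using $\beta_{i,j}(M)=\dim_K\Tor_i^S(K,M)_j$ yields
$$\beta_{i,j}(I) \;=\; \beta_{i,j}(P)+\beta_{i,j}(Q)+\beta_{i-1,j}(P\cap Q)\;-\;\rank(\alpha_{i,j})\;-\;\rank(\alpha_{i-1,j}).$$
Since the two rank terms are non-negative, the identity \eqref{eq:BettiSplittingI=P+Q} holds for every $(i,j)$ if and only if $\alpha_{i,j}=0$ for every $(i,j)$, which is condition (b).

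For (b)$\,\Leftrightarrow\,$(c), I would start from the standard fact that $C(\psi)$ is a free resolution of $I=P+Q$ (cf.\ \cite[Appendix A3.12]{Ei} as quoted above), so only minimality is at issue. Reading off the differential described in (i)--(ii), the blocks coming from $d^A$ and $d^B$ have entries in $\mm$ because $\mathbb{A}$ and $\mathbb{B}$ are minimal, while the remaining off-diagonal blocks are precisely the comparison maps $\psi_i$. Thus $C(\psi)$ is minimal if and only if $\psi_i$ has all entries in $\mm$, i.e.\ $\psi_i\otimes_S K=0$ for every $i\ge 0$. Because $\psi$ lifts $\psi_{-1}$, the map $\psi_i\otimes_S K$ agrees in each internal degree with the map $\alpha_{i,j}$ on $\Tor$ used above, so this vanishing condition is exactly (b).

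The main obstacle is the careful bookkeeping in the second step: verifying that the reduction $\psi_i\otimes_S K$ really does coincide, bidegree by bidegree, with the $\alpha_{i,j}$ coming from the long exact $\Tor$ sequence, rather than with some related but different map. Once that identification is pinned down (together with the observation that minimality of $\mathbb{A}$ and $\mathbb{B}$ forces their differentials into $\mm$), the equivalences follow directly, with the rest being straightforward linear-algebra and dimension arguments.
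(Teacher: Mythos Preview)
The paper does not supply its own proof of this theorem; it is quoted verbatim as \cite[Proposition 2.1]{FHT2009} and used as a black box, so there is no in-paper argument to compare against. Your proposal is correct and is essentially the standard proof of the Francisco--H\`a--Van Tuyl result: the dimension count on the long exact $\Tor$ sequence gives (a)$\Leftrightarrow$(b), and the block decomposition of the mapping-cone differential gives (b)$\Leftrightarrow$(c). The one point you flagged as an obstacle---that $\psi_i\otimes_S K$ coincides with the $\Tor$ map $\alpha_{i,j}$---is handled precisely by the minimality of $\mathbb{A}$ and $\mathbb{B}$: since $d^A\otimes_S K=0$ and $d^B\otimes_S K=0$, the complexes $\mathbb{A}\otimes_S K$ and $\mathbb{B}\otimes_S K$ equal their own homology, so the chain map $\psi\otimes_S K$ is literally the induced map on $\Tor$; moreover any two lifts $\psi,\psi'$ of $\psi_{-1}$ differ by a chain homotopy, and tensoring the homotopy relation with $K$ kills the $d^A,d^B$ terms, showing $\psi_i\otimes_S K$ is independent of the lift. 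With that identification made explicit, your argument is complete.
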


\subsection{A glimpse to linear quotients, linear powers and homological shift ideals.}\label{sub:2} Let $I$ be a monomial ideal of $S=K[x_1,\dots,x_n]$, $I$ has \textit{linear quotients} if for some order $u_1>\dots>u_m$ of its minimal generating set $G(I)$, all colon ideals $(u_1,\dots,u_{\ell-1}):u_{\ell}$, $\ell=2,\dots,m$, are generated by a subset of the set of variables $\{x_1,\dots,x_n\}$. In such a case $u_1>u_2>\dots>u_m$ is called an \textit{admissible order} of $I$. By \cite[Proposition 1.2.2]{JT} each colon ideal $(u_1,\dots,u_{\ell-1}):u_{\ell}$ is generated by the monomials
$$
\frac{u_j}{\gcd(u_j,u_{\ell})}=\frac{\lcm(u_j,u_{\ell})}{u_{\ell}},\ \ \ \ j=1,\dots,\ell-1.
$$
By \cite[Lemma 8.2.3]{JT}, $u_1>\!\cdots\!>u_m$ is an admissible order of $I$ if and only if for all $j<\ell$ there exist an integer $k< \ell$ and an integer $p$ such that
$$
\frac{\lcm(u_k,u_\ell)}{u_\ell}=x_p\ \ \ \text{and}\ \ \ x_p\ \ \ \text{divides}\ \ \  \frac{\lcm(u_j,u_{\ell})}{u_{\ell}}.
$$

One knows that if $I$ is generated in the same degree and it has linear quotients, then $I$ has a linear resolution. This fact is extremely useful in combinatorial commutative algebra. Indeed, if one proves that an equigenerated monomial ideal $I$ and all its powers $I^k$ ($k\ge2$) have linear quotients, then it will follow that $I$ has \textit{linear powers}, \emph{i.e.}, $I$ and all its powers have linear resolutions.

For a monomial $u=x_1^{a_1}x_2^{a_2}\cdots x_{n}^{a_n}\in S$, the integral vector ${\bf a}=(a_1,a_2,\dots,a_n)$ is called the \textit{multidegree} of $u$. We also write $u={\bf x^a}$, in particular, for ${\bf a}={\bf 0}=(0,0,\dots,0)$, ${\bf x^0}=1$; whereas $\deg(u)=a_1+a_2+\dots+a_n$ is the \textit{degree} of $u$. It is often customary to identify the multidegree ${\bf a}$ with the monomial ${\bf x^a}$. 

We quote the next definition from \cite{HMRZ021a}.
\begin{Definition}\rm Let $I\subset S$ be a monomial ideal with minimal multigraded free resolution
$$
\FF\ \ :\ \ 0 \rightarrow F_p \rightarrow F_{p-1} \rightarrow \cdots\rightarrow F_0 \rightarrow I\rightarrow 0,
$$
where $F_i = \bigoplus_{j=1}^{\beta_i(I)}S(-{\bf a}_{i,j})$. The vectors ${\bf a}_{i,j}\in\ZZ^n$, $i\ge0$, $j=1,\dots,\beta_i(I)$, are called the $i$th \textit{multigraded shifts} of $I$. The monomial ideal
	$$
	\HS_i(I)\ =\ ({\bf x}^{{\bf a}_{i,j}}\ :\ j=1,\dots,\beta_i(I))
	$$
	is called the $i$th \textit{homological shift ideal} of $I$.
\end{Definition}

Note that $\HS_0(I)=I$ and $\HS_i(I)=(0)$ for $i<0$ or $i>\pd(I)$.\medskip

The main purpose of the theory of homological shift ideals is to understand what homological and combinatorial properties are enjoyed by all $\HS_i(I)$, $i=0, \ldots, \pd(I)$. We call any such property a \textit{homological shift property} of $I$, or if the context is clear, simply \textit{homological property}. Two important homological shift properties are the following ones: $I$ has a \textit{homological linear resolution} if $\HS_i(I)$ has a linear resolution for all $i$; 
$I$ has \textit{homological linear quotients} if $\HS_i(I)$ has linear quotients for all $i$. 

Theorem \ref{Thm:MainBettiSplittingFranciscoHaTuyl}(b) implies immediately
\begin{Proposition}\label{Prop:HSBettiSplitting}
	Let $I=P+Q$ be a Betti splitting of $I\subset S$. Then 
	$$
	\HS_k(I)=\HS_{k-1}(P\cap Q)+\HS_k(P)+\HS_{k}(Q),
	$$
	for all $k\ge0$.
\end{Proposition}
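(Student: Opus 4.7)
The plan is to deduce the identity directly from part (b) of Theorem \ref{Thm:MainBettiSplittingFranciscoHaTuyl} by examining the long exact sequence in $\Tor$ induced by the short exact sequence (\ref{eq:ShortExactSequenceBettiSplitting}) in each multidegree. The key algebraic fact to be used throughout is that for any monomial ideal $M\subset S$, the multigraded Betti number $\beta_{i,\mathbf{a}}(M)=\dim_K \Tor_i^S(K,M)_{\mathbf{a}}$ controls the minimal generators of $\HS_i(M)$: a monomial $\mathbf{x}^{\mathbf{a}}$ belongs to $G(\HS_i(M))$ precisely when $\beta_{i,\mathbf{a}}(M)\neq 0$. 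Since the sum of the monomial ideals $\HS_{k}(P), \HS_{k}(Q)$ and $\HS_{k-1}(P\cap Q)$ is the monomial ideal generated by the union of their minimal generating sets, the whole argument reduces to a multidegree-wise comparison of Betti numbers.

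Applying $\Tor_*^S(K,-)$ to (\ref{eq:ShortExactSequenceBettiSplitting}) and extracting the $\mathbf{a}$-th multigraded strand produces the long exact sequence
\[
\cdots \to \Tor_i^S(K,P\cap Q)_{\mathbf{a}} \to \Tor_i^S(K,P)_{\mathbf{a}} \oplus \Tor_i^S(K,Q)_{\mathbf{a}} \to \Tor_i^S(K,I)_{\mathbf{a}} \to \Tor_{i-1}^S(K,P\cap Q)_{\mathbf{a}} \to \cdots
\]
of finite dimensional $K$-vector spaces. Since $I=P+Q$ is a Betti splitting, condition (b) of Theorem \ref{Thm:MainBettiSplittingFranciscoHaTuyl} ensures that every map $\Tor_i^S(K,P\cap Q)_{\mathbf{a}} \to \Tor_i^S(K,P)_{\mathbf{a}} \oplus \Tor_i^S(K,Q)_{\mathbf{a}}$ vanishes, so the long exact sequence breaks into short exact sequences
\[
0 \to \Tor_i^S(K,P)_{\mathbf{a}} \oplus \Tor_i^S(K,Q)_{\mathbf{a}} \to \Tor_i^S(K,I)_{\mathbf{a}} \to \Tor_{i-1}^S(K,P\cap Q)_{\mathbf{a}} \to 0,
\]
which automatically split as $K$-vector spaces. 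Consequently $\beta_{i,\mathbf{a}}(I)\neq 0$ if and only if at least one of $\beta_{i,\mathbf{a}}(P)$, $\beta_{i,\mathbf{a}}(Q)$, $\beta_{i-1,\mathbf{a}}(P\cap Q)$ is nonzero. Translating this back into monomials $\mathbf{x}^{\mathbf{a}}$, the minimal generators of $\HS_k(I)$ are exactly the union of the minimal generators of $\HS_k(P)$, $\HS_k(Q)$ and $\HS_{k-1}(P\cap Q)$, hence
\[
\HS_k(I)\ =\ \HS_k(P)+\HS_k(Q)+\HS_{k-1}(P\cap Q)
\]
for every $k\ge 1$. For $k=0$ the identity reduces to $I=P+Q$, since by convention $\HS_{-1}(P\cap Q)=(0)$.

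Because the proposition is essentially an immediate consequence of condition (b), I do not foresee any real obstacle. The only point requiring care is that (\ref{eq:ShortExactSequenceBettiSplitting}) together with all minimal free resolutions involved are multigraded of degree zero, so that the splitting in $\Tor$ really takes place in each multidegree $\mathbf{a}\in\ZZ^n$ separately; this is standard for monomial ideals and lets one pass from a numerical Betti identity to the claimed equality of ideals.
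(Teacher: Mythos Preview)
Your argument is correct and follows exactly the route the paper intends: the paper merely states that the proposition follows immediately from Theorem~\ref{Thm:MainBettiSplittingFranciscoHaTuyl}(b), and you have spelled out that implication via the multigraded long exact sequence in $\Tor$. One harmless imprecision: it is not literally true that $\mathbf{x}^{\mathbf a}\in G(\HS_i(M))$ \emph{precisely} when $\beta_{i,\mathbf a}(M)\ne 0$, since a multigraded shift $\mathbf{x}^{\mathbf a}$ may be a non-minimal generator of $\HS_i(M)$; what is true, and all your argument actually uses, is that $\{\mathbf{x}^{\mathbf a}:\beta_{i,\mathbf a}(M)\ne 0\}$ is a (possibly redundant) generating set of $\HS_i(M)$, so the equivalence $\beta_{k,\mathbf a}(I)\ne 0 \iff \beta_{k,\mathbf a}(P)+\beta_{k,\mathbf a}(Q)+\beta_{k-1,\mathbf a}(P\cap Q)\ne 0$ yields the desired equality of ideals.
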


Let us recall the technique of \textit{polarization} which is an operation that transforms a monomial ideal into a squarefree monomial ideal in a larger polynomial ring.

 Let $u=x_1^{b_1}x_2^{b_2}\cdots x_n^{b_n}\in S$ be a monomial. Then, the \textit{polarization} of $u$ is the monomial
$$
u^\wp=\prod_{i=1}^n\prod_{j=1}^{b_i}x_{i,j}=\prod_{\substack{i=1,\dots,n\\ b_i>0}}x_{i,1}x_{i,2}\cdots x_{i,b_i}
$$
in the polynomial ring $K[x_{i,j}:i=1,\dots,n,j=1,\dots,b_i]$. 

For a monomial $u=x_1^{b_1}x_2^{b_2}\cdots x_n^{b_n}$ of $S$, we define the \textit{$x_i$--degree} of $u$ to be the integer $\deg_{x_i}(u)=b_i=\max\{j\ge0:x_i^j\ \textup{divides}\ u\}$. Let $I$ be a monomial ideal of $S$ and set $a_i=\max\{\deg_{x_i}(u):u\in G(I)\}$, $i=1,\dots,n$. Let
$$
R=K[x_{i,j}:i=1,\dots,n,j=1,\dots,a_i]
$$
be the polynomial ring in the variables $x_{i,j}$, $i=1,\dots,n,j=1,\dots,a_i$. The \textit{polarization} of the monomial ideal $I$ is defined to be the squarefree monomial ideal $I^\wp$ of $R$ with minimal generating set $G(I^\wp)=\{u^\wp:u\in G(I)\}$. 

The polarization commutes with the homological shift ideals as next result illustrates. 
\begin{Lemma}\label{Lem:HSpolarization}
	\textup{(\cite[Corollary 1.8]{Sbarra2001}, \cite[Proposition 1.14]{HMRZ021a}).} Let $I\subset S$ be a monomial ideal. Then $\HS_i(I^\wp)=\HS_i(I)^\wp$ for all $i\ge0$.
\end{Lemma}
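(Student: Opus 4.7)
The plan is to exploit the fact that polarization is a multigraded deformation. Let $I^{\wp}\subset R$ and consider the sequence
$$
\theta \;=\; \{x_{i,j}-x_{i,1} : 1\le i\le n,\; 2\le j\le a_i\}.
$$
A standard computation shows that $\theta$ is a regular sequence on $R/I^{\wp}$, and that $R/(I^{\wp}+(\theta))\cong S/I$ under $x_{i,1}\leftrightarrow x_i$. Equipping $R$ with the coarsened $\ZZ^n$-multigrading $\deg x_{i,j}=e_i$, each element of $\theta$ is multihomogeneous of degree $e_i$, so the deformation is compatible with the common $\ZZ^n$-grading shared by $S$ and $R$. This is the key bridge between the two resolutions.

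The proof then splits into three steps. First, let $\mathbb{G}$ be the minimal multigraded free resolution of $I^{\wp}$ over $R$ in its full $\ZZ^{\sum a_i}$-multigrading. Viewing $\mathbb{G}$ in the coarser $\ZZ^n$-grading and tensoring with $R/(\theta)$ yields, by regularity of $\theta$ and minimality (since $\theta$ lies in the maximal ideal), a minimal multigraded free resolution of $I$ over $S$, which must coincide with $\mathbb{F}$. This produces a bijection between the $i$th multigraded shifts of $\mathbb{G}$ and those of $\mathbb{F}$, where a shift $v\in R$ of $\mathbb{G}$ maps to $v|_{x_{i,j}\mapsto x_i}\in S$. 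Second, since $I^{\wp}$ is a squarefree monomial ideal, every multigraded shift of $\mathbb{G}$ is a squarefree monomial in $R$ (a standard Hochster-type consequence of squarefreeness). Third, because polarization commutes with $\lcm$ and the multigraded shifts of a monomial ideal's minimal resolution lie in the $\lcm$-semigroup of its generators, each squarefree shift of $\mathbb{G}$ must be the polarization of the corresponding shift of $\mathbb{F}$. Reading off generators then gives $\HS_i(I^{\wp})=\HS_i(I)^{\wp}$.

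The hard part is this last step: a squarefree monomial in $R$ whose image in $S$ equals $x_1^{c_1}\cdots x_n^{c_n}$ need not be the standard polarization $\prod_i\prod_{j=1}^{c_i}x_{i,j}$, since there are $\prod_i\binom{a_i}{c_i}$ possible squarefree lifts. The compatibility with $\lcm$s, together with the explicit description $G(I^{\wp})=\{u^{\wp}:u\in G(I)\}$, is what selects the correct lift. An alternative that entirely sidesteps this subtlety is to polarize $\mathbb{F}$ directly, along the lines of Sbarra's argument: starting from the monomial matrix presentation of $\mathbb{F}$, one builds a complex $\mathbb{F}^{\wp}$ over $R$ by polarizing each monomial entry consistently with the ambient $\lcm$ associated to each basis element, and then verifies that $\mathbb{F}^{\wp}$ is acyclic, multigraded, and minimal. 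Its shifts are by construction polarizations of the shifts of $\mathbb{F}$, which is exactly the claim.
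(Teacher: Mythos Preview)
The paper does not supply its own proof of this lemma; it is quoted from the literature (Sbarra and Herzog--Moradi--Rahimbeigi--Zhu) and stated without argument. So there is nothing in the paper to compare your attempt against.

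That said, your sketch is essentially the standard route and is correct. The regularity of $\theta$ on $R/I^\wp$ is classical, and since $\theta$ consists of elements in the graded maximal ideal, $\mathbb{G}\otimes_R R/(\theta)$ is indeed the minimal $\ZZ^n$-multigraded free resolution of $I$; this gives the bijection of basis elements sending a fine multidegree $v$ to its image under $x_{i,j}\mapsto x_i$. You correctly isolate the only genuine issue, namely that an arbitrary squarefree lift of a monomial need not be its standard polarization, and your resolution via the $\lcm$-lattice is the right one: every multigraded shift of $I^\wp$ is the $\lcm$ of some subset of $G(I^\wp)$ (the minimal resolution is a direct summand of the Taylor complex), and a direct computation gives $\lcm(u_1^\wp,\dots,u_k^\wp)=(\lcm(u_1,\dots,u_k))^\wp$. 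One small point worth making explicit at the very end: to upgrade the bijection of shifts to the equality of \emph{ideals} $\HS_i(I^\wp)=\HS_i(I)^\wp$, you should note that polarization and the substitution $x_{i,j}\mapsto x_i$ both preserve divisibility among the monomials involved, so the minimal generating sets correspond. The alternative you mention (polarizing $\mathbb{F}$ directly and checking acyclicity) is precisely Sbarra's original approach.
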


\subsection{A glimpse to very well--covered graphs.}
In this article, a \textit{graph} will always mean a finite undirected graph without loops or multiple
edges. Let $G$ be a graph with \textit{vertex set} $V(G)=\{x_1,\dots,x_n\}$ and with \textit{edge set} $E(G)$. Let $x_i\in V(G)$ be a vertex of $G$. The (open) \textit{neighborhood} of $x_i$ is the set
\[N(x_i)=\{x_j\in V(G):x_ix_j\in E(G)\},\]
whereas the \textit{closed neighborhood}  of $x_i$ is the set defined as follows
\[
N[x_i]=\{x_j\in V(G):x_ix_j\in E(G)\}\cup\{x_i\}.
\]

If $W \subseteq V(G)$, we denote by $G\setminus W$ the subgraph of $G$ with the vertices of $W$ and their incident edges deleted.

A subset $W$ of $V(G)$ is called a \emph{vertex cover} if every edge of $G$ is incident with at
least one vertex in $W$. A vertex cover $W$ is called a \emph{minimal vertex cover} if there
is no proper subset of $W$ which is a vertex cover of $G$. The set of all minimal vertex cover of $G$ is denote by $\mathcal{C}(G)$.

Attached to $G$ \cite{RV} there are the following squarefree monomial ideals of the polynomial ring $S=K[x_1, \ldots, x_n]$:
\[I(G) = (x_ix_j\ :\ x_ix_j \in E(G))\]
called the \emph{edge ideal} of $G$, and 
\[I(G)^\vee = (x_{i_1}\cdots x_{i_s}\ :\ \mbox{$W = \{x_{i_1},\ldots, x_{i_s}\}$ is a minimal vertex cover of $G$})
\]
called the \emph{cover ideal} of $G$.

A graph $G$ is \emph{unmixed} or \emph{well--covered} if all the minimal vertex covers have the same cardinality. In particular, all the associated primes of $I(G)$ have the same height.  A graph $G$ is called \emph{Cohen–Macaulay} over the field $K$ if
$S/I(G)$ is a Cohen--Macaulay ring. It is clear that a Cohen--Macaulay graph is unmixed \cite[Proposition 7.2.9]{RV}.

Finally, we recall that a pairing off of all the vertices of a graph $G$ is called a \emph{perfect matching}. Thus $G$ has a perfect matching if and only if $G$ has an even number of
vertices and there is a set of independent edges covering all the vertices, where for a set of independent edges we mean a set of pairwise disjoint edges \cite{RV}.\\

The next fundamental criterion due to Eagon and Reiner will be crucial.
\begin{Criterion}\label{Crit:ER}
	\textup{(\cite{ER98}, \cite[Theorem 8.1.9]{JT}).} Let $I\subset S$ be a squarefree monomial ideal. Then $I$ is Cohen--Macaulay if and only if its Alexander dual $I^\vee$ has a linear resolution.
\end{Criterion}

Now, we analyze the class of \textit{very well--covered graphs} (see, for instance, \cite{KPFTY} and the references therein). Let $G$ be an unmixed graph without isolated vertices and let $I(G)$ be its edge ideal in $S$. In \cite[Corollary 3.4]{GV}, the authors proved the following inequality:
\[2\alt (I(G)) \ge \vert V(G)\vert.\]
\begin{Definition}\rm A graph $G$ is called \textit{very well--covered} if it is unmixed without isolated vertices and with  $2\alt(I(G)) = \vert V(G)\vert$.
\end{Definition}

By \cite[Theorem 1.2]{OF}, very well--covered graphs have always perfect matchings. Hence, for a very well--covered graph $G$  with $2n$ vertices, we may assume\medskip
\\
$(*)$ $V(G)=X \cup Y$, $X \cap Y=\emptyset$,
with $X=\{x_1,\ldots,x_n\}$ a minimal vertex cover of $G$
and $Y=\{y_1,\ldots, y_n\}$ a maximal independent set of $G$
such that  $\{x_1 y_1, \ldots, x_ny_n\} \subseteq E(G)$.\medskip

It is important to point out that when we assume the condition $(*)$ for a very well--covered
graph, we do not force any restriction on the graph. Indeed,  it is only a
relabeling of the vertices.\medskip

For the reminder of this article, set $S=K[x_1,\ldots,x_n, y_1,\ldots, y_n]$ for a field $K$. For a positive integer $n$, we denote the set $\{1,2,\dots,n\}$ by $[n]$.

\begin{Theorem}\label{Thm:CRTheight}
	\textup{(\cite[Theorem 3.6]{CRT2011}).} Let $G$ be a graph with $2n$ vertices, which are not isolated, with $\alt(I(G))=n$. We assume condition $(*)$ and also we assume that if $x_iy_j\in E(G)$ then $i\le j$. Then, the following conditions are equivalent:
	\begin{enumerate}[label=\textup{(\alph*)}]
		\item $G$ is a Cohen--Macaulay very well--covered graph.
		\item The following conditions hold:
		\begin{enumerate}[label=\textup{(\roman*)}]
			\item if $x_iy_j\in E(G)$ then $x_ix_j\notin E(G)$,
			\item if $z_ix_j,y_jx_k\in E(G)$ then $z_ix_k\in E(G)$ for any distinct $i,j,k$ and $z_i\in\{x_i,y_i\}$.
		\end{enumerate}
	\end{enumerate}
\end{Theorem}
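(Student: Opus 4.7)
By the Eagon--Reiner criterion (Criterion \ref{Crit:ER}), condition (a) is equivalent to $I(G)^\vee$ having a linear resolution; moreover, the unmixedness part of ``very well--covered'' is equivalent to $I(G)^\vee$ being generated in the single degree $n$. My strategy is to translate the combinatorial conditions (i) and (ii) into this algebraic reformulation, and to exploit the distinguished leaf produced by the ordering hypothesis.

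For (a) $\Rightarrow$ (b) I would argue by contradiction, showing that violating either (i) or (ii) produces a minimal vertex cover of cardinality different from $n$, contradicting unmixedness. If (i) fails with $x_iy_j,x_ix_j\in E(G)$ and $i<j$, then starting from the cover $X$ and replacing $x_j$ by $y_j$ leaves uncovered the edges $x_jy_\ell$ with $\ell\neq j$; iteratively adding the missing $y_\ell$'s (each of which is forced in by the ordering assumption) produces a minimal vertex cover of size strictly greater than $n$. For (ii) the argument is similar: from a triple $z_ix_j,y_jx_k\in E(G)$ with $z_ix_k\notin E(G)$, one swaps $x_k\leftrightarrow y_k$ and $x_j\leftrightarrow y_j$ and uses the absence of $z_ix_k$ to cobble together a minimal cover whose size, again by the ordering hypothesis, cannot be $n$. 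In both cases the ordering hypothesis ``$x_iy_j\in E\Rightarrow i\le j$'' is what gives enough control over which $y$--vertices must or must not be recruited.

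For (b) $\Rightarrow$ (a) I would induct on $n$, the base case $n=1$ (an edge) being clear. The ordering hypothesis forces $N_G(y_1)=\{x_1\}$, so $y_1$ is a leaf; conditions $(*)$, the ordering assumption, (i), and (ii) are all hereditary, so the restriction $G':=G\setminus N_G[x_1]$ is a very well--covered graph on $2(n-1)$ vertices satisfying the same hypotheses, hence Cohen--Macaulay by induction. To lift this to $G$, I would use that $x_1$ is a shedding vertex of $\textup{Ind}(G)$ (since $N(y_1)=\{x_1\}$, any independent set disjoint from $N_G[x_1]$ extends by $y_1$), so that
\[
\textup{lk}_{\textup{Ind}(G)}(x_1)=\textup{Ind}(G\setminus N_G[x_1]),\qquad \textup{del}_{\textup{Ind}(G)}(x_1)=\textup{Ind}(G\setminus\{x_1\}),
\]
and the link is Cohen--Macaulay by induction. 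Alternatively, and closer in spirit to the present paper, I would exhibit an $x_1$--splitting $I(G)^\vee=P+Q$ (with $P$ generated by the monomials divisible by $x_1$), verify that $P$ and $Q$ have linear resolutions by translating them, via the perfect matching, to cover ideals of smaller graphs to which induction applies, and then invoke Proposition \ref{Prop:IBettiSplitLinRes} to conclude the linearity of $I(G)^\vee$.

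The main obstacle I expect is the combinatorial bookkeeping in direction (a) $\Rightarrow$ (b): turning the hypothetical failure of (i) or (ii) into a concrete minimal vertex cover of wrong size is delicate and leans essentially on the ordering assumption. The other subtle point is ensuring, in the inductive step, that condition (ii) is precisely what keeps the ``$Q$''-piece of the $x_1$--splitting (equivalently, $\textup{del}_{\textup{Ind}(G)}(x_1)$) Cohen--Macaulay; it is here that (ii), rather than (i) alone, genuinely enters the argument.
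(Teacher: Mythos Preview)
The paper does not prove this theorem at all: Theorem~\ref{Thm:CRTheight} is quoted verbatim from \cite[Theorem~3.6]{CRT2011} and used as a black box (it is immediately reformulated as Characterization~\ref{char:veryWellCGCM} and then applied throughout Sections~\ref{sec:2}--\ref{sec:4}). So there is no ``paper's own proof'' to compare your attempt against; any comparison would have to be with the original argument in \cite{CRT2011}, which is outside the present paper.

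That said, a remark on your sketch. In the direction (a) $\Rightarrow$ (b) you try to derive a contradiction to \emph{unmixedness} from a failure of (i) or (ii). This is suspicious: the paper exhibits (Example~\ref{expl:charc}) a very well--covered, hence unmixed, graph that is \emph{not} Cohen--Macaulay, so unmixedness by itself cannot force (i) and (ii). Your escape hatch is the standing ordering hypothesis ``$x_iy_j\in E\Rightarrow i\le j$'', and indeed that non--CM example admits no labeling satisfying it; but your sketch does not explain how the ordering hypothesis actually blocks the construction of a bad minimal cover, and the step ``iteratively adding the missing $y_\ell$'s \dots\ produces a \emph{minimal} vertex cover of size strictly greater than $n$'' is exactly where minimality can fail. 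In short, (a) $\Rightarrow$ (b) needs the full Cohen--Macaulay hypothesis, not just unmixedness, and your outline does not make visible where Cohen--Macaulayness (beyond unmixedness) is used. Your inductive plan for (b) $\Rightarrow$ (a) via the leaf $y_1$ and a shedding/splitting argument is closer in spirit to what the present paper does later (Proposition~\ref{Prop:BettiSplitVWCG}), but note that there the leaf is $y_n$, not $y_1$, and the role of condition (ii) is to control the intersection $J_1\cap J_2$ (cf.\ Lemma~\ref{Lem:J12intersect}), which is the analogue of your ``$Q$--piece'' concern.
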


For our convenience, we reformulate Theorem \ref{Thm:CRTheight}, as follows.

\begin{Characterization}\label{char:veryWellCGCM}
	\textup{(\cite{CRT2011}, \cite[Lemma 3.1]{MMCRTY2011}).} Let $G$ be a very well--covered graph with $2n$ vertices. Then, the following conditions are equivalent.
	\begin{enumerate}[label=\textup{(\alph*)}]
		\item $G$ is Cohen--Macaulay.
		\item There exists a relabeling of $V(G)=\{x_1,\dots,x_n,y_1,\dots,y_n\}$ such that
		\begin{enumerate}[label=\textup{(\roman*)}]
			\item $X=\{x_1,\dots,x_n\}$ is a minimal vertex cover of $G$ and $Y=\{y_1,\dots,y_n\}$ is a maximal independent set of $G$,
			\item $x_iy_i\in E(G)$ for all $i\in[n]$,
			\item if $x_iy_j\in E(G)$ then $i\le j$,
			\item if $x_iy_j\in E(G)$ then $x_ix_j\notin E(G)$,
			\item if $z_ix_j,y_jx_k\in E(G)$ then $z_ix_k\in E(G)$ for any distinct $i,j,k$ and $z_i\in\{x_i,y_i\}$.
		\end{enumerate}
	\end{enumerate}
\end{Characterization}

The next example illustrates the previous characterization.

\begin{Example} \label{expl:charc} \em
The following graph $G$ is an example of a Cohen--Macaulay very well--covered  graph with $8$ vertices.

\begin{picture}(90,150)(-150,-60)
\put(-10,60){\circle*{4}}
\put(40,60){\circle*{4}}
\put(-20,65){\textit{$y_1$}}
\put(30,65){\textit{$y_2$}}
\put(-10,10){\circle*{4}}
\put(40,10){\circle*{4}}
\put(-20,0){\textit{$x_1$}}
\put(33,0){\textit{$x_2$}}
\put(90,10){\circle*{4}}
\put(92,5){\textit{$x_3$}}
\put(90,60){\circle*{4}}
\put(90,65){\textit{$y_3$}}
\put(-10,60){\line(0,-1){50}}
\put(40,60){\line(0,-1){50}}
\put(90,60){\line(0,-1){50}}
\put(90,10){\line(1,1){50}}
\put(140,0){\textit{$x_4$}}
\put(140,10){\circle*{4}}
\put(140,65){\textit{$y_4$}}
\put(140,60){\circle*{4}}
\put(140,60){\line(0,-1){50}}
\qbezier(-10,10)(25,0)(40,10)
\qbezier(-10,10)(45,-20)(90,10)
\qbezier(-10,10)(45,-40)(140,10)
\qbezier(40,10)(65,0)(90,10)
\qbezier(40,10)(55,-12)(140,10)
\put(60,-30){\textit{$G$}}
\end{picture}\vspace*{-0.9cm}

Indeed, for $S = K[x_1, x_2, x_3, x_4, y_1, y_2, y_3, y_4]$,  $\dim S/I(G) = 4 = \depth S/I(G)$ and moreover, its minimal vertex covers are: $\{x_1, x_2, x_3,x_4\}$, $\{x_1, y_2, x_3, x_4\}$, $\{y_1, x_2, x_3, x_4\}$, $\{x_1, x_2, x_3,y_4\}$, $\{x_1, x_2, y_3, y_4\}$.

The next graph $G$ is an example of a not Cohen--Macaulay very well--covered graph with $8$ vertices.\\

\begin{picture}(70,50)(-150,20)
\centering
\put(-10,60){\circle*{4}}
\put(40,60){\circle*{4}}
\put(-20,65){\textit{$y_1$}}
\put(30,65){\textit{$y_2$}}
\put(-10,10){\circle*{4}}
\put(40,10){\circle*{4}}
\put(-20,0){\textit{$x_1$}}
\put(33,0){\textit{$x_2$}}
\put(90,10){\circle*{4}}
\put(92,5){\textit{$x_3$}}
\put(90,60){\circle*{4}}
\put(90,65){\textit{$y_3$}}
\put(-10,60){\line(0,-1){50}}
\put(40,60){\line(0,-1){50}}
\put(90,60){\line(0,-1){50}}
\put(90,10){\line(1,1){50}}
\put(140,0){\textit{$x_4$}}
\put(140,10){\circle*{4}}
\put(140,65){\textit{$y_4$}}
\put(140,60){\circle*{4}}
\put(140,60){\line(0,-1){50}}
\put(90,60){\line(1,-1){50}}
\qbezier(-10,10)(25,0)(40,10)
\qbezier(-10,10)(45,-20)(90,10)
\qbezier(-10,10)(45,-40)(140,10)
\qbezier(40,10)(65,0)(90,10)
\qbezier(40,10)(55,-10)(140,10)
\put(60,-30){$G$}
\end{picture}
\vspace{2cm}

In fact, $G$ is very well--covered. Its minimal vertex covers are the sets $\{x_1, x_2, x_3,x_4\}$, $\{x_1, y_2, x_3, x_4\}$, $\{y_1, x_2, x_3, x_4\}$, $\{x_1, x_2, y_3,y_4\}$. But $G$ is not Cohen--Macaulay. Note that $\dim S/I(G) = 4 \neq \depth\, S/I(G)=3$, with $S = K[x_1, x_2, x_3, x_4, y_1, y_2, y_3, y_4]$.
\end{Example}

\section{Betti splittings of cover ideals of very well--covered graphs} \label{sec:2}
In this section we analyze the Betti splittings of the cover ideals of the class of Cohen--Macaulay very well--covered graphs. 

Our first result shows that if we remove some pairs of vertices of a Cohen--Macaulay very well--covered graph in a suitable way, then we obtain ``smaller graphs'' which are again Cohen--Macaulay very well--covered graphs.

\begin{Proposition}\label{Prop:GremoveVerticesVeryWC}
	Let $G$ be a Cohen--Macaulay very well--covered graph with $2n$ vertices and assume the condition $(*)$. Then $G\setminus\{x_i,y_i:i\in A\}$ is a Cohen--Macaulay very well--covered graph, for any subset $A\subseteq[n]$.
\end{Proposition}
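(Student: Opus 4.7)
The strategy is to invoke Theorem~\ref{Thm:CRTheight} applied to the induced subgraph $G' := G\setminus\{x_i,y_i:i\in A\}$, since every hypothesis and structural condition appearing there is purely local and should be stable under removing full pairs $\{x_i,y_i\}$ from the labeling of $G$.

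More concretely, set $A' = [n]\setminus A$ and let $X' = \{x_j : j \in A'\}$, $Y' = \{y_j : j \in A'\}$, so that $V(G') = X'\cup Y'$ has $2|A'|$ elements. The first step is to verify the hypotheses of Theorem~\ref{Thm:CRTheight} for $G'$ with $|A'|$ in place of $n$: no vertex of $G'$ is isolated, because condition (ii) of Characterization~\ref{char:veryWellCGCM} applied to $G$ gives $x_jy_j\in E(G')$ for every $j\in A'$; $\alt(I(G')) = |A'|$, because $\{x_jy_j:j\in A'\}$ is a perfect matching of $G'$ (forcing $\alt(I(G'))\ge|A'|$), while $X' = X\cap V(G')$ is a vertex cover of $G'$ of size $|A'|$ (every edge of $G'$ has an endpoint in $X$ since $Y'\subseteq Y$ is independent in $G$); condition $(*)$ holds, because $X'$ is moreover a minimal vertex cover (deleting any $x_j$ uncovers $x_jy_j$) and $Y'$ is a maximal independent set (adding any $x_j$ creates the edge $x_jy_j$); finally, the ordering assumption $x_iy_j\in E(G')\Rightarrow i\le j$ is inherited from $G$.

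Next, the structural conditions (i), (ii) of Theorem~\ref{Thm:CRTheight}(b) must be checked for $G'$. They are inherited from $G$ by direct restriction: for (i), any edge $x_iy_j\in E(G')$ also belongs to $G$, so $x_ix_j\notin E(G)$ and hence $x_ix_j\notin E(G')$; for (ii), if $z_ix_j, y_jx_k\in E(G')$, then necessarily $i,j,k\in A'$, and condition (ii) for $G$ gives $z_ix_k\in E(G)$, which since $i,k\in A'$ lies in $E(G')$ as well. Theorem~\ref{Thm:CRTheight} then yields that $G'$ is Cohen--Macaulay very well-covered. I do not anticipate any serious obstacle; the whole argument reduces to the observation that the Crupi--Rinaldo--Terai conditions restrict correctly along deletions of full index pairs $\{x_i,y_i\}$, so that the quantifiers defining these conditions behave compatibly with passing to the induced subgraph on $\{x_j,y_j:j\in A'\}$.
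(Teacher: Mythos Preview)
Your proposal is correct and follows essentially the same route as the paper's proof: both verify that the hypotheses of Theorem~\ref{Thm:CRTheight} (no isolated vertices, correct height, condition $(*)$, and the ordering hypothesis) hold for the induced subgraph, and then observe that conditions (i)--(ii) of Theorem~\ref{Thm:CRTheight}(b) restrict from $G$ to $G'$. The only small point you glossed over is why the ordering condition $x_iy_j\in E(G)\Rightarrow i\le j$ may be assumed for $G$ in the first place (the statement only asserts condition $(*)$); the paper handles this by citing \cite[Lemma 3.5]{CRT2011}, and you should do the same or invoke Characterization~\ref{char:veryWellCGCM}.
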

\begin{proof} Firstly, note that since $G$ is a Cohen--Macaulay very well--covered graph with $2n$ vertices and condition $(*)$ holds, then $\alt(I(G))=n$. Moreover, from \cite[Lemma 3.5]{CRT2011}, we may assume that if $x_iy_j\in E(G)$ then $i\le j$. 
	
	If $A=\emptyset$, there is nothing to prove. Now, let $A\neq \emptyset$ and set $A=\{i_1,i_2,\dots,i_t\}$. We show that  
	 $$
	 G_0\ =\ G\setminus\{x_i,y_i:i\in A\}\ =\ G\setminus\{x_{i_1},y_{i_1},x_{i_2},y_{i_2},\dots,x_{i_t},y_{i_t}\}
	 $$
is a Cohen--Macaulay very well--covered graph. Note that $G_0$ has $2(n-t)$ vertices that are not isolated. Moreover, $G_0$ satisfies condition $(*)$ for $X_0=X\setminus\{x_{i_1},\dots,x_{i_t}\}$ and $Y_0=Y\setminus\{y_{i_1},\dots,y_{i_t}\}$, and $I(G_0)$ has height $n-t$, since $X_0$ is a minimal vertex cover of $G_0$. It is clear that $G_0$ satisfies the conditions (i)--(ii) of Theorem \ref{Thm:CRTheight}(b), because $G$ satisfies such conditions. By Theorem \ref{Thm:CRTheight}, we get that $G_0$ is also a Cohen--Macaulay very well--covered graph.
\end{proof}

Let $F\subseteq[n]$ be a non empty set, we set ${\bf x}_F=\prod_{i\in F}x_i$, ${\bf y}_F=\prod_{i\in F}y_i$. Otherwise, we set ${\bf x}_{\emptyset}={\bf y}_{\emptyset}=1$. For a monomial $u\in S= K[x_1,\dots,x_n,y_1,\dots,y_n]$, we define \textit{support} of $u$ the set 
$$
\supp(u)=\{x_i:x_i\ \textup{divides}\ u\}\cup\{y_j:y_j\ \textup{divides}\ u\}.
$$
\vspace{0,1cm}

From now, when we tell about a Cohen--Macaulay very well--covered graph $G$ with $2n$ vertices, we tacitly assume that there exists a relabeling of the set of vertices $V(G)=\{x_1,\dots,x_n,y_1,\dots,y_n\}$  which satisfy the conditions \textup{(i)--(v)} of Characterization \ref{char:veryWellCGCM}. 

\begin{Lemma}\label{Lem:minimalGeneratorsI(G)^vee}
	Let $G$ be a Cohen--Macaulay very well--covered graph with $2n$ vertices.  For each $u\in G(I(G)^\vee)$ there exists a unique subset $F$ of $[n]$ such that $u={\bf x}_F{\bf y}_{[n]\setminus F}$.
\end{Lemma}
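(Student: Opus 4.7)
The plan is to unpack what a minimal generator of $I(G)^\vee$ is and then use the perfect matching structure guaranteed by Characterization \ref{char:veryWellCGCM} to force the claimed shape.

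First, I would recall that by definition of the cover ideal, the minimal generators of $I(G)^\vee$ are in bijection with the minimal vertex covers of $G$: each $u \in G(I(G)^\vee)$ has the form $u = \prod_{x \in C} x$ for a unique $C \in \mathcal{C}(G)$. Since $G$ is very well--covered with $|V(G)| = 2n$, every minimal vertex cover has cardinality $\alt(I(G)) = n$, so $|C| = n$.

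Next, condition (ii) of Characterization \ref{char:veryWellCGCM} ensures $x_iy_i \in E(G)$ for every $i \in [n]$. Since $C$ is a vertex cover, it must intersect each such edge, giving $C \cap \{x_i,y_i\} \neq \emptyset$ for all $i \in [n]$. The $n$ pairs $\{x_i,y_i\}$ are pairwise disjoint, and $|C| = n$, so a counting (pigeonhole) argument forces $|C \cap \{x_i,y_i\}| = 1$ for each $i$.

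Finally, I would set $F = \{i \in [n] : x_i \in C\}$; then $[n] \setminus F = \{i \in [n] : y_i \in C\}$ by the previous step, so $u = \mathbf{x}_F \mathbf{y}_{[n]\setminus F}$. Uniqueness of $F$ is immediate from the fact that the variables $x_1,\dots,x_n,y_1,\dots,y_n$ are algebraically independent, so the support of $u$ determines $F$. There is really no obstacle here: the statement is essentially a direct consequence of the existence of the perfect matching $\{x_iy_i : i \in [n]\}$ combined with $|C| = n$, both of which are built into the setup we have already established.
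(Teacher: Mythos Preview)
Your proof is correct but takes a different route from the paper's. The paper argues by induction on $n$: it removes the pair $\{x_n,y_n\}$, invokes Proposition~\ref{Prop:GremoveVerticesVeryWC} to conclude that $G_1=G\setminus\{x_n,y_n\}$ is again a Cohen--Macaulay very well--covered graph, checks that $C\setminus\{z_n\}$ is a minimal vertex cover of $G_1$, and then applies the inductive hypothesis. Your argument is a direct pigeonhole count: the $n$ disjoint edges $x_iy_i$ force $|C\cap\{x_i,y_i\}|\ge 1$ for each $i$, and $|C|=n$ forces equality everywhere. This is shorter and avoids both the induction and the appeal to Proposition~\ref{Prop:GremoveVerticesVeryWC}; it also makes transparent that the Cohen--Macaulay hypothesis is not actually used here---only the perfect matching from condition~$(*)$ and the unmixedness $|C|=n$ are needed. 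The paper's inductive approach, on the other hand, fits the overall strategy of the section, where the same peeling-off-a-pair-of-vertices idea drives the Betti splitting in Proposition~\ref{Prop:BettiSplitVWCG}.
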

\begin{proof}
	Let $u\in G(I(G)^\vee)$. By definition, $u$ is a squarefree monomial whose support $C=\supp(u)$ is a minimal vertex cover of $G$. Since $G$ is very well--covered, $u$ has degree $|V(G)|/2=n$. By Characterization \ref{char:veryWellCGCM}(b)(ii), $x_ny_n\in E(G)$. Hence $z_n\in C$ with $z_n\in\{y_n,x_n\}$. Note that $C_1=C\setminus\{z_n\}$ is a vertex cover of $G_1=G\setminus\{y_n,x_n\}$. By Proposition \ref{Prop:GremoveVerticesVeryWC}, $G_1$ is again Cohen--Macaulay very well--covered. Since $|C_1|=|C|-1=|V(G)|/2-1=|V(G_1)|/2$, we have that $C_1$ is a minimal vertex cover of $G_1$. Thus the monomial $u_1$ whose support is $C_1$ is a minimal generator of $I(G_1)^\vee$. By induction, $u_1={\bf x}_{F_1}{\bf y}_{[n-1]\setminus F_1}$ for a unique subset $F_1$ of $[n-1]$. If $z_n=x_n$, let $F=F_1\cup\{n\}$. Otherwise, if $z_n=y_n$, let $F=F_1$. In both cases, $u=z_nu_1={\bf x}_F{\bf y}_{[n]\setminus F}$, as required.
\end{proof}

Note that if $G$ is a Cohen--Macaulay very well--covered graph with $2n$ vertices, then 
\[
N[x_n]=\{x_{i_1},x_{i_2},\dots,x_{i_t},x_{n}, y_n\}, \,\,\, \mbox{with $i_r < n$, $r\in [t]$},
\]
and
\[N[y_n]=\{x_{j_1},x_{j_2},\dots,x_{j_p},x_n, y_n\}, \,\, \mbox{with $j_q < n$, $q\in [p]$}.
\]

Moreover, from Characterization \ref{char:veryWellCGCM}, $i_r\neq j_q$, for all $r\in [t]$ and $q\in [p]$. We will consider again such sets in the next section (see, Setup \ref{Setup:ResVeryWellCG} and Lemma \ref{Lem:N(xn)capN(y_n)}).

\begin{Proposition}\label{Prop:BettiSplitVWCG}
	Let $G$ be a Cohen--Macaulay very well--covered graph with $2n$ vertices. Let $N[x_n]=\{x_{i_1},x_{i_2},\dots,x_{i_t},x_{n}, y_n\}$, $N[y_n]=\{x_{j_1},x_{j_2},\dots,x_{j_p}, x_n, y_n\}$ and define
	\begin{align*}
	G_1\ &=\ G\setminus\{x_{i_1},y_{i_1},x_{i_2},y_{i_2},\dots,x_{i_t},y_{i_t},x_{n},y_{n}\},\\ G_2\ &=\ G\setminus\{x_{j_1},y_{j_1},\!x_{j_2},\!y_{j_2},\dots,x_{j_p},y_{j_p},x_{n},y_{n}\}.
	\end{align*}
	Then
	$$
	I(G)^\vee\ =\ x_{i_1}x_{i_2}\cdots x_{i_t}\cdot y_n I(G_1)^\vee + x_{j_1}x_{j_2}\cdots x_{j_p}\cdot x_n I(G_2)^\vee
	$$
	 is a Betti splitting.
\end{Proposition}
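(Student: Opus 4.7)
The plan is to show that the stated decomposition coincides with the $y_n$-partition of $I(G)^\vee$, so that the Betti splitting will follow from Proposition \ref{Prop:BettiSplitxi}. Set $P = x_{i_1}\cdots x_{i_t} y_n\, I(G_1)^\vee$ and $Q = x_{j_1}\cdots x_{j_p} x_n\, I(G_2)^\vee$. By Lemma \ref{Lem:minimalGeneratorsI(G)^vee}, every minimal generator $u$ of $I(G)^\vee$ has the form $\mathbf{x}_F\, \mathbf{y}_{[n]\setminus F}$, so $u$ is divisible by exactly one of $x_n, y_n$. Let $P'$ (resp.\ $Q'$) denote the monomial ideal generated by those $u \in G(I(G)^\vee)$ divisible by $y_n$ (resp.\ $x_n$). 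The main technical task is the identification $P' = P$ and $Q' = Q$.

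For $G(P') \subseteq G(P)$: if $y_n \mid u$, then $x_n \notin \supp(u)$, and each edge $x_n x_{i_r}$ forces $x_{i_r} \in \supp(u)$ for all $r \in [t]$. Writing $u = y_n\, x_{i_1}\cdots x_{i_t}\, w$, the factor $w$ is squarefree and supported in $V(G_1)$, because $y_{i_r} \notin \supp(u)$ by the form of $u$ given in Lemma \ref{Lem:minimalGeneratorsI(G)^vee}. Since $E(G_1) \subseteq E(G)$, $\supp(w)$ is a vertex cover of $G_1$; by Proposition \ref{Prop:GremoveVerticesVeryWC} the graph $G_1$ is very well--covered with $\alt(I(G_1)) = n - 1 - t = \deg(w)$, so $\supp(w)$ is a minimal cover, giving $w \in G(I(G_1)^\vee)$ and hence $u \in G(P)$.

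The converse $G(P) \subseteq G(P')$ is the delicate step. Given $w \in G(I(G_1)^\vee)$, the monomial $u = y_n\, x_{i_1}\cdots x_{i_t}\, w$ is squarefree of degree $n = \alt(I(G))$, so it suffices to show $\supp(u)$ covers $E(G)$. Edges of $G_1$ are covered by $w$; edges incident to $x_n$ or $y_n$ are covered by $\{y_n, x_{i_1},\ldots, x_{i_t}\}$; edges incident to any $x_{i_r}$ are covered by $x_{i_r}$. The only remaining case is an edge $y_{i_r} x_k$ with $k \neq i_r$. Condition (iv) of Characterization \ref{char:veryWellCGCM}, applied to $x_n x_{i_r} \in E(G)$, excludes $y_{i_r} x_n \in E(G)$, so $k \neq n$. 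Applying condition (v) to the distinct indices $n, i_r, k$ and the edges $x_n x_{i_r}, y_{i_r} x_k \in E(G)$ then gives $x_n x_k \in E(G)$, which forces $k \in \{i_1, \ldots, i_t\}$, and the edge is covered by $x_k \in \supp(u)$. Thus $u$ is a minimal vertex cover and $u \in G(P')$. A symmetric argument, with the roles of $x_n$ and $y_n$ exchanged, gives $Q' = Q$.

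Consequently $G(I(G)^\vee) = G(P) \sqcup G(Q)$ is a $y_n$-partition. Since $G_1$ is Cohen--Macaulay by Proposition \ref{Prop:GremoveVerticesVeryWC}, Criterion \ref{Crit:ER} implies that $I(G_1)^\vee$ has a linear resolution; multiplying by the squarefree monomial $y_n x_{i_1}\cdots x_{i_t}$ in variables disjoint from those of $I(G_1)^\vee$ preserves linearity, so $P$ itself has a linear resolution. Proposition \ref{Prop:BettiSplitxi} then yields that $I(G)^\vee = P + Q$ is a Betti splitting. The main obstacle in this plan is the second inclusion of $G(P) = G(P')$, where the structural conditions (iv)--(v) of Characterization \ref{char:veryWellCGCM} are essential for ruling out uncovered edges incident to the $y_{i_r}$'s.
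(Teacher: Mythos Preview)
Your proof is correct and follows essentially the same approach as the paper's: both establish that the decomposition is the $y_n$-partition of $I(G)^\vee$, verify the linearity of $P$ via Proposition~\ref{Prop:GremoveVerticesVeryWC} and Criterion~\ref{Crit:ER}, and invoke Proposition~\ref{Prop:BettiSplitxi}. Your treatment of the inclusion $G(P)\subseteq G(P')$ is in fact slightly more careful than the paper's, since you explicitly check (via condition (iv)) that $k\ne n$ before applying condition (v), ensuring the required distinctness of indices.
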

\begin{proof}
	Let $J_1=x_{i_1}x_{i_2}\cdots x_{i_t}\cdot y_n I(G_1)^\vee$ and $J_2=x_{j_1}x_{j_2}\cdots x_{j_t}\cdot x_n I(G_2)^\vee$. Note that $y_n$ does not divide any minimal monomial generator of $J_2$. Thus $G(J_1)\cap G(J_2)=\emptyset$. We claim that $J_1$ and $J_2$ have $n$--linear resolutions. Indeed, by Proposition \ref{Prop:GremoveVerticesVeryWC}, $G_1$ and $G_2$ are again Cohen--Macaulay very well--covered graphs and our claim follows from Criterion \ref{Crit:ER}. By virtue of Proposition \ref{Prop:BettiSplitxi}, to prove that $I(G)^\vee=J_1+J_2$ is a Betti splitting, it is enough to show that $I(G)^\vee=J_1+J_2$ is a $y_n$--partition of $I(G)^\vee$.\\
	Indeed, let $u$ be a minimal generator of $I(G)^\vee$. Then $C=\supp(u)$ is a minimal vertex cover of $G$. By Lemma \ref{Lem:minimalGeneratorsI(G)^vee}, $u={\bf x}_F{\bf y}_{[n]\setminus F}$ for some $F\subseteq[n]$. Thus, either $y_n\in C$ or $x_n\in C$. We distinguish two cases.
	\medskip\\
	\textsc{Case 1.} Suppose $y_n\in C$. Since $x_n\notin C$, but $x_{i_1}x_n,\dots,x_{i_t}x_n\in E(G)$ and $C$ is a vertex cover, then we obtain that $x_{i_1},\dots,x_{i_t}\in C$. Hence $x_{i_1}x_{i_2}\cdots x_{i_t}\cdot y_n$ divides $u$. Note that the support of $v=u/(x_{i_1}x_{i_2}\cdots x_{i_t}\cdot y_n)$ is a vertex cover of $G_1$. Furthermore, $\supp(v)$ is a minimal vertex cover, since $|\supp(v)|=n-(t+1)=|V(G_1)|/2$ and $G_1$ is a very well--covered graph (Proposition \ref{Prop:GremoveVerticesVeryWC}). Hence $v\in I(G_1)^\vee$ and $u\in J_1$ in such a case.
	\smallskip\\
	\textsc{Case 2.} Suppose $x_n\in C$. Since $y_n\notin C$, but $x_{j_1}y_n,\dots,x_{j_p}y_n\in E(G)$ and $C$ is a vertex cover, then we obtain that $x_{j_1},\dots,x_{j_p}\in C$. Hence $x_{j_1}x_{j_2}\cdots x_{j_p}\cdot x_n$ divides $u$. Note that the support of $w=u/(x_{j_1}x_{j_2}\cdots x_{j_p}\cdot x_n)$ is a vertex cover of $G_2$. Furthermore, $\supp(w)$ is a minimal vertex cover, since $|\supp(w)|=n-(p+1)=|V(G_2)|/2$ and $G_2$ is a very well--covered graph (Proposition \ref{Prop:GremoveVerticesVeryWC}). Hence, $w\in I(G_2)^\vee$ and $u\in J_2$.
	
	Hence, we have shown that $G(I(G)^\vee)$ is contained in $G(J_1)\cup G(J_2)$.\medskip
	
	For the opposite inclusion, let $v\in G(I(G_1)^\vee)$, then $\supp(v)$ is a minimal vertex cover of $G_1$. We claim that $C=\supp(x_{i_1}\cdots x_{i_t}\cdot y_nv)$ is a minimal vertex cover of $G$. Indeed, all edges of $G$ incident with a vertex belonging to $\{x_{i_1},\dots,x_{i_t},y_n\}$ 
	are incident with a vertex of 
	$C$. Since $N[x_n]\setminus\{x_n,y_n\}=\{x_{i_1},\dots,x_{i_t}\}\subset C$, then each edge which is incident with $x_n$ is also incident with a vertex of 
	$C$. Finally, let $e$ be an edge incident with $y_{i_k}$ for some $k\in[t]$. Then $e=x_jy_{i_k}\in E(G)$. Since $x_jy_{i_k}$ and $x_{i_k}x_n$ are both edges of $G$, by Characterization \ref{char:veryWellCGCM}(v), $x_jx_n\in E(G)$, too. It follows that $x_j\in N[x_n]\setminus\{x_n,y_n\}$ and $\{x_j,y_{i_k}\}\cap C\ne\emptyset$. Finally, $C$ is a minimal vertex cover of $G$. This shows that $G(J_1)\subseteq G(I(G)^\vee)$. Similarly, one proves that $G(J_2)\subseteq G(I(G)^\vee)$ by exploiting again condition (v) of Characterization \ref{char:veryWellCGCM}. The result follows.
\end{proof}
\begin{Example} \label{expl:main}
	\rm Let $S=K[x_1, \ldots, x_6, y_1, \ldots, y_6]$. Consider the graph $G$ depicted below\vspace*{-0,4cm}
	
	\begin{picture}(90,150)(-90,-50)
\put(-10,60){\circle*{4}}
\put(40,60){\circle*{4}}
\put(-20,65){\textit{$y_1$}}
\put(30,65){\textit{$y_2$}}
\put(-10,10){\circle*{4}}
\put(40,10){\circle*{4}}
\put(-20,0){\textit{$x_1$}}
\put(33,0){\textit{$x_2$}}
\put(90,10){\circle*{4}}
\put(92,5){\textit{$x_3$}}
\put(90,60){\circle*{4}}
\put(90,65){\textit{$y_3$}} 
\put(-10,60){\line(0,-1){50}}
\put(40,60){\line(0,-1){50}}
\put(90,60){\line(0,-1){50}} 
\put(90,10){\line(1,1){50}}
\put(144,7){\textit{$x_4$}}
\put(140,10){\circle*{4}}
\put(140,65){\textit{$y_4$}}
\put(140,60){\circle*{4}}
\put(193,9){\textit{$x_5$}} 
\put(190,10){\circle*{4}}
\put(190,65){\textit{$y_5$}}
\put(190,60){\circle*{4}} 
\put(190,60){\line(0,-1){50}}
\put(240,0){\textit{$x_6$}} 
\put(240,10){\circle*{4}}
\put(240,65){\textit{$y_6$}}
\put(240,60){\circle*{4}} 
\put(240,60){\line(0,-1){50}}
\put(140,60){\line(0,-1){50}}
\put(190,60){\line(-2,-1){100}} 
\put(190,60){\line(-1,-1){50}} 
\put(240,60){\line(-2,-1){100}} 
\put(240,60){\line(-3,-1){150}} 
\qbezier(-10,10)(25,0)(40,10)
\qbezier(-10,10)(45,-20)(90,10)
\qbezier(-10,10)(45,-40)(140,10) 
\qbezier(-10,10)(45,-40)(190,10) 
\qbezier(-10,10)(45,-40)(240,10) 
\qbezier(40,10)(65,0)(90,10)
\qbezier(40,10)(55,-12)(140,10) 
\qbezier(40,10)(55,-12)(190,10) 
\qbezier(40,10)(55,-12)(240,10) 
\put(113,-30){\textit{$G$}}
\end{picture}\vspace*{-0,3cm}

By Characterization \ref{char:veryWellCGCM}, one verifies that $G$ is a Cohen--Macaulay very well--covered graph with $12$ vertices. We have 
\begin{align*}
I(G)\ &\ =\  (x_1y_1, x_2y_2, x_3y_3, x_4y_4, x_5y_5, x_6y_6, x_1x_2, x_1x_3, x_1x_4, x_1x_5,\\
&\phantom{\ =(..}x_1x_6, x_2x_3, x_2x_4, x_2x_5, x_2x_6, x_3y_4,x_3y_5,x_3y_6, x_4y_5, x_4y_6),\\[4pt]
I(G)^\vee\ &\ =\ (x_1x_2x_3x_4x_5x_6, y_1x_2x_3x_4x_5x_6, x_1y_2x_3x_4x_5x_6, x_1x_2x_3x_4y_5x_6,\\
&\phantom{\ =(..}x_1x_2x_3x_4x_5y_6, x_1x_2x_3x_4y_5y_6, x_1x_2x_3y_4y_5y_6, x_1x_2y_3y_4y_5y_6).
\end{align*}

Furthermore, $N[x_6]=\{x_1, x_2, x_6, y_6\}$, $N[y_6]=\{x_3, x_4, x_6, y_6\}$ and consequentaly

\[G_1 = G \setminus \{x_1, y_1, x_2, y_2, x_6, y_6\}, \,\, G_2 = G\setminus\{x_3, y_3, x_4, y_4, x_6, y_6\},\]
\emph{i.e.},

\begin{picture}(100,50)(-40,20)
\put(-10,60){\circle*{4}}
\put(40,60){\circle*{4}}
\put(-20,65){\textit{$y_3$}}
\put(30,65){\textit{$y_4$}}
\put(-10,10){\circle*{4}}
\put(40,10){\circle*{4}}
\put(-20,0){\textit{$x_3$}}
\put(30,0){\textit{$x_4$}}
\put(90,10){\circle*{4}}
\put(90,0){\textit{$x_5$}}
\put(90,60){\circle*{4}} 
\put(90,65){\textit{$y_5$}}
\put(-10,60){\line(0,-1){50}} 
\put(40,60){\line(0,-1){50}}
\put(90,60){\line(0,-1){50}}
\put(40,60){\line(-1,-1){50}} 
\put(90,60){\line(-1,-1){50}} 
\put(90,60){\line(-2,-1){100}} 
\put(34,-22){\textit{$G_1$}}
\end{picture}
\\
\\
\\
\\
\begin{picture}(100,50)(-300,-73)
\put(-10,60){\circle*{4}}
\put(40,60){\circle*{4}}
\put(-20,65){\textit{$y_1$}}
\put(30,65){\textit{$y_2$}}
\put(-10,10){\circle*{4}}
\put(40,10){\circle*{4}}
\put(-20,0){\textit{$x_1$}}
\put(33,0){\textit{$x_2$}}
\put(90,10){\circle*{4}}
\put(92,5){\textit{$x_5$}}
\put(90,60){\circle*{4}}
\put(90,65){\textit{$y_5$}}
\put(-10,60){\line(0,-1){50}}
\put(40,60){\line(0,-1){50}} 
\put(90,60){\line(0,-1){50}} 
\qbezier(-10,10)(25,0)(40,10)
\qbezier(-10,10)(45,-20)(90,10) 
\qbezier(40,10)(65,0)(90,10) 
\put(35,-22){\textit{$G_2$}}
\end{picture}\vspace*{-1.2cm}

It follows that
\begin{eqnarray*}
I(G_1)^\vee &=& (x_3x_4x_5, x_3x_4y_5, x_3y_4y_5, y_3y_4y_5), \\
I(G_2)^\vee &= &(x_1x_2x_5, y_1x_2x_5, x_1y_2x_5, x_1x_2y_5).
\end{eqnarray*}

Finally,
 \[I(G)^\vee = x_1x_2y_6I(G_1)^\vee + x_3x_4x_6I(G_2)^\vee\]
is a Betti splitting.
\end{Example}

\section{The resolution of cover ideals of very well--covered graphs}\label{sec:3}

In this section we construct the minimal free resolution of the cover ideal $I(G)^\vee$ of any Cohen--Macaulay very well--covered graph $G$. Our method uses induction on half of the number of vertices of $G$ and the mapping cone described in Subsection \ref{sub:1}. Indeed, the mapping cone applied to the Betti splitting of $I(G)^\vee$ (Proposition \ref{Prop:BettiSplitVWCG}) yields a minimal free resolution of $I(G)^\vee$, if we know the minimal free resolutions of the cover ideals of three suitable subgraphs which we can associate to the given graph $G$. 

Firstly, let $G_1$ and $G_2$ be two graphs, we define \emph{intersection graph} of $G_1$ ad $G_2$ the graph $G$ with $V(G)=V(G_1)\cap V(G_2)$ and
$E(G)= \{e: e\in E(G_1)\cap E(G_2)\}$. We denote $G$ by $G_1\cap G_2$.

\begin{Setup}\label{Setup:ResVeryWellCG}
	\rm Let $G$ be a Cohen--Macaulay very well--covered graph with $2n$ vertices. Let $N[x_n]=\{x_{i_1},x_{i_2},\dots,x_{i_t},x_{n}, y_n\}$, $N[y_n]=\{x_{j_1},x_{j_2},\dots,x_{j_p},x_n, y_n\}$ and define
	\begin{align*}
	G_1\ &=\ G\setminus\{x_{i_1},y_{i_1},x_{i_2},y_{i_2},\dots,x_{i_t},y_{i_t},x_{n},y_{n}\},\\ G_2\ &=\ G\setminus\{x_{j_1},y_{j_1},\!x_{j_2},\!y_{j_2},\dots,x_{j_p},y_{j_p},x_{n},y_{n}\}.
	\end{align*}
	By Proposition \ref{Prop:BettiSplitVWCG}, both $G_1$ and $G_2$ are Cohen--Macaulay very well--covered graphs. Furthermore,  $\vert V(G_1)\vert = 2(n-1-t)$ and $\vert V(G_2)\vert = 2(n-1-p)$. \\
	Set $J=I(G)^\vee$, $J_1=x_{i_1}x_{i_2}\cdots x_{i_t}\cdot y_n I(G_1)^\vee$ and $J_2=x_{j_1}x_{j_2}\cdots x_{j_p}\cdot x_n I(G_2)^\vee$. Then, Proposition \ref{Prop:BettiSplitVWCG} implies that $J=J_1+J_2$ is a Betti splitting of $J$.\\	
	Finally, let us consider the subgraph $G_3=G_1\cap G_2$ of $G$. Since the structure of $G_1$ and $G_2$, it is clear that $G_3$ is Cohen--Macaulay very well--covered, too.
\end{Setup}

For instance, in Example \ref{expl:main}, $G_3 = G\setminus \{x_i, y_i: i\neq 5\}$.\\

The following lemmas will be crucial in the sequel.
\begin{Lemma}\label{Lem:N(xn)capN(y_n)}
	Assume Setup \ref{Setup:ResVeryWellCG}. Then
	$$
	N[x_n]\cap N[y_n]\ =\ \{x_n,y_n\}.
	$$
\end{Lemma}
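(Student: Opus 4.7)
The plan is to show the nontrivial inclusion $N[x_n] \cap N[y_n] \subseteq \{x_n,y_n\}$; the reverse inclusion is immediate since $x_ny_n \in E(G)$ by Characterization \ref{char:veryWellCGCM}(b)(ii), so both $x_n$ and $y_n$ belong to each closed neighborhood.

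First I would analyze the shape of $N[x_n]$ and $N[y_n]$ separately. By the independence of the set $Y$ (condition (i)), no two $y$'s are adjacent, so $N(y_n) \subseteq \{x_1,\dots,x_n\}$; this already shows $N[y_n]\setminus\{x_n,y_n\}$ consists only of vertices $x_{j_q}$ with $j_q < n$, matching the description in Setup \ref{Setup:ResVeryWellCG}. Similarly, for the $x_n$ side, I would argue that $N(x_n)\cap Y = \{y_n\}$: if some $y_j$ with $j\neq n$ were adjacent to $x_n$, then $x_ny_j \in E(G)$ would force $n\leq j$ by (iii), which is impossible for $j < n$ and excluded by $j\ne n$. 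Hence $N[x_n]\setminus\{x_n,y_n\}\subseteq\{x_1,\dots,x_{n-1}\}$.

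Now suppose for contradiction that there is some $v \in N[x_n]\cap N[y_n]$ with $v\notin\{x_n,y_n\}$. By the previous paragraph, $v$ must be of the form $x_k$ with $k < n$, and both edges $x_kx_n$ and $x_ky_n \in E(G)$ would hold. But condition (iv) of Characterization \ref{char:veryWellCGCM} applied to $i=k$, $j=n$ says that $x_ky_n \in E(G)$ forces $x_kx_n \notin E(G)$, a direct contradiction.

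I do not expect any real obstacle here; the entire argument is a short combinatorial unpacking of Characterization \ref{char:veryWellCGCM}, and the only delicate point is making sure no stray $y_j$ can sneak into $N(x_n)$, which is ruled out by the labeling condition (iii). So the proof should fit comfortably in a few lines.
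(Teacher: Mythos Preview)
Your proof is correct and uses essentially the same idea as the paper: the key step in both is applying Characterization~\ref{char:veryWellCGCM}(iv) to rule out any $x_k$ being adjacent to both $x_n$ and $y_n$. The only difference is that you explicitly justify why any common vertex must be of the form $x_k$ (via conditions (i) and (iii)), whereas the paper takes this for granted since Setup~\ref{Setup:ResVeryWellCG} already records $N[x_n]$ and $N[y_n]$ as consisting only of $x$-variables together with $\{x_n,y_n\}$.
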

\begin{proof}
	For all $s\in[p]$, $x_{j_s}y_n\in E(G)$. Thus Characterization \ref{char:veryWellCGCM}(iv) implies that $x_{j_s}x_n\notin E(G)$, and so $x_{j_s}\notin N[x_n]$, for all $s\in[p]$. Finally $N[x_n]\cap N[y_n]=\{x_n,y_n\}$.
\end{proof}
\begin{Lemma}\label{Lem:J12intersect}
	Assume Setup \ref{Setup:ResVeryWellCG}. Then
	$$
	 J_1\cap J_2 =x_{i_1}x_{i_2}\cdots x_{i_t}\cdot x_{j_1}x_{j_2}\cdots x_{j_p}\cdot x_n y_n I(G_3)^\vee.
	$$
\end{Lemma}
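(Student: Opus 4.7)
My plan is to prove the set equality $J_1\cap J_2 = x_{i_1}\cdots x_{i_t}x_{j_1}\cdots x_{j_p}x_ny_n\, I(G_3)^\vee$ by establishing the two inclusions separately, working with monomial supports and exploiting Lemma \ref{Lem:N(xn)capN(y_n)} together with condition (v) of Characterization \ref{char:veryWellCGCM}.

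\textbf{Inclusion $\supseteq$.} Let $w\in G(I(G_3)^\vee)$, so $\supp(w)$ is a minimal vertex cover of $G_3$ of size $n-t-p-1$. I will set $m = x_{i_1}\cdots x_{i_t}x_{j_1}\cdots x_{j_p}x_ny_n\, w$ and show $m\in J_1$ (membership in $J_2$ follows by a symmetric argument). Writing $m = (x_{i_1}\cdots x_{i_t}y_n)\cdot(x_{j_1}\cdots x_{j_p}x_nw)$, the task reduces to checking $x_{j_1}\cdots x_{j_p}x_nw\in I(G_1)^\vee$, i.e.\ that $C := \{x_{j_1},\ldots,x_{j_p}\}\cup\supp(w)$ is a vertex cover of $G_1$. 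A cardinality count gives $|C|=p+(n-t-p-1)=|V(G_1)|/2$, so by the very well--covered property (Proposition \ref{Prop:GremoveVerticesVeryWC}) minimality will come for free. To cover an arbitrary $e\in E(G_1)$: if both endpoints lie in $V(G_3)$, then $e\in E(G_3)$ is covered by $\supp(w)$; otherwise some endpoint belongs to $\{x_{j_s},y_{j_s}:s\in[p]\}$. The $x_{j_s}$ case is immediate, while the only remaining possibility is an edge $\{x_k,y_{j_s}\}$ with $x_k\in V(G_3)$. Here I apply Characterization \ref{char:veryWellCGCM}(v) with $j=j_s$ and $z_i=y_n$ (valid since $x_{j_s}y_n\in E(G)$) to conclude $y_nx_k\in E(G)$, forcing $x_k\in N(y_n)\setminus\{y_n\}\subseteq\{x_{j_1},\ldots,x_{j_p},x_n\}$, which contradicts $k\in V(G_3)$. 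Hence this subcase never occurs, and $C$ is indeed a cover.

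\textbf{Inclusion $\subseteq$.} Since $J_1\cap J_2$ is generated by the monomials $\lcm(u,v)$ with $u\in G(J_1)$, $v\in G(J_2)$, it suffices to show each such $\lcm(u,v)$ lies in the RHS. Writing $u=x_{i_1}\cdots x_{i_t}y_nu'$, $v=x_{j_1}\cdots x_{j_p}x_nv'$ with $u'\in G(I(G_1)^\vee)$, $v'\in G(I(G_2)^\vee)$, Lemma \ref{Lem:N(xn)capN(y_n)} (disjointness of the $i_r$'s and $j_s$'s, both distinct from $n$) guarantees that $x_{i_1}\cdots x_{i_t}x_{j_1}\cdots x_{j_p}x_ny_n$ divides $\lcm(u,v)$, so I can write $\lcm(u,v)=x_{i_1}\cdots x_{i_t}x_{j_1}\cdots x_{j_p}x_ny_n\cdot w'$ for a squarefree monomial $w'$. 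It remains to verify $w'\in I(G_3)^\vee$, which by Alexander duality amounts to showing that $\supp(w')\cap V(G_3)$ is a vertex cover of $G_3$. For any $e\in E(G_3)$, both endpoints lie in $V(G_3)\subseteq V(G_1)$; since $\supp(u')$ is a vertex cover of $G_1$, some endpoint $z$ of $e$ belongs to $\supp(u')$, and because $z\in V(G_3)$ we have $z\notin\{x_{j_s},y_{j_s}:s\in[p]\}$, giving $z\in\supp(u')\setminus\{x_{j_s},y_{j_s}\}\subseteq\supp(w')\cap V(G_3)$.

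\textbf{Main obstacle.} The delicate point is clearly the elimination of ``bad edges'' $\{x_k,y_{j_s}\}$ in Step 2, which is where the transitivity-like property of Characterization \ref{char:veryWellCGCM}(v) must be invoked with exactly the right choice of indices; everything else is essentially bookkeeping with supports of squarefree monomials and the characterization of minimal generators given in Lemma \ref{Lem:minimalGeneratorsI(G)^vee}. As a sanity check, the RHS is generated in degree $t+p+2+(n-t-p-1)=n+1$, matching the degree in which $J_1\cap J_2$ is generated by Proposition \ref{Prop:IBettiSplitLinRes} applied to the Betti splitting $J=J_1+J_2$ from Proposition \ref{Prop:BettiSplitVWCG}.
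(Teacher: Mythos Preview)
Your proof is correct. Both inclusions are established cleanly, and your application of Characterization~\ref{char:veryWellCGCM}(v) with $z_i=y_n$ (and, symmetrically, $z_i=x_n$ for $J_2$) is exactly the right move to eliminate the problematic edges $\{x_k,y_{j_s}\}$. One cosmetic point: in the factorization $m=(x_{i_1}\cdots x_{i_t}y_n)\cdot(x_{j_1}\cdots x_{j_p}x_nw)$ you carry the factor $x_n$ into the second bracket and then silently drop it when defining $C$; since $x_n\notin V(G_1)$ this is harmless, but it would read more smoothly to note that $x_{j_1}\cdots x_{j_p}x_nw\in I(G_1)^\vee$ iff $x_{j_1}\cdots x_{j_p}w\in I(G_1)^\vee$.

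Your route differs from the paper's in one interesting respect. For the inclusion $\subseteq$, the paper first invokes Proposition~\ref{Prop:IBettiSplitLinRes} to deduce that $J_1\cap J_2$ has an $(n+1)$--linear resolution, so that only those $\lcm(u_1,u_2)$ of degree exactly $n+1$ matter; in that situation $u_1$ and $u_2$ differ in a single variable (namely $y_n$ versus $x_n$), which forces $x_{j_1}\cdots x_{j_p}$ to divide $u_1$ and makes the quotient $u_1/(x_{i_1}\cdots x_{i_t}x_{j_1}\cdots x_{j_p}y_n)$ automatically a minimal vertex cover of $G_3$. You instead handle \emph{all} least common multiples directly, showing that $\supp(w')\cap V(G_3)$ covers $G_3$ via the inclusion $\supp(u')\setminus\{x_{j_s},y_{j_s}\}\subseteq\supp(w')$. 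This is more elementary---it does not rely on the linearity of the intersection---and what you present as a ``sanity check'' at the end is, in the paper, an essential input. Conversely, for $\supseteq$ the paper simply asserts that $\supp(x_{j_1}\cdots x_{j_p}v)$ is a minimal vertex cover of $G_1$, while you spell out precisely why (the argument being the same one already used in the proof of Proposition~\ref{Prop:BettiSplitVWCG}).
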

\begin{proof} Set $ J_{1,2} = J_1\cap J_2$. 
	Since $J=J_1+J_2$ is a Betti splitting and $J$ has an $n$--linear resolution,  then $J_{1,2}$ has an $(n+1)$--linear resolution (Proposition \ref{Prop:IBettiSplitLinRes}). A generating set for $J_{1,2}$ is the set $\{\lcm(u_1,u_2):u_1\in G(J_1),u_2\in G(J_2)\}$. Since $J_{1,2}$ is equigenerated in degree $n+1$, then
	$$
	G(J_{1,2})\ =\ \big\{\lcm(u_1,u_2)\ :\ u_1\in G(J_1),\ u_2\in G(J_2),\ \deg(\lcm(u_1,u_2))=n+1\big\}.
	$$
	By Lemma \ref{Lem:N(xn)capN(y_n)}, $N[x_n]\cap N[y_n]=\{x_n,y_n\}$. Thus, by the presentation of $J_{1}$ and $J_{2}$, we get that each monomial $w\in G(J_{1,2})$ is divided by $x_{i_1}\cdots x_{i_t}\cdot x_{j_1}\cdots x_{j_p}\cdot x_n y_n$. Note that $(X\cup Y)\setminus\{x_{i_1},y_{i_1},\dots,x_{i_t},y_{i_t},x_{j_1},y_{j_1},\dots,x_{j_p},y_{j_p},x_n,y_n\}$ is the vertex set of the graph $G_3$. Moreover the support of $v_1=u_1/(x_{i_1}x_{i_2}\cdots x_{i_t}\cdot x_{j_1}x_{j_2}\cdots x_{j_p}\cdot y_n)$ is a minimal vertex cover of the very well--covered graph $G_3$. Since $\lcm(u_1,u_2)=x_n u_1$, we get that $\lcm(u_1,u_2)=x_{i_1}\cdots x_{i_t}\cdot x_{j_1}\cdots x_{j_p}\cdot x_ny_n v_1\in G(x_{i_1}\cdots x_{i_t}\cdot x_{j_1}\cdots x_{j_p}\cdot x_ny_nI(G_3)^\vee)$.
	\medskip\\
	Conversely, let $w=x_{i_1}\cdots x_{i_t}\cdot x_{j_1}\cdots x_{j_p}\cdot x_ny_n v\in G(x_{i_1}\cdots x_{i_t}\cdot x_{j_1}\cdots x_{j_p}\cdot x_ny_nI(G_3)^\vee)$, with $v\in I(G_3)^\vee$. Then $\supp(x_{j_1}x_{j_2}\cdots x_{j_p}\cdot v)$ is a minimal vertex cover of $G_1$ and $\supp(x_{i_1}x_{i_2}\cdots x_{i_t}\cdot v)$ is a minimal vertex cover of $G_2$. Thus $u_1=w/x_n\in G(J_1)$ and $u_2=w/y_n\in G(J_2)$. Since $w=\lcm(u_1,u_2)$ and $\deg(w)=n+1$, we have that $w\in G(J_{1,2})$, showing the other inclusion.
\end{proof}

We now turn to the construction of the minimal free resolution. For a subset $C$ of the set of variables $X\cup Y=\{x_1,\dots,x_n,y_1,\dots,y_n\}$, we set
\begin{equation}\label{eq:notation}
{\bf z}_C= {\bf x}_{C_x}{\bf y}_{C_y},
\end{equation}
with $C_x=\{i:x_i\in C\}$ and $C_y=\{j:y_j\in C\}$. 

If $G$ is graph such that $V(G)=\{x_1,\dots,x_n,y_1,\dots,y_n\}$ and $C\in\mathcal{C}(G)$,  we define the set $\mathcal{C}(G;C)=\{x_s:y_s\in C\ \text{and}\ (C\setminus y_s)\cup x_s\in\mathcal{C}(G)\}$. Recall that $\mathcal{C}(G)$ is the set of all minimal vertex cover of $G$. \\

For instance, in Example \ref{expl:charc}, for $C = \{x_1, x_2, y_3, y_4\}$, $\mathcal{C}(G;C)=\{x_3\}$. Indeed, $\{x_1, x_2, y_4\}\cup x_3= \{x_1, x_2, x_3, y_4\}$ is a minimal vertex cover of the given graph $G$. One can observe that $x_4\notin \mathcal{C}(G;C)$, since $\{x_1, x_2, y_3\}\cup x_4$ is not a vertex cover of $G$. Moreover, for $C = \{x_1, x_2, x_3, x_4\}$, $\mathcal{C}(G;C)=\emptyset$.\medskip

In what follows, we denote by $\binom{\mathcal{C}(G;C)}{i}$ the set of all subsets of size $i$ of $\mathcal{C}(G;C)$, $0\le i\le \vert \mathcal{C}(G;C)\vert$. With abuse of notation, for $\sigma\in\binom{\mathcal{C}(G;C)}{i}$, we set ${\bf x}_\sigma= \prod_{x_s\in \sigma}x_s$. In particular, ${\bf x}_\emptyset=1$. 
\begin{Construction}\label{Constr:MinimalFreeResVeryWellCG}
	\rm Assume Setup \ref{Setup:ResVeryWellCG}. Let
	$$
	\FF\ :\ \ \dots\rightarrow F_i\xrightarrow{\ d_{i}\ }F_{i-1}\xrightarrow{\ d_{i-1}}\cdots\xrightarrow{\ d_{2}\ }F_1\xrightarrow{\ d_1\ }F_0\xrightarrow{\ d_0\ }J\rightarrow0
	$$
	be the complex
	\begin{enumerate}
		\item[-] whose $i$th free module $F_i$ has as a basis the symbols ${\bf f}(C;\sigma)$ having multidegree ${\bf z}_C{\bf x}_\sigma$, where $C\in\mathcal{C}(G)$ and $\sigma\in\binom{\mathcal{C}(G;C)}{i}$, \emph{i.e.}, $\sigma\subseteq\mathcal{C}(G;C)$ is a subset of size $i$;
		\item[-] and whose $i$th differential is given by $d_0({\bf f}(C;\emptyset))={\bf z}_C$ for $i=0$ and for $i>0$ is defined as follows:
	\end{enumerate}
	$$
	d_i({\bf f}(C;\sigma))\ =\ \sum_{x_s\in\sigma}(-1)^{\alpha(\sigma;x_s)}\big[y_s{\bf f}((C\setminus y_s)\cup x_s;\sigma\setminus x_s)-x_s{\bf f}(C;\sigma\setminus x_s)\big],
	$$
	where $\alpha(\sigma;x_s)=|\{x_j\in\sigma:j>s\}|$.
\end{Construction}

From now on, we set $J_{1,2}=J_1\cap J_2$ and denote by $(\FF_{J_{1,2}},d^{J_{1,2}})$, $(\FF_{J_1},d^{J_1})$, $(\FF_{J_2},d^{J_2})$ the minimal free resolutions of $J_{1,2}$, $J_1$, $J_2$, respectively.

\begin{Theorem}\label{Thm:MinFreeResGCMverywell}
	The complex $\FF$ given in Construction \ref{Constr:MinimalFreeResVeryWellCG} is the minimal free resolution of $J=I(G)^\vee$.
\end{Theorem}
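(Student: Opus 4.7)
The plan is to proceed by induction on $n$, half the number of vertices of $G$, combining the Betti splitting $J = J_1 + J_2$ of Proposition~\ref{Prop:BettiSplitVWCG} with the mapping cone construction from Subsection~\ref{sub:1}. For the base case $n=1$, the graph $G$ is the single edge $x_1y_1$ with $\mathcal{C}(G) = \{\{x_1\},\{y_1\}\}$, $\mathcal{C}(G;\{y_1\}) = \{x_1\}$, and $\mathcal{C}(G;\{x_1\}) = \emptyset$; one checks directly that $\FF$ reduces to the Koszul complex $0 \to S(-x_1-y_1) \to S(-x_1) \oplus S(-y_1) \to J \to 0$, which is the minimal free resolution of $J = (x_1,y_1)$.

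For the inductive step, Proposition~\ref{Prop:GremoveVerticesVeryWC} ensures that the graphs $G_1$, $G_2$ of Setup~\ref{Setup:ResVeryWellCG} and $G_3 = G_1 \cap G_2$ are Cohen--Macaulay very well--covered with fewer than $2n$ vertices. By the inductive hypothesis together with Lemma~\ref{Lem:J12intersect}, multiplication by the monomials $m_1 = x_{i_1}\cdots x_{i_t}y_n$, $m_2 = x_{j_1}\cdots x_{j_p}x_n$, and $m = x_{i_1}\cdots x_{i_t}x_{j_1}\cdots x_{j_p}x_ny_n$ produces explicit minimal free resolutions $\FF_{J_1}, \FF_{J_2}, \FF_{J_{1,2}}$ of $J_1, J_2, J_{1,2}$, given by Construction~\ref{Constr:MinimalFreeResVeryWellCG} applied to $G_k$. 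Since $J = J_1 + J_2$ is a Betti splitting, Theorem~\ref{Thm:MainBettiSplittingFranciscoHaTuyl}(c) guarantees that the mapping cone of any chain-level lift $\psi: \FF_{J_{1,2}} \to \FF_{J_1} \oplus \FF_{J_2}$ of $\psi_{-1}(w)=(w,-w)$ is a minimal free resolution of $J$; the task thus reduces to identifying this mapping cone with $\FF$.

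The key combinatorial observation is that, by Lemma~\ref{Lem:minimalGeneratorsI(G)^vee} and Characterization~\ref{char:veryWellCGCM}(iv)-(v), the basis of $F_i$ partitions into three classes: \textbf{(i)} pairs $(C,\sigma)$ with $y_n \in C$ and $x_n \notin \sigma$, in natural bijection with the basis of $(\FF_{J_1})_i$; \textbf{(ii)} pairs $(C,\sigma)$ with $x_n \in C$ (which forces $y_n \notin C$ and hence $x_n \notin \sigma$), in bijection with the basis of $(\FF_{J_2})_i$; and \textbf{(iii)} pairs $(C,\sigma)$ with $y_n \in C$ and $x_n \in \sigma$, in bijection with the basis of $(\FF_{J_{1,2}})_{i-1}$. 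Case (iii) hinges on the observation that $x_n \in \mathcal{C}(G;C)$ if and only if $y_n \in C$ and $\{x_{j_1},\dots,x_{j_p}\} \subseteq C$, which by Lemma~\ref{Lem:J12intersect} matches exactly the generators of $J_{1,2}$. Under these bijections, $F_i \cong (\FF_{J_{1,2}})_{i-1} \oplus (\FF_{J_1})_i \oplus (\FF_{J_2})_i$ as multigraded $S$-modules, reproducing the module structure of the mapping cone.

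It then remains to check that the differential formula of Construction~\ref{Constr:MinimalFreeResVeryWellCG} reproduces the mapping cone differential $d_i = (-d_{i-1}^{J_{1,2}}, \psi_{i-1} + d_i^{J_1 \oplus J_2})$ on each class. For cases (i) and (ii), each summand of $d_i({\bf f}(C;\sigma))$ stays in its class (as $s \ne n$ preserves $y_n \in C$, respectively $x_n \in C$, after the swap), and the formula reduces manifestly to $d^{J_1}$ or $d^{J_2}$ as produced by Construction~\ref{Constr:MinimalFreeResVeryWellCG} on $G_1$, $G_2$. For case (iii), splitting the sum over $\sigma$ into the unique term $x_s = x_n$---with sign $\alpha(\sigma;x_n)=0$---and the remaining terms $x_s \ne x_n$ yields, on the one hand, the chain-level lift $\psi_{i-1}$, with $\psi_0({\bf f}_{J_{1,2}}(C';\emptyset)) = x_n{\bf f}_{J_1}(C_1;\emptyset) - y_n{\bf f}_{J_2}(C_2;\emptyset)$ manifestly lifting $\psi_{-1}$ (since $x_n m_1 = y_n m_2 = m$), and on the other hand $\pm d^{J_{1,2}}$, via the identity $\alpha(\sigma;x_s) = \alpha(\sigma\setminus x_n;x_s) + 1$ for $s < n$. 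I expect the main obstacle to be this final sign bookkeeping: pinning down the identification of case (iii) basis elements, possibly up to a global sign on each basis element, so that the Construction's signs agree with the mapping cone's Koszul-type signs. Once this normalization is fixed, the two complexes coincide and the induction closes.
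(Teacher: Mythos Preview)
Your proposal is correct and follows essentially the same approach as the paper: strong induction on $n$, the Betti splitting of Proposition~\ref{Prop:BettiSplitVWCG}, the three-way partition of basis elements according to whether $x_n\in C$, or $y_n\in C$ with $x_n\notin\sigma$, or $y_n\in C$ with $x_n\in\sigma$, and the sign identity $\alpha(\sigma;x_s)=\alpha(\sigma\setminus x_n;x_s)+1$ for $s<n$ to match the mapping cone differential. One small slip to fix when you write it out: the claimed equality $x_n m_1 = y_n m_2 = m$ is false as an identity of monomials (the missing factors $x_{j_1}\cdots x_{j_p}$ and $x_{i_1}\cdots x_{i_t}$ are absorbed into ${\bf z}_{C_1}$ and ${\bf z}_{C_2}$, not into $m_1,m_2$), though the multidegree check you intend does go through.
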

\begin{proof}
	We proceed by strong induction on $|V(G)|/2=n$. For $n=1$, $I(G)=(x_1y_1)$ and $J=I(G)^\vee=(x_1,y_1)$. In this case one readily verifies that the complex $\FF$ of Construction \ref{Constr:MinimalFreeResVeryWellCG} is the minimal free resolution of $J=I(G)^\vee$. So, let $n>1$. The graphs $G_1,G_2,G_3$ have vertex sets whose cardinality is less than $\vert V(G)\vert$, thus by induction we can assume that the minimal free resolutions of $I(G_1)^\vee,I(G_2)^\vee,I(G_3)^\vee$ are as given in Construction \ref{Constr:MinimalFreeResVeryWellCG}. As a consequence, we know explicitly the resolutions $\FF_{J_{1,2}},\FF_{J_1},\FF_{J_2}$, since each of these resolutions is equal to one of the three previously mentioned resolutions up to multiplication by a suitable monomial.
	
	Let $(\FF_{J},d^{J})$ be the resolution obtained by the mapping cone applied to the Betti splitting $J=J_1+J_2$. Then $\FF_{J}$ is the minimal free resolution of $J$. We show that $\FF_{J}$ can be identified with $\FF$. We achieve this goal in three steps.
	\medskip\\
	\textsc{Step 1.} Let us show that the free modules of $\FF_{J}$ have the basis described in Construction \ref{Constr:MinimalFreeResVeryWellCG}. By the mapping cone (Subsection \ref{sub:1}, (i)), 
	$F_i^{J}=F_{i-1}^{J_{1,2}}\oplus F_i^{J_1}\oplus F_i^{J_2}$. 
	
	The bases of $F_i^{J_1},F_i^{J_2}$ and $F_{i}^{J_{1,2}}$ are the following ones, respectively:
	\begin{align*}
	\mathcal{B}_1\ &=\ \Big\{x_{i_1}x_{i_2}\cdots x_{i_t}\cdot y_n\cdot{\bf f}_{J_1}(C_1;\sigma_1)\ :\ C_1\in\mathcal{C}(G_1),\ \sigma_1\in\binom{\mathcal{C}(G_1;C_1)}{i}\Big\},\\
	\mathcal{B}_2\ &=\ \Big\{x_{j_1}x_{j_2}\cdots x_{j_p}\cdot x_n\cdot{\bf f}_{J_2}(C_2;\sigma_2)\ :\ C_2\in\mathcal{C}(G_2),\ \sigma_2\in\binom{\mathcal{C}(G_2;C_2)}{i}\Big\},\\
	\mathcal{B}_3\ &=\ \Big\{x_{i_1}\cdots x_{i_t}\cdot x_{j_1} \cdots x_{j_p}\cdot x_ny_n\cdot{\bf f}_{J_{1,2}}(C_3;\sigma_3)\ :\ C_3\in\mathcal{C}(G_3),\ \sigma_3\in\binom{\mathcal{C}(G_3;C_3)}{i}\Big\}.
	\end{align*}
	
	Let ${\bf f}(C;\sigma)$ be a basis element of $F_i$ as in Construction \ref{Constr:MinimalFreeResVeryWellCG}, \emph{i.e.}, with multidegree ${\bf z}_C{\bf x}_{\sigma}$ and with $C\in\mathcal{C}(G)$, $\sigma\in\binom{\mathcal{C}(G;C)}{i}$. We distinguish three possible cases.
	\smallskip\\
	\textsc{Case 1.1.} Let $x_n\in C$. Then $N[y_n]\setminus N[x_n]=\{x_{j_1},\dots,x_{j_p}\}\subset C$. Indeed, $C$ is a vertex cover of $G$, $y_n\notin C$ (Lemma \ref{Lem:minimalGeneratorsI(G)^vee}) but $x_{j_r}y_n\in E(G)$ for $r\in[p]$. Hence, $C_2=C\setminus\{x_{j_1},\dots,x_{j_p},x_n\}$ is a minimal vertex cover of $G_2$. Furthermore, for all $x_s\in\sigma$, $(C_2\setminus y_s)\cup x_s$ is a minimal vertex cover of $G_2$ because $(C\setminus y_s)\cup x_s$ is a minimal vertex cover of $G$. Since ${\bf b}=x_{j_1}x_{j_2}\cdots x_{j_p}\cdot x_n\cdot{\bf f}_{J_2}(C_2;\sigma)\in\mathcal{B}_2$ has the same multidegree of ${\bf f}(C;\sigma)$, we can identify ${\bf f}(C;\sigma)$ with ${\bf b}$.
	\smallskip\\
	\textsc{Case 1.2.} Let $y_n\in C$ and $x_n\notin\sigma$. Then $C_1=C\setminus\{x_1,\dots,x_{i_t},y_n\}$ is a minimal vertex cover of $G_1$. As before, we can identify ${\bf f}(C;\sigma)$ with $x_{i_1}\dots x_{i_t}\cdot y_n\cdot {\bf f}_{J_1}(C_1;\sigma)\in\mathcal{B}_1$.
	\smallskip\\
	\textsc{Case 1.3.} Let $y_n\in C$ and $x_n\in\sigma$. Then both $C$ and $(C\setminus y_n)\cup x_n$ are minimal vertex covers of $G$. Hence, $(N[x_n]\cup N[y_n])\setminus\{x_n,y_n\}=\{x_{i_1},\dots,x_{i_t},x_{j_1},\dots,x_{j_p}\}\subset C$.
	Setting $C_3=C\setminus\{x_{i_1},\dots,x_{i_t},x_{j_1},\dots,x_{j_p},y_n\}$ and $\sigma_3=\sigma\setminus x_n$, we obtain that $C_3$ and $(C_3\setminus y_s)\cup x_s$ are both minimal vertex covers of $G_3$ for all $x_s\in\sigma_3$. Hence, in this case we can identify ${\bf f}(C;\sigma)$ with $x_{i_1}\cdots x_{i_t}\cdot x_{j_1} \cdots x_{j_p}\cdot x_ny_n\cdot{\bf f}_{J_{1,2}}(C_3;\sigma_3)\in\mathcal{B}_3$.
	\smallskip
	
	Conversely, any basis element ${\bf b}\in\mathcal{B}_{i}$ ($i=1,2,3$) can be identified with a basis element ${\bf f}(C;\sigma)$, as given in Construction \ref{Constr:MinimalFreeResVeryWellCG}. Thus, we realize that the modules $F_i^J$ have the required bases as described in Construction \ref{Constr:MinimalFreeResVeryWellCG}.
	\medskip\\
	\textsc{Step 2.} Let $\psi_{-1}:u\in J_{1,2}\mapsto(u,-u)\in J_1\oplus J_2$. In order to apply the mapping cone, we need to construct the comparison maps $\psi_i$ making the following diagram 
	$$
	\xymatrix{
		\displaystyle
		\FF_{J_{1,2}}:\cdots\ar[r] &F_2^{J_{1,2}} \ar[d]_{\psi_2}\ar[r]^{d_2^{J_{1,2}}} & F_1^{J_{1,2}} \ar[d]_{\psi_1}\ar[r]^{d_1^{J_{1,2}}} & F_0^{J_{1,2}}\ar[d]_{\psi_0}\ar[r]^{d_0^{J_{1,2}}}& J_{1,2}\ar[d]^{\psi_{-1}}\ar[r]&0\\
		\FF_{J_{1}}\oplus\FF_{J_{2}}:\cdots\ar[r] &F_2^{J_{1}}\oplus F_2^{J_{2}} \ar[r]_{d_2^{J_{1}}\oplus d_2^{J_{2}}} & F_1^{J_{1}}\oplus F_1^{J_{2}} \ar[r]_{d_1^{J_{1}}\oplus d_1^{J_{2}}} & F_0^{J_{1}}\oplus F_0^{J_{2}}\ar[r]_{d_0^{J_{1}}\oplus d_0^{J_{2}}}&J_1\oplus J_2\ar[r]&0
	}
	$$
	commutative. Using the notation in (\ref{eq:notation}), let ${\bf b}=x_{i_1}\cdots x_{i_t}\cdot x_{j_1} \cdots x_{j_p}\cdot x_ny_n\cdot{\bf f}_{J_{1,2}}(C_3;\sigma_3)={\bf z}_{N[x_n]\cup N[y_n]}\cdot{\bf f}_{J_{1,2}}(C_3;\sigma_3)\in\mathcal{B}_3$ be a basis element of $F_i^{J_{1,2}}$. We define
	$$
	\psi_i({\bf b})\ =\ \big({\bf z}_{N[x_n]}\cdot{\bf f}_{J_1}(C_3\cup\{x_{j_1},\dots,x_{j_p}\};\sigma_3),\ -{\bf z}_{N[y_n]}\cdot{\bf f}_{J_2}(C_3\cup\{x_{i_1},\dots,x_{i_t}\};\sigma_3)\big).
	$$
	To simplify the notation, we set $C_1=C_3\cup\{x_{j_1},\dots,x_{j_p}\}$ and $C_2=C_3\cup\{x_{i_1},\dots,x_{i_t}\}$. One can note that $\psi_i$ is well defined. Indeed, $C_1\in\mathcal{C}(G_1)$, $C_2\in\mathcal{C}(G_2)$, $\sigma_3\subseteq\mathcal{C}(G_1;C_1)$, and $\sigma_3\subseteq\mathcal{C}(G_2;C_2)$.
	
	We need to verify that for all $i\ge0$ and all ${\bf b}\in\mathcal{B}_3$, it is 
	$$
	\psi_{i-1}\circ d_i^{J_{1,2}}({\bf b})\ =\ (d_i^{J_1}\oplus d_i^{J_2})\circ \psi_i({\bf b}).
	$$
	For $i=0$, we have $\sigma_3=\emptyset$, thus ${\bf x}_{\sigma_3}=1$ and both sides of the equation are equal to $$({\bf z}_{N[x_n]\cup N[y_n]}{\bf z}_{C_3},-{\bf z}_{N[x_n]\cup N[y_n]}{\bf z}_{C_3}).$$
	
	Let $i>0$. To further simplify the notation, let us write the generic basis element $({\bf z}_{N[x_n]}{\bf f}_{J_1}(C_1;\sigma_1),{\bf z}_{N[y_n]}{\bf f}_{J_2}(C_2;\sigma_2))\in F_i^{J_1}\oplus F_i^{J_2}$ as ${\bf z}_{N[x_n]}{\bf f}_{J_1}(C_1;\sigma_1)+{\bf z}_{N[y_n]}{\bf f}_{J_2}(C_2;\sigma_2)$. By induction we know explicitly $d_i^{J_{1,2}},d_i^{J_1},d_i^{J_{2}}$. Let us compute $\psi_{i-1}\circ d_i^{J_{1,2}}({\bf b})$. We have
	\small\begin{align*}
	d_i^{J_{1,2}}({\bf b})&={\bf z}_{N[x_n]\cup N[y_n]}\sum_{x_s\in\sigma_3}(-1)^{\alpha(\sigma_3;x_s)}\big[y_s{\bf f}_{J_{1,2}}((C_3\setminus y_s)\cup x_s;\sigma_3\setminus x_s)-x_s{\bf f}_{J_{1,2}}(C_3;\sigma_3\setminus x_s)\big].
	\end{align*}\normalsize
	Hence,
	\begin{equation}\label{eq:psii-1di}
	\begin{aligned}
	\psi_{i-1}\circ d_i^{J_{1,2}}({\bf b})\ =\ \sum_{x_s\in\sigma_3}(-1)^{\alpha(\sigma_3;x_s)}\big[&\ y_s{\bf z}_{N[x_n]}\cdot{\bf f}_{J_1}((C_1\setminus y_s)\cup x_s;\sigma_3\setminus x_s)\\[-1.0em]
	-&\ y_s{\bf z}_{N[y_n]}\cdot{\bf f}_{J_2}((C_2\setminus y_s)\cup x_s;\sigma_3\setminus x_s)\\
	-&\ x_s{\bf z}_{N[x_n]}\cdot{\bf f}_{J_1}(C_1;\sigma_3\setminus x_s)\\
	+&\ x_s{\bf z}_{N[y_n]}\cdot{\bf f}_{J_2}(C_2;\sigma_3\setminus x_s)
	\big].
	\end{aligned}
	\end{equation}
	Now, let us compute $(d_i^{J_1}\oplus d_i^{J_2})\circ\psi_i({\bf b})$. Firstly, we have
	$$
	\psi_i({\bf b})\ =\ {\bf z}_{N[x_n]}\cdot{\bf f}_{J_1}(C_1;\sigma_3)-{\bf z}_{N[y_n]}\cdot{\bf f}_{J_2}(C_2;\sigma_3),
	$$
	then
	\begin{align*}
	(d_i^{J_1}\oplus d_i^{J_2})&\circ\psi_i({\bf b})\ ={\bf z}_{N[x_n]}\cdot d_i^{J_1}({\bf f}_{J_1}(C_1;\sigma_3))-{\bf z}_{N[y_n]}\cdot d_i^{J_2}({\bf f}_{J_2}(C_2;\sigma_3))\\
	=&\ {\bf z}_{N[x_n]}\sum_{x_s\in\sigma_3}(-1)^{\alpha(\sigma_3;x_s)}\big[y_s{\bf f}_{J_{1}}((C_1\setminus y_s)\cup x_s;\sigma_3\setminus x_s)-x_s{\bf f}_{J_{1}}(C_1;\sigma_3\setminus x_s)\big]\\
	-&\ {\bf z}_{N[y_n]}\sum_{x_s\in\sigma_3}(-1)^{\alpha(\sigma_3;x_s)}\big[y_s{\bf f}_{J_{2}}((C_2\setminus y_s)\cup x_s;\sigma_3\setminus x_s)-x_s{\bf f}_{J_{2}}(C_2;\sigma_3\setminus x_s)\big].
	\end{align*}
	A comparison with equation (\ref{eq:psii-1di}) shows that $\psi_{i-1}\circ d_i^{J_{1,2}}({\bf b})\ =\ (d_i^{J_1}\oplus d_i^{J_2})\circ \psi_i({\bf b})$.
	\medskip\\
	\textsc{Step 3.} It remains to prove that the differentials $d_i^J$ act as described in Construction \ref{Constr:MinimalFreeResVeryWellCG}. So, let ${\bf f}(C;\sigma)$, $C\in\mathcal{C}(G)$ and $\sigma\in\binom{\mathcal{C}(G;C)}{i}$ be a basis element. Let us compute $d_i^J({\bf f}(C;\sigma))$. For $i=0$, we easily see that $d_0^J({\bf f}(C;\emptyset))={\bf z}_C$, as desired.
	\smallskip
	
	Let $i>0$. We distinguish three cases.
	\smallskip\\
	\textsc{Case 3.1.} Let  $x_n\in C$. Set $C_2=C\setminus\{x_{j_1},\dots,x_{j_p},x_n\}$. Then, by the mapping cone (Subsection \ref{sub:1}, (ii)), we have:
	\begin{align*}
	d_i^J({\bf f}&(C;\sigma))\ =\ d_i^{J_2}(x_{j_1}\cdots x_{j_p}\cdot x_n\cdot{\bf f}_{J_2}(C_2;\sigma))\\
	&=\ x_{j_1}\cdots x_{j_p}\cdot x_n\sum_{x_s\in\sigma}(-1)^{\alpha(\sigma;x_s)}\big[y_s{\bf f}_{J_2}((C_2\setminus y_s)\cup x_s;\sigma\setminus x_s)-x_s{\bf f}_{J_{2}}(C_2;\sigma\setminus x_s)\big]\\
	&=\ \sum_{x_s\in\sigma}(-1)^{\alpha(\sigma;x_s)}\big[y_s{\bf f}((C\setminus y_s)\cup x_s;\sigma\setminus x_s)-x_s{\bf f}(C;\sigma\setminus x_s)\big],
	\end{align*}
	as required.
	\smallskip\\
	\textsc{Case 3.2.} Let $y_n\in C$ and $x_n\notin\sigma$. Setting $C_1=C\setminus\{x_{i_1},\dots,x_{i_t},x_n\}$, by the mapping cone we see that $d_i^{J}({\bf f}(C;\sigma))=d_i^{J_1}(x_{i_1}x_{i_2}\cdots x_{i_t}\cdot y_n\cdot{\bf f}_{J_1}(C_1;\sigma))$ has the required expression.
	\smallskip\\
	\textsc{Case 3.3.} Let $y_n\in C$ and $x_n\in\sigma$. Setting $C_3=C\setminus\{x_{i_1},\dots,x_{i_t},x_{j_1},\dots,x_{j_p},y_n\}$, $\sigma_3=\sigma\setminus x_n$ and ${\bf b}=x_{i_1}\cdots x_{i_t}\cdot x_{j_1} \cdots x_{j_p}\cdot x_ny_n\cdot{\bf f}_{J_{1,2}}(C_3;\sigma_3)$, by the mapping cone (Subsection \ref{sub:1}, (ii)), we have that
	$$
	d_i^J({\bf f}(C;\sigma))\ =\ -d_{i-1}^{J_{1,2}}({\bf b})+\psi_{i-1}({\bf b}).
	$$
	Let us compute $-d_{i-1}^{J_{1,2}}({\bf b})$. Since ${\bf z}_{N[x_n]\cup N[y_n]}=x_{i_1}\cdots x_{i_t}\cdot x_{j_1} \cdots x_{j_p}\cdot x_ny_n$, we have
	\small\begin{align*}
	-d_{i-1}^{J_{1,2}}({\bf b})\!&=\!-{\bf z}_{N[x_n]\cup N[y_n]}\!\!\sum_{x_s\in\sigma_3}\!(-1)^{\alpha(\sigma_3;x_s)}\big[y_s{\bf f}_{J_{1,2}}((C_3\setminus y_s)\cup x_s;\sigma_3\setminus x_s)-x_s{\bf f}_{J_{1,2}}(C_3;\sigma_3\setminus x_s)\big]\\
	&=-\sum_{x_s\in\sigma_3}(-1)^{\alpha(\sigma\setminus x_n;x_s)}\big[y_s{\bf f}((C\setminus y_s)\cup x_s;\sigma\setminus x_s)-x_s{\bf f}(C;\sigma\setminus x_s)\big]\\
	&=\phantom{-}\!\!\sum_{x_s\in\sigma\setminus x_n}\!\!\!(-1)^{\alpha(\sigma;x_s)}\big[y_s{\bf f}((C\setminus y_s)\cup x_s;\sigma\setminus x_s)-x_s{\bf f}(C;\sigma\setminus x_s)\big],
	\end{align*}\normalsize
	where the last equation follows from the fact that $\alpha(\sigma;x_s)=\alpha(\sigma\setminus x_n;x_s)+|\{x_n\}|$ for all $x_s\in\sigma\setminus x_n$.
	
	Whereas, for the term ${\psi}_{i-1}({\bf b})$, by the definition of the comparison maps, we have 
	\begin{align*}
	{\psi}_{i-1}({\bf b})\ &=\ {\bf z}_{N[x_n]}\cdot{\bf f}_{J_1}(C_3\cup\{x_{j_1},\dots,x_{j_p}\};\sigma_3) -{\bf z}_{N[y_n]}\cdot{\bf f}_{J_2}(C_3\cup\{x_{i_1},\dots,x_{i_t}\};\sigma_3)\\
	&=\ y_n{\bf f}((C\setminus y_n)\cup x_n;\sigma\setminus x_n)-x_n{\bf f}(C;\sigma\setminus x_n)\\
	&=\ (-1)^{\alpha(\sigma;x_n)}\big[y_n{\bf f}((C\setminus y_n)\cup x_n;\sigma\setminus x_n)-x_n{\bf f}(C;\sigma\setminus x_n)\big],
	\end{align*}
	where the last equation follows because $\alpha(\sigma;x_n)=|\emptyset|=0$. Hence, we see that
	$$
	d_i^J({\bf f}(C;\sigma))\ =\ \sum_{x_s\in\sigma}\!(-1)^{\alpha(\sigma;x_s)}\big[y_s{\bf f}((C\setminus y_s)\cup x_s;\sigma\setminus x_s)-x_s{\bf f}(C;\sigma\setminus x_s)\big],
	$$
	as desired. The induction is complete and the result follows.
\end{proof}

\begin{Corollary}\label{cor:res}
	Let $G$ be a Cohen--Macaulay very well--covered graph. Then 
	\begin{align*}
	\beta_i(I(G)^\vee)\ &=\ \sum_{C\in\mathcal{C}(G)}\binom{|\mathcal{C}(G;C)|}{i},\ \ \ \ i\ge0,\\
	\pd(I(G)^\vee)\ &=\ \max\big\{\big|\mathcal{C}(G;C)\big|\ :\ C\in\mathcal{C}(G)\big\}. 
	\end{align*}
	In particular, the graded Betti numbers of $I(G)^\vee$ do not depend upon the characteristic of the underlying field $K$.
\end{Corollary}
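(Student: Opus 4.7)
The plan is to read the Betti numbers directly off the explicit minimal free resolution $\FF$ built in Construction \ref{Constr:MinimalFreeResVeryWellCG}, whose minimality is granted by Theorem \ref{Thm:MinFreeResGCMverywell}. Since $\FF$ is minimal, $\beta_i(I(G)^\vee)$ equals the rank of $F_i$, which is precisely the number of basis symbols ${\bf f}(C;\sigma)$ with $C\in\mathcal{C}(G)$ and $\sigma\in\binom{\mathcal{C}(G;C)}{i}$. Partitioning these symbols according to the minimal vertex cover $C$ and noting that for a fixed $C$ the number of subsets of $\mathcal{C}(G;C)$ of cardinality $i$ is $\binom{|\mathcal{C}(G;C)|}{i}$, one immediately obtains
\[
\beta_i(I(G)^\vee)\ =\ \sum_{C\in\mathcal{C}(G)}\binom{|\mathcal{C}(G;C)|}{i}.
\]

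For the projective dimension, recall that $\pd(I(G)^\vee)=\max\{i:\beta_i(I(G)^\vee)\ne 0\}$. Using the formula just derived, $\beta_i(I(G)^\vee)\ne 0$ if and only if there exists some $C\in\mathcal{C}(G)$ with $\binom{|\mathcal{C}(G;C)|}{i}\ne 0$, i.e.\ with $i\le|\mathcal{C}(G;C)|$. Hence the largest such $i$ is $\max\{|\mathcal{C}(G;C)|:C\in\mathcal{C}(G)\}$, giving the second formula.

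Finally, for the independence from the characteristic of $K$, the point is that Construction \ref{Constr:MinimalFreeResVeryWellCG} is purely combinatorial: the basis of each $F_i$ is indexed by pairs $(C,\sigma)$ defined in terms of $G$ alone, the multidegrees ${\bf z}_C{\bf x}_\sigma$ are integer vectors determined only by $G$, and the differentials have coefficients $\pm 1, x_s, y_s$ that do not involve $K$. Thus the very same complex serves as the minimal multigraded free resolution over any base field, and therefore the graded Betti numbers $\beta_{i,j}(I(G)^\vee)$ are independent of $\mathrm{char}(K)$.

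I expect no serious obstacle here: the whole argument is bookkeeping once Theorem \ref{Thm:MinFreeResGCMverywell} is in hand. The only point that requires a line of comment is the passage from \emph{$\FF$ is a free resolution with basis as described} to \emph{$\FF$ is minimal}; this is already embedded in Theorem \ref{Thm:MinFreeResGCMverywell}, whose proof established minimality inductively via the Betti splitting of Proposition \ref{Prop:BettiSplitVWCG}, so we may invoke it directly.
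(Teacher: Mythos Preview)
Your proposal is correct and is precisely the intended argument: the paper states this result as an immediate corollary of Theorem \ref{Thm:MinFreeResGCMverywell} with no separate proof, and your reading of the ranks of the $F_i$ from the combinatorially described basis in Construction \ref{Constr:MinimalFreeResVeryWellCG} is exactly how one extracts the formulas. The remark on characteristic independence is likewise the obvious consequence of the resolution being defined over $\ZZ$ with $\pm1$ coefficients.
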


Let $G$ be a graph and let $z_{i}z_{j},z_{k}z_{\ell}\in E(G)$ be a pair of edges. We say that $z_{i}z_{j}$ and $z_{k}z_{\ell}$ are \textit{3--disjoint} if the induced subgraph of $G$ on the vertex set $\{z_i,z_j,z_k,z_\ell\}$ consists of two disjoint edges. The maximum size of a set of pairwise $3$--disjoint edges in $G$ is denoted by $a(G)$ \cite{MMCRTY2011}.

The next result holds.

\begin{Corollary}\label{cor:pdGa(G)}
	Let $G$ be a Cohen--Macaulay very well--covered graph. Then
	$$
	a(G) = \reg(S/I(G))= \max\big\{\big|\mathcal{C}(G;C)\big|\ :\ C\in\mathcal{C}(G)\big\}=\pd(I(G)^\vee).
	$$
\end{Corollary}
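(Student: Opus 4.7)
The plan is to derive the four-way equality as a chain of three identities, namely
\begin{equation*}
a(G) \;=\; \reg(S/I(G)) \;=\; \pd(I(G)^\vee) \;=\; \max\bigl\{|\mathcal{C}(G;C)| \,:\, C \in \mathcal{C}(G)\bigr\},
\end{equation*}
each of which comes from a different source. The rightmost identity is immediate from Corollary \ref{cor:res}, which was read off the explicit minimal free resolution of $I(G)^\vee$ constructed in Theorem \ref{Thm:MinFreeResGCMverywell}; so this step needs nothing beyond what has already been established in Section \ref{sec:3}.

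For the middle identity, I would invoke Terai's duality formula: for any squarefree monomial ideal $I \subset S$ one has $\reg(I) = \pd(S/I^\vee)$. Applying this with $I = I(G)$, together with the standard shifts $\reg(S/I(G)) = \reg(I(G)) - 1$ and $\pd(S/I(G)^\vee) = \pd(I(G)^\vee) + 1$, yields $\reg(S/I(G)) = \pd(I(G)^\vee)$ at once. No computation specific to very well--covered graphs is needed here.

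The leftmost identity $a(G) = \reg(S/I(G))$ is the only step that is not purely formal. It is precisely \cite[Lemma 3.4]{MMCRTY2011}, so in the proof of the corollary I would simply cite that result; concatenating the three links then produces the four-term chain in the statement. The main point of interest in the present work is that, thanks to the previous two identities, one now has the combinatorial reformulation $a(G) = \max\{|\mathcal{C}(G;C)| : C \in \mathcal{C}(G)\}$, and this reformulation is exactly what makes it possible to reprove the identity $a(G) = \reg(S/I(G))$ internally, via the Betti splitting of Proposition \ref{Prop:BettiSplitVWCG}. That is the step that would require the most care if one wished a self-contained argument, since one must match the induced matching invariant $a(G)$ with the homological invariant produced by the mapping cone; this alternative proof is the content of Remark \ref{rem:pdGa(G)}.
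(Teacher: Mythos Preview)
Your proposal is correct and matches the paper's own proof essentially line for line: the paper cites \cite[Proposition 8.1.10]{JT} (which is precisely Terai's duality $\pd(I(G)^\vee)=\reg(S/I(G))$) for the middle link, then invokes \cite[Lemma 3.4]{MMCRTY2011} and Corollary~\ref{cor:res} for the remaining two, exactly as you outline. Your additional commentary on the role of Remark~\ref{rem:pdGa(G)} is also accurate.
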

\begin{proof} From \cite[Proposition 8.1.10]{JT}, $\pd(I(G)^\vee)=\reg(S/I(G))$. Thus the assertion follows from \cite[Lemma 3.4]{MMCRTY2011} and Corollary \ref{cor:res}. 
\end{proof}
\begin{Remark}\label{rem:pdGa(G)} \em The equality $a(G) = \reg(S/I(G))$ has been firstly proved in \cite[Lemma 3.4]{MMCRTY2011}. Here we give a new proof by Betti splittings. 
By \cite[Lemma 2.2]{Kat2006}, one always have $\reg(S/I(G))\ge a(G)$. It remains to prove that $\pd(I(G)^\vee)\le a(G)$. Assume Setup \ref{Setup:ResVeryWellCG}. 

By \cite[Corollary 2.2]{FHT2009} applied to the Betti splitting provided in Proposition \ref{Prop:BettiSplitVWCG}, and by Lemma \ref{Lem:J12intersect}, we have
	\begin{align*}
	\pd(I(G)^\vee)\ &=\ \max\big\{\pd(I(G_1)^\vee),\ \pd(I(G_2)^\vee),\ \pd(I(G_3)^\vee)+1\big\}.
	\end{align*}
	By induction on $n=|V(G)|/2$, we may assume that $\pd(I(G_i)^\vee)\le a(G_i)$ for $i=1,2,3$. Note that $a(G_i)\le a(G)$, for $i=1,2,3$, clearly. Thus it remains to prove the inequality $a(G_3)+1\le a(G)$. Let $D$ be a set of pairwise $3$--disjoint edges of $G_3$ with $|D|=a(G_3)$. Then $D\cup x_ny_n$ is again a set of pairwise $3$--disjoint edges of $G$. Indeed, if $e=z_kz_\ell\in D$, then $\{z_k,z_{\ell}\}\subseteq V(G_3)=V(G)\setminus\{x_i,y_i:x_i\in N[x_n]\cup N[y_n]\}$. Thus there can not be any edge connecting $z_k$ or $z_\ell$ with either $x_n$ or $y_n$. This shows that the induced subgraph with vertex set $\{z_k,z_\ell,x_n,y_n\}$ has only two edges. Hence $a(G)\ge|D|+1=a(G_3)+1$.
\end{Remark}

Let $M$ be a finitely generated $S$-module of dimension $d$, and let $P_M$ be the Hilbert polynomial of $M$ \cite{BH, JT}. Then, $P_M(t)=\sum_{i=0}^{d-1}(-1)^{d-1-i}e_{d-1-i}\binom{t+i}{i}$, $e_{d-1-i}\in \QQ$, for all $i$. We define the \textit{multiplicity} of $M$ as
$$
e(M)\ =\ \begin{cases}
\hfil e_0&\text{if}\ d>0,\\
\textup{length}(M)&\text{if}\ d=0.
\end{cases}
$$

Now, we verify that $I(G)^\vee$ has the \textit{alternating sum property} and thanks to Corollary \ref{cor:res} we get a formula for the \emph{multiplicity} of $S/I(G)^\vee$, $e(S/I(G)^\vee)$.

We quote next definition from \cite[Definition 4.1]{Zheng2004}.

\begin{Definition}\rm Let $I$ be a monomial ideal in $S$ with $G(I)=\{u_1,\ldots,u_m\}$ and let $d=\min\{\deg(u_i):i=1,\ldots,m\}$. We say that $I$ has the \textit{alternating sum property}, if
$$
\sum_{i\ge 1}(-1)^i\beta_{i, i+j}(S/I)=\begin{cases}
		-1,& \mbox{for $j=d$,}\\
		0, &\mbox{for $j>d$.}
				\end{cases}
$$
\end{Definition}
\begin{Proposition}\label{prop:summult}
	Let $G$ be a Cohen--Macaulay very well--covered graph. Then
	\begin{enumerate}[label=\textup{(\alph*)}]
		\item $\sum_{i\ge1}(-1)^{i}\beta_i(S/I(G)^\vee)= -1$.
		\item $e(S/I(G)^\vee)=|E(G)|=\frac{1}{2}\sum_{C\in\mathcal{C}(G)}\sum_{i=0}^{|\mathcal{C}(G;C)|}(-1)^{i+1}\binom{|\mathcal{C}(G;C)|}{i}(n+i)^2$.
	\end{enumerate}
\end{Proposition}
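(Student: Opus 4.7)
The plan is to exploit Corollary \ref{cor:res}, which gives the Betti numbers of $I(G)^\vee$ in terms of the counts $|\mathcal{C}(G;C)|$, and reduce each identity to a standard binomial identity together with one combinatorial observation about the covers of $G$.

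For part (a), I would rewrite the sum on $S/I(G)^\vee$ in terms of $I(G)^\vee$ via $\beta_i(S/I(G)^\vee)=\beta_{i-1}(I(G)^\vee)$ for $i\geq 1$, so that the claim becomes $\sum_{i\ge 0}(-1)^i\beta_i(I(G)^\vee)=1$. Substituting Corollary \ref{cor:res} and interchanging the order of summation gives
\[
\sum_{i\ge 0}(-1)^i\beta_i(I(G)^\vee)\ =\ \sum_{C\in\mathcal{C}(G)}\sum_{i\ge 0}(-1)^i\binom{|\mathcal{C}(G;C)|}{i}\ =\ \#\{C\in\mathcal{C}(G):\mathcal{C}(G;C)=\emptyset\},
\]
since $\sum_i(-1)^i\binom{k}{i}$ equals $1$ if $k=0$ and $0$ otherwise. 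It therefore suffices to show that $X=\{x_1,\dots,x_n\}$ is the unique cover with empty $\mathcal{C}(G;C)$.

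The core combinatorial step is then as follows. Clearly $\mathcal{C}(G;X)=\emptyset$, since $X$ contains no $y_s$. Conversely, assume $C\neq X$ and let $s$ be the smallest index with $y_s\in C$. I claim $x_s\in\mathcal{C}(G;C)$, that is $C'=(C\setminus y_s)\cup x_s\in\mathcal{C}(G)$. Every edge not incident to $y_s$ remains covered; for an edge $x_iy_s\in E(G)$, Characterization \ref{char:veryWellCGCM}(iii) forces $i\le s$, and either $i=s$ (covered by the new $x_s$) or $i<s$, in which case the minimality of $s$ gives $y_i\notin C$, hence $x_i\in C\subset C'$ (since each pair $\{x_i,y_i\}$ meets $C$ in exactly one vertex by unmixedness). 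Since $|C'|=n$, $C'$ is a minimal vertex cover, completing the claim and part (a).

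For part (b), the first equality is a standard Stanley–Reisner/squarefree computation: the primary decomposition $I(G)^\vee=\bigcap_{\{x_i,x_j\}\in E(G)}(x_i,x_j)$ exhibits all minimal primes as height-$2$ edge primes, so $\dim S/I(G)^\vee=2n-2$ and the multiplicity equals the number of minimal primes of maximal dimension, namely $|E(G)|$. For the second equality I would work with the Hilbert series. Because $I(G)^\vee$ has an $n$-linear resolution, the $K$-polynomial of $S/I(G)^\vee$ is
\[
P(t)\ =\ 1-\sum_{i\ge 0}(-1)^i\beta_i(I(G)^\vee)\,t^{n+i},
\]
and since $\dim S/I(G)^\vee=2n-2$, the factor $(1-t)^2$ divides $P(t)$, giving $P(1)=P'(1)=0$ and $e(S/I(G)^\vee)=P''(1)/2$. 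A direct differentiation yields $P''(1)=\sum_i(-1)^{i+1}\beta_i(I(G)^\vee)(n+i)(n+i-1)$; adding the vanishing quantity $P'(1)=-\sum_i(-1)^{i+1}\beta_i(I(G)^\vee)(n+i)$ converts $(n+i)(n+i-1)$ into $(n+i)^2$, and substituting Corollary \ref{cor:res} produces the stated double sum.

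The main obstacle is the uniqueness of $X$ in part (a): this is the only step that genuinely uses the Cohen–Macaulay very well–covered structure (via Characterization \ref{char:veryWellCGCM}(iii) and the one-vertex-per-pair property in each minimal cover). Everything else in the proof reduces to standard binomial identities and elementary Hilbert series manipulations.
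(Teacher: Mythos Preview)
Your proof is correct, but for part (a) it takes a genuinely different route from the paper. The paper proves (a) by strong induction on $n=|V(G)|/2$, using the Betti splitting of Proposition~\ref{Prop:BettiSplitVWCG} and Lemma~\ref{Lem:J12intersect} to reduce to the graphs $G_1,G_2,G_3$; your argument instead applies Corollary~\ref{cor:res} directly, reduces via the binomial identity to counting covers $C$ with $\mathcal{C}(G;C)=\emptyset$, and then gives a short combinatorial proof that $X$ is the only such cover. Your approach is more self-contained (it does not revisit the splitting machinery) and isolates a pleasant combinatorial fact; the paper's inductive argument, on the other hand, never needs to analyze $\mathcal{C}(G;C)$ explicitly and fits the overall Betti-splitting narrative of the article.

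For part (b) the two arguments are essentially the same in spirit: the paper invokes the Peskine--Szpiro multiplicity formula, while you derive the identical expression from the Hilbert series via $e=P''(1)/2$ and then add the vanishing $P'(1)$ to turn $(n+i)(n+i-1)$ into $(n+i)^2$. One small slip: with your convention $P(t)=1-\sum_i(-1)^i\beta_i(I(G)^\vee)t^{n+i}$ one has $P'(1)=\sum_i(-1)^{i+1}\beta_i(I(G)^\vee)(n+i)$, not its negative; since the quantity vanishes this does not affect the conclusion, but the sign should be corrected.
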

\begin{proof} Let $|V(G)|=2n$.

		(a). We proceed by induction on $n\ge1$. For $n=1,2$ the only Cohen--Macaulay very well--covered graphs are the following ones.\\
		\vspace*{-0.1cm}
		\begin{center}
			\begin{tikzpicture}[scale=0.8]
			\filldraw (-1,0) circle (2pt);
			\filldraw (-1,1.5) circle (2pt);
			\filldraw (3,0) circle (2pt);
			\filldraw (3,1.5) circle (2pt);
			\filldraw (4.2,0) circle (2pt);
			\filldraw (4.2,1.5) circle (2pt);
			\filldraw (6,0) circle (2pt);
			\filldraw (6,1.5) circle (2pt);
			\filldraw (7.2,0) circle (2pt);
			\filldraw (7.2,1.5) circle (2pt);
			\filldraw (9,0) circle (2pt);
			\filldraw (9,1.5) circle (2pt);
			\filldraw (10.2,0) circle (2pt);
			\filldraw (10.2,1.5) circle (2pt);
			\draw[-] (-1,0) -- (-1,1.5);
			\draw[-] (3,0) -- (3,1.5);
			\draw[-] (4.2,0) -- (4.2,1.5);
			\draw[-] (6,0) -- (6,1.5);
			\draw[-] (6,0) -- (7.2,1.5);
			\draw[-] (7.2,0) -- (7.2,1.5);
			\draw[-] (9,0) -- (9,1.5);
			\draw[-] (10.2,0) -- (10.2,1.5);
			\draw (9,0) to [bend right=45] (10.2,0);
			\end{tikzpicture}
		\end{center}\vspace*{-0.2cm}
		For each of these graphs, (a) holds. Let $n>2$. Using the same notation as in Setup \ref{Setup:ResVeryWellCG}, from Proposition \ref{Prop:BettiSplitVWCG}, Lemma \ref{Lem:J12intersect} 
		and by the inductive hypothesis on the graphs $G_1,G_2,G_3$, we have that
		\begin{align*}
		& \sum_{i\ge1}(-1)^{i}\beta_i(S/I(G)^\vee) = \sum_{i\ge0}(-1)^{i}\big[\beta_i(S/I(G_1)^\vee)+\beta_i(S/I(G_2)^\vee)+\beta_{i-1}(S/I(G_3)^\vee)\big]\\
		=&\ \sum_{i\ge0}(-1)^{i}\beta_i(S/I(G_1)^\vee)+\sum_{i\ge0}(-1)^{i}\beta_i(S/I(G_2)^\vee)-\sum_{i\ge0}(-1)^{i}\beta_{i}(S/I(G_3)^\vee)\ =\\
		=&\ \ -1-1+1\ =\ -1.
		\end{align*}
		(b). Since $I(G)^\vee=\bigcap_{z_iz_j\in E(G)}(z_i,z_j)$ we see that $I(G)^\vee$ has height two and, furthermore, $e(S/I(G)^\vee)=|E(G)|$ by \cite[Corollary 6.2.3]{JT}. The first equality holds true.
		
		On the other hand, by a formula of Peskine and Szpiro \cite{PS1974} (see also \cite[Corollary 6.1.7]{JT}), since $\beta_i(S/I(G)^\vee)=\beta_{i-1}(I(G)^\vee)$, one has that
		\begin{eqnarray*}
e(S/I(G)^\vee)&=&\frac{(-1)^h}{h!}\sum_{i=1}^{\pd(S/I(G)^\vee)}(-1)^{i+1}\sum_{j=1}^{\beta_i(S/I(G)^\vee)}a_{ij}^h=\\
&=&\frac{(-1)^h}{h!}\sum_{i=0}^{\pd(I(G)^\vee)}(-1)^{i+1}\sum_{j=1}^{\beta_i(I(G)^\vee)}a_{ij}^h,
\end{eqnarray*}
where $h=\alt(I(G)^\vee)$ and $a_{ij}$ are the shifts of the $i$th free module of the minimal free resolution $\FF$ of $S/I(G)^\vee$, \emph{i.e.}, 
			$\bigoplus_{j=1}^{\beta_i(S/I(G)^\vee)}S(-a_{ij})$. Since $I(G)^\vee$ has a $n$--linear resolution and $\alt(I(G)^\vee)=h=2$, from Corollary \ref{cor:res}, we have that
		\begin{align*}
		e(S/I(G)^\vee)\ &=\ \frac{1}{2}\sum_{i\ge0}(-1)^{i+1}\sum_{C\in\mathcal{C}(G)}\binom{|\mathcal{C}(G;C)|}{i}(n+i)^2\ =\\
		&=\ \frac{1}{2}\sum_{C\in\mathcal{C}(G)}\sum_{i=0}^{|\mathcal{C}(G;C)|}(-1)^{i+1}\binom{|\mathcal{C}(G;C)|}{i}(n+i)^2.\quad\quad\square
		\end{align*}

\end{proof}

\begin{Example} \em Let us consider the Cohen--Macaulay well--covered graph $G$ with $12$ vertices in Example \ref{expl:main}.
Set $C_1=\{x_1,x_2,x_3,x_4,x_5,x_6\}$, $C_2=\{y_1,x_2,x_3,x_4,x_5,x_6\}$, $C_3=\{x_1,y_2,x_3,x_4,x_5,x_6\}$, $C_4=\{x_1,x_2,x_3,x_4,y_5,x_6\}$, $C_5=\{x_1,x_2,x_3,x_4,x_5,y_6\}$, $C_6=\{x_1,x_2,x_3,x_4,y_5,y_6\}$, $C_7=\{x_1,x_2,x_3,y_4,y_5,y_6\}$ and $C_8=\{x_1,x_2,y_3,y_4,y_5,y_6\}$. Then
\begin{align*}
&\begin{array}{ccc}
\bottomrule[1.05pt]
\rowcolor{black!20}
\phantom{-}\mathcal{C}(G)\phantom{-}&\mathcal{C}(G;C)&\vert\mathcal{C}(G;C)\vert\\
\toprule[1.05pt]C_1&\emptyset&0\\
C_2&\{x_1\}&1\\
C_3&\{x_2\}&1\\
C_4&\{x_5\}&1\\
C_5&\{x_6\}&1\\
C_6&\{x_5, x_6\}&2\\
C_7&\{x_4\}&1\\
C_8&\{x_3\}&1\\
\toprule[1.05pt]
\end{array}
\end{align*}

Using \cite{GDS}, one can verify that $\beta_1(S/I(G)^\vee) =8$, $\beta_2(S/I(G)^\vee) =8$, $\beta_3(S/I(G)^\vee) =1$. Hence, $\sum_{i\ge1}(-1)^{i}\beta_i(S/I(G)^\vee)= -1$ and the alternating sum property holds. Moreover, $a(G) = \reg(S/I(G))= 2= \max\big\{\big|\mathcal{C}(G;C)\big|\ :\ C\in\mathcal{C}(G)\big\}=\pd(I(G)^\vee)$.

Finally, let us compute the multiplicity of $S/I(G)^\vee$. Firstly, we have $e(S/I(G)^\vee)=|E(G)|= 20$. On the other hand, from the previous table, one has
\begin{align*}
&\frac{1}{2}\sum_{C\in\mathcal{C}(G)}\sum_{i=0}^{|\mathcal{C}(G;C)|}(-1)^{i+1}\binom{|\mathcal{C}(G;C)|}{i}(6+i)^2=\\
&= -\frac{1}{2}\binom{|\mathcal{C}(G;C_1)|}{0}6^2+ \frac{1}{2}\sum_{C\in\mathcal{C}(G)\setminus (C_1\cup C_6)}\sum_{i=0}^{|\mathcal{C}(G;C)|}(-1)^{i+1}\binom{|\mathcal{C}(G;C)|}{i}(6+i)^2+\\
&+\frac{1}{2}\sum_{i=0}^{|\mathcal{C}(G;C_6)|}(-1)^{i+1}\binom{|\mathcal{C}(G;C_6)|}{i}(6+i)^2 =\\
&=\frac{1}{2}\Big[-\binom{0}{0}6^2+ 6\sum_{i=0}^{1}(-1)^{i+1}\binom{1}{i}(6+i)^2+\sum_{i=0}^{2}(-1)^{i+1}\binom{2}{i}(6+i)^2\Big]=\\
&=\frac{1}{2}(-36+78-2) =20
\end{align*}
and Proposition \ref{prop:summult} holds true.

\end{Example}

\section{Homological shifts of vertex cover ideals of very well--covered graphs and applications to Hibi ideals}\label{sec:4}

In this section, we investigate the homological shift ideals of powers of the vertex cover ideal of Cohen--Macaulay very well--covered graphs. Here is our main result.

In what follows we refer to the notation in Setup \ref{Setup:ResVeryWellCG}.

\begin{Theorem}\label{Thm:HSCMVWCG}
	Let $G$ be a Cohen--Macaulay very well--covered graph with $2n$ vertices. Then $\HS_k(I(G)^\vee)$ has linear quotients with respect to the lexicographic order $>_{\lex}$ induced by $x_n>y_n>x_{n-1}>y_{n-1}>\dots>x_1>y_1$, for all $k\ge0$.
\end{Theorem}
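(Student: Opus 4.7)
The plan is to proceed by strong induction on $n = |V(G)|/2$. The base case $n = 1$ is immediate since $I(G)^\vee = (x_1,y_1)$ and $\HS_k(I(G)^\vee) = (0)$ for $k \geq 1$. For the inductive step, I invoke the Betti splitting $J = I(G)^\vee = J_1 + J_2$ from Proposition \ref{Prop:BettiSplitVWCG} together with Proposition \ref{Prop:HSBettiSplitting} to obtain
\[
\HS_k(J) = \HS_{k-1}(J_1 \cap J_2) + \HS_k(J_1) + \HS_k(J_2).
\]
By Setup \ref{Setup:ResVeryWellCG} and Lemma \ref{Lem:J12intersect}, each summand has the form $m \cdot I(G_i)^\vee$ for $i \in \{1,2,3\}$, where $m$ is a squarefree monomial whose variables are disjoint from those appearing in $I(G_i)^\vee$. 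Since $\lcm(m u_i, m u_\ell)/(m u_\ell) = \lcm(u_i, u_\ell)/u_\ell$, multiplication by such a monomial preserves linear quotients with the same admissible order. Hence, by induction, each of the three summands has linear quotients with respect to the restricted lex order on the variables of the relevant smaller Cohen--Macaulay very well--covered graph.

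Following Construction \ref{Constr:MinimalFreeResVeryWellCG}, I partition the minimal generators of $\HS_k(J)$ into three blocks: \emph{Type C}, coming from $\HS_{k-1}(J_1 \cap J_2)$ (support contains both $x_n$ and $y_n$); \emph{Type A}, coming from $\HS_k(J_2)$ (support contains $x_n$ but not $y_n$); and \emph{Type B}, coming from $\HS_k(J_1)$ (support contains $y_n$ but not $x_n$). Since $x_n > y_n$ in the chosen lex order, comparison at the two largest variables gives Type C $>_{\lex}$ Type A $>_{\lex}$ Type B. I therefore order the generators of $\HS_k(J)$ in this block sequence, breaking ties inside each block by the inductive admissible order. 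To prove linear quotients it is enough, by \cite[Lemma 8.2.3]{JT}, to show that for every $j < \ell$ there exist $k < \ell$ and a variable $x_p$ with $\lcm(u_k, u_\ell)/u_\ell = x_p$ dividing $\lcm(u_j, u_\ell)/u_\ell$. For pairs lying in a common block the criterion is immediate from the inductive admissible order.

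The hard part (and the main obstacle) is the analysis of the inter-block transitions. For $u_\ell = u(C_\ell, \sigma_\ell)$ of Type A and a Type C $u_j > u_\ell$, one has $y_n \mid \lcm(u_j, u_\ell)/u_\ell$, and I produce a Type C witness of the form
\[
u_k = u\bigl((C_\ell \setminus \{x_n, y_r\}) \cup \{y_n, x_r\},\,(\sigma_\ell \setminus x_r) \cup x_n\bigr)
\]
for a suitable $x_r \in \sigma_\ell$, so that $\lcm(u_k, u_\ell)/u_\ell = y_n$. For $u_\ell$ of Type B and $u_j$ of Type A or C, one has $x_n \mid \lcm(u_j, u_\ell)/u_\ell$; using the forced inclusion $x_{i_s} \in C_\ell$ (required to cover each edge $x_{i_s} x_n$ because $x_n \notin C_\ell$), one similarly constructs either a Type C witness by a compound flip or identifies that the needed colon variable is already realised inside the Type B block via the induction on $G_1$. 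The central technical difficulty is that the naive single swap $x_n \leftrightarrow y_n$ in $C_\ell$ does not in general give a minimal vertex cover of $G$, as the $x_2 x_4$ edge in Example \ref{expl:charc} shows; only a compound two-entry modification (simultaneously swapping $y_r \leftrightarrow x_r$ for some $x_r \in \sigma_\ell$ and $x_n \leftrightarrow y_n$) succeeds, and the admissibility of such compound modifications is precisely what the transitivity condition (v) of Characterization \ref{char:veryWellCGCM} guarantees.
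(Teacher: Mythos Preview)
Your overall strategy matches the paper's: strong induction on $n$, the Betti splitting $\HS_k(J)=fg\cdot\HS_{k-1}(I(G_3)^\vee)+f\cdot\HS_k(I(G_2)^\vee)+g\cdot\HS_k(I(G_1)^\vee)$, and the block ordering Type~C $>_{\lex}$ Type~A $>_{\lex}$ Type~B. Where your argument breaks down is the inter-block step for a Type~A generator $u_\ell={\bf z}_{C_\ell}{\bf x}_{\sigma_\ell}$ against a preceding Type~C generator. You claim that a compound swap at $x_n$ and at some $x_r\in\sigma_\ell$ produces a Type~C witness $u_k$ with $\lcm(u_k,u_\ell)/u_\ell=y_n$. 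This is false in general: removing $x_n$ from $C_\ell$ uncovers every edge $x_{i_s}x_n$ with $x_{i_s}\in N[x_n]\setminus\{x_n,y_n\}$, and when two or more of these $x_{i_s}$ lie outside $C_\ell$ a single extra swap $y_r\to x_r$ cannot repair all of them. Concretely, take $G$ with $V=\{x_1,\dots,x_4,y_1,\dots,y_4\}$ and $E=\{x_iy_i:1\le i\le 4\}\cup\{x_1x_4,x_2x_4\}$ (Cohen--Macaulay very well--covered by Characterization~\ref{char:veryWellCGCM}). For $C_\ell=\{y_1,y_2,y_3,x_4\}$ and $\sigma_\ell=\{x_3\}$ (so $u_\ell=y_1y_2x_3y_3x_4\in G(\HS_1(J))$), the only candidate swap gives $C'=\{y_1,y_2,x_3,y_4\}$, which fails to cover $x_1x_4$. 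In fact one checks that \emph{no} generator $u_k\in G(\HS_1(J))$ satisfies $\lcm(u_k,u_\ell)/u_\ell=y_4$, so $y_4$ is not in the colon ideal at all; your witness simply does not exist. The transitivity condition~(v) of Characterization~\ref{char:veryWellCGCM} is vacuous here and cannot help.

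The paper's proof circumvents this by never aiming for $y_n$. Given $h=\lcm(u_j,u_\ell)/u_\ell$ of degree $>1$, it takes the \emph{smallest} index $i$ with $z_i\in\{x_i,y_i\}$ dividing $h$, proves (using minimality of $i$ and the cover structure) that $N[y_i]\setminus\{x_i,y_i\}$ is contained in both underlying covers, and then performs a \emph{single} swap $y_i\to x_i$ at that bottom index. This swap always preserves the cover property, yielding either the colon variable $x_i\mid h$ directly or a strictly shorter $h'$ on which one iterates. In the example above $i=1$, and the swap produces the Type~A witness $x_1y_2x_3y_3x_4$ giving $x_1$ as the colon variable. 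The moral is that the correct variable to extract comes from the bottom of the lex order, not from the distinguished top pair $\{x_n,y_n\}$.
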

\begin{proof}
	We proceed by strong induction on $n=|V(G)|/2\ge1$. For $n=1$, $I(G)=(x_1y_1)$ and $J=I(G)^\vee=(x_1,y_1)$. Hence, $\HS_0(J)=J$, $\HS_1(J)=(x_1y_1)$ have linear quotients with respect to $>_{\lex}$. 
	
	Let $n>1$. By Proposition \ref{Prop:BettiSplitVWCG}, 
	\begin{equation}\label{eq:BettiSplittingHSVeryWellCG}
	I(G)^\vee\ =\  x_{j_1}x_{j_2}\cdots x_{j_p}\cdot x_n I(G_2)^\vee + x_{i_1}x_{i_2}\cdots x_{i_t}\cdot y_n I(G_1)^\vee
	\end{equation}
	is a Betti splitting.
	
	Set $f=x_{j_1}\cdots x_{j_p}\cdot x_n$ and $g=x_{i_1}\cdots x_{i_t}\cdot y_n$. Since $\HS_k(wI)=w\cdot\HS_k(I)$ for all monomial ideals $I$ in $S$ and all non zero monomials $w\in S$, from (\ref{eq:BettiSplittingHSVeryWellCG}), Proposition \ref{Prop:HSBettiSplitting} and Lemma \ref{Lem:J12intersect}, 
	we have that
	\begin{align*}
	\HS_k(I(G)^\vee)\ =&\ fg\cdot\HS_{k-1}(I(G_3)^\vee)+f\cdot\HS_k(I(G_2)^\vee)+g\cdot\HS_{k}(I(G_1)^\vee).
	\end{align*}
	
	Note that, since $|V(G_j)|<|V(G)|$ ($j=1,2,3$), by the inductive hypothesis, $\HS_k(I(G_j)^\vee)$ have linear quotients with respect to $>_{\lex}$.
	
	Let 
	\begin{align}
	\label{eq:orderHSG_3}G(\HS_{k-1}(I(G_3)^\vee))\ &=\ \big\{w_1>_{\lex}w_2>_{\lex}\dots>_{\lex}w_q\big\},\\
	\label{eq:orderHSG_2}G(\HS_k(I(G_2)^\vee))\ &=\ \big\{v_1\,>_{\lex}\,v_2>_{\lex}\dots>_{\lex}v_r\big\},\\
	\label{eq:orderHSG_1}G(\HS_k(I(G_1)^\vee))\ &=\ \big\{u_1>_{\lex}u_2\!>_{\lex}\dots>_{\lex}u_s\big\},
	\end{align}
	thus  $G(\HS_k(I(G)^\vee))$ is ordered as follows:
	$$
	fgw_1>_{\lex}\dots>_{\lex}fgw_q\ \ >_{\lex}\ \ fv_1>_{\lex}\dots>_{\lex}fv_r\ \ >_{\lex}\ \  gu_1>_{\lex}\dots>_{\lex}gu_s.
	$$
	
	We prove that such an order is an admissible order of $\HS_k(I(G)^\vee)$. For our purpose, we need to show that all the following three colon ideals
	\begin{align}
\label{eq:colonfgw}&(fgw_1,\dots,fgw_{\ell-1}):fgw_{\ell}, \,\,\,\, \ell\in\{2,\dots,q\},\\
	\label{eq:colonfgfv}&\big(fg\HS_{k-1}(I(G_3)^\vee),fv_1,\dots,fv_{\ell-1}\big):fv_{\ell}, \,\,\,\, \ell\in\{1,\dots,r\},\\
	\label{eq:colonfgfgu}&\big(fg\HS_{k-1}(I(G_3)^\vee),f\HS_k(I(G_2)^\vee),gu_1,\dots,gu_{\ell-1}\big):gu_{\ell},\,\,\,\, \ell\in\{1,\dots,s\},
	\end{align}
	are generated by variables.
	\bigskip\\
	\textsc{First Colon Ideal.} 
	Let us consider the colon ideal in (\ref{eq:colonfgw}). Since, $$(fgw_1,\dots,fgw_{\ell-1}):fgw_{\ell}=(w_1,\dots,w_{\ell-1}):w_{\ell},$$ the assertion follows from the fact that $\HS_{k-1}(I(G_3)^\vee)$ has linear quotients with respect to the order in (\ref{eq:orderHSG_3}). 
		\medskip\\
	\textsc{Second Colon Ideal.} 
	Let us consider the colon ideal in (\ref{eq:colonfgfv}). \\Set $P= \big(fg\HS_{k-1}(I(G_3)^\vee),fv_1,\dots,fv_{\ell-1}\big):fv_{\ell}$. One can observe that 
		\[P= (fg\HS_{k-1}(I(G_3)^\vee)):fv_{\ell}+(v_1,\dots,v_{\ell-1}):v_{\ell},\]
	with $(v_1,\dots,v_{\ell-1}):v_{\ell}$ generated by variables. Indeed, $\HS_{k}(I(G_2)^\vee)$ has linear quotients with respect to the order in (\ref{eq:orderHSG_2}).\\
	 Thus, if we show that each generator of $(fg\HS_{k-1}(I(G_3)^\vee)):fv_{\ell}$ is divided by a variable of $P$, we conclude that $P$ is generated by variables, as wanted. The colon ideal $(fg\HS_{k-1}(I(G_3)^\vee)):fv_{\ell}$ is generated by the monomials
	$$
	\frac{\lcm(fgw_j,fv_{\ell})}{fv_{\ell}},\ \ \ j=1,\dots,q.
	$$
	Fix $j\in\{1,\dots,q\}$. By Construction \ref{Constr:MinimalFreeResVeryWellCG}, we have that
	\begin{align*}
	fgw_j\ &=\ w{\bf x}_{\sigma},&& w\in G(I(G)^\vee),\ y_n\in\supp(w),\ \sigma\subseteq\mathcal{C}(G;\supp(w)),\ x_n\in\sigma,\ |\sigma|=k,\\
	fv_{\ell}\ &=\ v{\bf x}_{\tau},&& v\in G(I(G)^\vee),\ x_n\in\supp(v),\ \tau\subseteq\mathcal{C}(G;\supp(v)),\ x_n\notin\tau,\ |\tau|=k.
	\end{align*}
	Let $h=\lcm(w{\bf x}_{\sigma},v{\bf x}_{\tau})/(v{\bf x}_{\tau})$. If $\deg(h)=1$, $h$ is a variable and there is nothing to prove. Suppose $\deg(h)>1$. Let us consider the following integer
	\begin{equation}\label{eq:integerI}
	i\ =\ \min\Big\{i\ :\ z_i\ \text{divides }h=\frac{\lcm(w{\bf x}_{\sigma},v{\bf x}_{\tau})}{v{\bf x}_{\tau}},\ z_i\in\{x_i,y_i\}\Big\}.
	\end{equation}
	Since $\deg(h)>1$ and $y_n$ divides $h$, we have that $i<n$. From Lemma \ref{Lem:minimalGeneratorsI(G)^vee}, we have that $y_i$ divides at least one of the monomials $w{\bf x}_{\sigma}$, $v{\bf x}_{\tau}$. Indeed, if $y_i$ does not divide any of these monomials, then $x_i$ will divide both these monomials,  and consequentially, $z_i\in\{x_i,y_i\}$ does not divide $h$. Against our assumption. \\
Now, let $N[y_i]=\{x_{k_1},\dots,x_{k_b},x_i,y_i\}$. We claim that $x_{k_1}\cdots x_{k_b}$ divides both monomials $w$, $v$. Indeed, $x_i$ divides at least one of the monomials $w$, $v$. For instance, say $x_i$ divides $w{\bf x}_{\sigma}$, then either $x_i$ divides $w$ or $x_i(w/y_i)\in G(I(G)^\vee)$. In both cases, $x_{k_1}\cdots x_{k_b}$ must divide $w$, since $w$ or $x_i(w/y_i)$ are minimal vertex covers. The same reasoning works if $x_i$ divides $v{\bf x}_{\tau}$. Thus $x_{k_1}\cdots x_{k_b}$ must divide $w$ or $v$. Without loss of generality, suppose that $x_{k_1}\cdots x_{k_b}$ divides $w$. Then, by Construction \ref{Constr:MinimalFreeResVeryWellCG}, none of the variables $y_{k_1},\dots,y_{k_b}$ divides $w$. By Characterization \ref{char:veryWellCGCM}(iii), $k_1<\dots<k_b<i$. Thus, by the meaning of $i$, $z_d\in\{x_d,y_d\}$ does not divide $h$ for all $d=k_1,\dots,k_b$. Hence, we see that $x_{k_1}\cdots x_{k_b}$ divides both the monomials $w$, $v$. Now, we distinguish two cases.
	\smallskip\\
	\textsc{Case 1.} Suppose $x_iy_i$ divides one between the monomials $w{\bf x}_{\sigma}$ and $v{\bf x}_{\tau}$, and that $y_i$ divides the other one. Suppose, for instance, that $x_iy_i$ divides $w{\bf x}_{\sigma}$ and that $y_i$ divides $v{\bf x}_{\tau}$. Then, $x_i(v/y_i){\bf x}_{\tau}>_{\lex}v{\bf x}_{\tau}$. Moreover, $x_i(v/y_i)\in G(I(G)^\vee)$, because $x_{k_1}\cdots x_{k_b}$ divides $v$ and so $\supp(x_i(v/y_i))$ is again a minimal vertex cover of $G$.
	\\ We claim that $x_i(v/y_i){\bf x}_{\tau}\in\HS_{k}(I(G)^\vee)$. Indeed for all $c\in\tau$, $N[y_c]\setminus\{x_c,y_c\}\subseteq\supp(v)$. Since $N[y_c]\setminus\{x_c,y_c\}$ is a subset of $X=\{x_1,\dots,x_n\}$ and $\supp(x_i(v/y_i))=(\supp(v)\setminus y_i)\cup x_i$, we see that $N[y_c]\setminus\{x_c,y_c\}$ is again a subset of $\supp(x_i(v/y_i))$. Thus, for all $c\in\tau$, $x_cx_i(v/y_i)/y_c\in G(I(G)^\vee)$, \emph{i.e.}, $\tau\subseteq\mathcal{C}(G;\supp(x_i(v/y_i)))$. Finally, $x_i(v/y_i){\bf x}_{\tau}\in\HS_{k}(I(G)^\vee)$. Moreover,
	$$
	\frac{\lcm(x_i(v/y_i){\bf x}_{\tau},v{\bf x}_{\tau})}{v{\bf x}_{\tau}}=x_i\ \ \text{and}\ \ x_i\ \ \text{divides}\ \ h.
	$$
	It follows that $h$ is divided by the variable $x_i$ belonging to $P$, as desired.
	\smallskip\\
	Otherwise, suppose $x_iy_i$ divides $v{\bf x}_{\tau}$ and $y_i$ divides $w{\bf x}_{\sigma}$. Then, $x_i(w/y_i){\bf x}_{\sigma}>_{\lex}w{\bf x}_{\sigma}$ and, as before, $x_i(w/y_i){\bf x}_{\sigma}\in G(\HS_k(I(G)^\vee))$. Moreover,
	$$
	h'=\frac{\lcm(x_i(w/y_i){\bf x}_{\sigma},v{\bf x}_{\tau})}{v{\bf x}_{\tau}}=h/x_i\ \ \text{divides}\ \ h
	$$
	and so $h'\in P$ and $\deg(h')<\deg(h)$. Hence, we can iterate the previous reasoning by considering the integer $i'$ arising from $h'$, as in formula (\ref{eq:integerI}). In such a situation it is $i'>i$.
	\smallskip\\
	\textsc{Case 2.} Suppose $x_i$ divides one of the monomials $w{\bf x}_{\sigma}$, $v{\bf x}_{\tau}$ and $y_i$ divides the other one. Suppose $x_i$ divides $w{\bf x}_{\sigma}$ and $y_i$ divides $v{\bf x}_{\tau}$. Then $x_i(v/y_i){\bf x}_{\tau}>_{\lex}v{\bf x}_{\tau}$, and arguing as before, one gets $x_i(v/y_i){\bf x}_{\tau}\in\HS_{k}(I(G)^\vee)$. Moreover,
	$$
	\frac{\lcm(x_i(v/y_i){\bf x}_{\tau},v{\bf x}_{\tau})}{v{\bf x}_{\tau}}=x_i\ \ \text{and}\ \ x_i\ \ \text{divides}\ \ h.
	$$
	Therefore, $h$ is divided by the variable $x_i$ belonging to $P$, as desired.
	\smallskip\\
	Otherwise, suppose $x_i$ divides $v{\bf x}_{\tau}$ and $y_i$ divides $w{\bf x}_{\sigma}$. It follows that $x_i(w/y_i){\bf x}_{\sigma}\in G(\HS_k(I(G)^\vee))$ and $x_i(w/y_i){\bf x}_{\sigma}>_{\lex}w{\bf x}_{\sigma}$. Moreover,
	$$
	h'=\frac{\lcm(x_i(w/y_i){\bf x}_{\sigma},v{\bf x}_{\tau})}{v{\bf x}_{\tau}}=h/x_i\ \ \text{divides}\ \ h.
	$$
	Thus $h'\in P$ and $\deg(h')<\deg(h)$.  In such a case, we iterate the reasoning made above by considering the integer $i'$ arising from $h'$ as in (\ref{eq:integerI}), and this time $i'>i$. It is clear that iterating the reasoning above we get that $P$ is generated by variables.
	\medskip\\
	\textsc{Third Colon Ideal.} Set $P\!=\! \big(fg\HS_{k-1}(I(G_3)^\vee),f\HS_k(I(G_2)^\vee),gu_1,\dots,gu_{\ell-1}\big)\!:\!gu_{\ell}$. One can observe that $P$ is generated by the monomials
	$$
	\frac{\lcm(fgw_i,gu_{\ell})}{gu_{\ell}},\ \ \frac{\lcm(fv_j,gu_{\ell})}{gu_{\ell}},\ \ \frac{\lcm(gu_h,gu_{\ell})}{gu_{\ell}},\ \ 
	$$
	with $i\in[q]$, $j\in[r]$ and $h\in[\ell-1]$. To prove that $P$ is generated by variables it is enough to show that any of the monomials above is divided by a variable $z_b\in P$. Note that $\HS_{k}(I(G_1)^\vee)$ has linear quotients with respect to the order in (\ref{eq:orderHSG_1}), thus the colon ideal $(gu_1,\dots,gu_{\ell-1}):gu_{\ell}=(u_1,\dots,u_{\ell-1}):u_{\ell}$ is generated by variables. Thus, $\lcm(gu_h,gu_{\ell})/(gu_{\ell})\in (u_1,\dots,u_{\ell-1}):u_{\ell}$ is divided by a variable of $P$. For the other two type of monomial generators it suffices to repeat the same argument as in the \textsc{Second Colon Ideal} case.
\end{proof}

As an immediate consequence we have the following classification.
\begin{Theorem}\label{Thm:ClassificationHomLin}
	Let $G$ be a very well--covered graph with $2n$ vertices. Then, the following conditions are equivalent:
	\begin{enumerate}[label=\textup{(\roman*)}]
		\item $G$ is Cohen--Macaulay.
		\item $I(G)^\vee$ has homological linear quotients.
		\item $I(G)^\vee$ has homological linear resolution.
	\end{enumerate}
\end{Theorem}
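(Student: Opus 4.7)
My plan is to close the loop with just three short implications, using Theorem \ref{Thm:HSCMVWCG} as the only substantive input. The core observation is that for a very well--covered graph the cover ideal is automatically equigenerated in degree $n$, so once one homological shift has linear quotients, we get the matching linear resolution for free, and the Eagon--Reiner criterion translates between Cohen--Macaulayness of $G$ and linearity of $I(G)^\vee$.

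First, I would establish $\text{(i)} \Rightarrow \text{(ii)}$: this is just a restatement of Theorem \ref{Thm:HSCMVWCG}, which gives an explicit admissible order (the lex order induced by $x_n>y_n>\cdots>x_1>y_1$) witnessing linear quotients of $\HS_k(I(G)^\vee)$ for every $k\ge0$.

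Next, $\text{(ii)} \Rightarrow \text{(iii)}$: specialising (ii) to $k=0$ says $I(G)^\vee$ has linear quotients. Because $G$ is very well--covered, every minimal vertex cover of $G$ has cardinality $n$, so $I(G)^\vee$ is generated in the single degree $n$, and linear quotients of an equigenerated monomial ideal force a linear resolution. By Criterion \ref{Crit:ER}, $I(G)=(I(G)^\vee)^\vee$ is then Cohen--Macaulay, so (i) holds and $I(G)^\vee$ has an $n$--linear resolution. Consequently every multigraded shift $\mathbf{a}_{k,j}$ of $I(G)^\vee$ has total degree $n+k$, i.e.\ $\HS_k(I(G)^\vee)$ is generated in the single degree $n+k$. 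Combining this with the linear quotients property granted by (ii) yields that each $\HS_k(I(G)^\vee)$ has a linear resolution, which is (iii).

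Finally, $\text{(iii)} \Rightarrow \text{(i)}$ is immediate: the condition in (iii) applied in homological degree $0$ says $\HS_0(I(G)^\vee) = I(G)^\vee$ has a linear resolution, so Criterion \ref{Crit:ER} again gives that $I(G)$ is Cohen--Macaulay, i.e.\ $G$ is Cohen--Macaulay. There is no real obstacle here: the only nontrivial ingredient is Theorem \ref{Thm:HSCMVWCG}, and everything else is the standard interplay between equigeneration, linear quotients, linear resolutions, and Alexander duality via Eagon--Reiner.
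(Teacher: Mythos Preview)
Your proposal is correct and matches the paper's intended approach: the paper presents this theorem as ``an immediate consequence'' of Theorem~\ref{Thm:HSCMVWCG} without spelling out a proof, and you have filled in exactly the routine implications the authors had in mind. One minor remark: in your (ii)$\Rightarrow$(iii) step the detour through Eagon--Reiner and (i) is unnecessary---once you know $I(G)^\vee$ is equigenerated with linear quotients it already has a linear resolution, which directly forces each $\HS_k(I(G)^\vee)$ to be equigenerated in degree $n+k$; but this redundancy is harmless.
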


Let $G$ be a finite simple graph with vertex set $V(G)=X=\{x_1,\dots,x_n\}$. The \textit{whisker graph} $G^*$ of $G$ is the graph with vertex set $V(G^*)=\{x_1,\dots,x_n\}\cup\{y_1,\dots,y_n\}$ and edge set $E(G^*)=E(G)\cup\{x_iy_i:i=1,\dots,n\}$. In other words, the whisker graph of $G$ is obtained by adding to each vertex $x_i$ a \textit{whisker}, that means that for each vertex $x_i\in V(G)$ we add a new vertex $y_i$ and an edge $x_iy_i$ connecting these two vertices. 

\begin{Corollary}\label{cor:HSwhisker}
	Let $G$ be any simple graph with $n$ vertices. Then the vertex cover ideal of the whisker graph $G^*$ of $G$ has homological linear quotients.
\end{Corollary}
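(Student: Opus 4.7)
The strategy is simply to show that $G^*$ falls under the hypothesis of Theorem \ref{Thm:HSCMVWCG}. Once we check that $G^*$ is a Cohen--Macaulay very well--covered graph, homological linear quotients of $I(G^*)^\vee$ follow immediately.

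First I would verify that $G^*$ is very well--covered on $2n$ vertices. Label $V(G^*)=\{x_1,\dots,x_n,y_1,\dots,y_n\}$ where $y_i$ is the added leaf adjacent to $x_i$. Since $Y=\{y_1,\dots,y_n\}$ consists only of leaves whose unique neighbor lies in $X=\{x_1,\dots,x_n\}$, the set $X$ is a vertex cover; hence $\alt(I(G^*))\leq n$. On the other hand, no vertex of $G^*$ is isolated (the whisker $x_iy_i$ is present at every $x_i$), so the inequality $2\alt(I(G^*))\geq|V(G^*)|=2n$ of \cite{GV} gives $\alt(I(G^*))=n$. Thus $G^*$ is very well--covered.

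Next I would check the five conditions \textup{(i)--(v)} of Characterization \ref{char:veryWellCGCM} for the natural labeling above. Condition \textup{(i)} is immediate since $X$ is a minimal vertex cover and $Y$ is a maximal independent set (the only edges incident to a $y_i$ are the whiskers). Condition \textup{(ii)} is by construction. For conditions \textup{(iii)} and \textup{(iv)}: the only edges of the form $x_iy_j$ in $G^*$ are precisely the whiskers $x_iy_i$, so $i=j$, which trivially gives $i\leq j$ and makes \textup{(iv)} vacuous (as there are no loops). Condition \textup{(v)} is also vacuous: the hypothesis $y_jx_k\in E(G^*)$ forces $j=k$, contradicting the requirement that $j$ and $k$ be distinct. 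Therefore $G^*$ is Cohen--Macaulay very well--covered.

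Finally, applying Theorem \ref{Thm:HSCMVWCG} to $G^*$ yields that $\HS_k(I(G^*)^\vee)$ has linear quotients for every $k\geq 0$, i.e., $I(G^*)^\vee$ has homological linear quotients. There is essentially no obstacle here; the only thing to be careful about is checking the labeling requirements of Characterization \ref{char:veryWellCGCM}, and these are immediate because every edge of $G^*$ joining $X$ to $Y$ is a whisker.
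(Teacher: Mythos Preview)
Your approach is exactly the paper's: verify that $G^*$ satisfies conditions (i)--(v) of Characterization~\ref{char:veryWellCGCM}, conclude it is Cohen--Macaulay very well--covered, and then invoke Theorem~\ref{Thm:HSCMVWCG}. One small technical point: the inequality $2\alt(I(G^*))\ge|V(G^*)|$ from \cite{GV}, as quoted in this paper, is stated for \emph{unmixed} graphs, which you have not yet established when you use it; it is cleaner to note directly that the whiskers $x_1y_1,\dots,x_ny_n$ form a matching of size $n$, so every vertex cover has at least $n$ elements and hence $\alt(I(G^*))=n$.
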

\begin{proof}
	It is easy to see that $G^*$ satisfies the conditions (i)--(v) of Characterization \ref{char:veryWellCGCM}. Thus $G^*$ is a Cohen--Macaulay very well--covered graph and the result follows from the previous theorem.
\end{proof}

Our experiments and the results above, suggest the following conjecture.
\begin{Conjecture}\label{Conj:PowersHSCMverywell}
	\rm Let $G$ be a Cohen--Macaulay very well--covered graph with 2n vertices. Then $\HS_k((I(G)^\vee)^\ell)$ has linear quotients with respect to the lexicographic order induced by $x_n>y_n>x_{n-1}>y_{n-1}>\dots>x_1>y_1$, for all $k\ge0$, and all $\ell\ge1$.
\end{Conjecture}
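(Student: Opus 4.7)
The plan is to generalize the bipartite strategy (Construction \ref{Constr:NewPosetP(ell)} and Theorem \ref{Thm:PosetEll}) from Hibi ideals to arbitrary Cohen--Macaulay very well--covered graphs. Concretely, I would attempt to associate to each Cohen--Macaulay very well--covered graph $G$ with $2n$ vertices and each integer $\ell\ge 1$ a new Cohen--Macaulay very well--covered graph $G(\ell)$ on vertex set $\{x_{i,a},y_{i,a}\,:\,i\in[n],\,a\in[\ell]\}$ such that
$$
\big((I(G)^\vee)^\ell\big)^\wp\ =\ I(G(\ell))^\vee.
$$
Once this polarization identity is in place, the conjecture follows by combining three ingredients already available: Theorem~\ref{Thm:HSCMVWCG} applied to $G(\ell)$ gives $\HS_k(I(G(\ell))^\vee)$ linear quotients for every $k$; Lemma~\ref{Lem:HSpolarization} identifies this ideal with $\HS_k((I(G)^\vee)^\ell)^\wp$; and Lemma~\ref{Lem:IwpSets} transfers the linear quotients property back through the polarization. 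A careful bookkeeping of how the lexicographic order $x_{n,\ell}>y_{n,\ell}>\cdots>x_{1,1}>y_{1,1}$ on $G(\ell)$ depolarizes should recover the order $x_n>y_n>\cdots>x_1>y_1$ claimed in the conjecture.

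For the construction of $G(\ell)$ itself, I would exploit Lemma~\ref{Lem:minimalGeneratorsI(G)^vee}: every minimal generator of $(I(G)^\vee)^\ell$ has the form $\prod_{k=1}^{\ell}\xb_{F_k}\yb_{[n]\setminus F_k}$ for some multiset $\{F_1,\dots,F_\ell\}\subseteq 2^{[n]}$. Upon polarizing, the variable $x_i$ occurring in the $a$th factor (after a suitable canonical ordering of the factors) becomes $x_{i,a}$, and similarly for $y_j$; reading off which pairs $(x_{i,a},y_{j,b})$ are ``forced to be an edge'' by the monomial generators of $(I(G)^\vee)^\ell$ then dictates the edge set of $G(\ell)$. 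Concretely, I expect the edges to be $x_{i,a}y_{i,a}$ for all $i,a$, together with edges $x_{i,a}y_{j,b}$, for $i<j$ and $x_iy_j\in E(G)$, whenever $(a,b)$ lies in an appropriate staircase region. Verifying that $G(\ell)$ satisfies conditions (i)--(v) of Characterization~\ref{char:veryWellCGCM} should reduce to a direct combinatorial check, with condition~(v) following from the corresponding condition on $G$ plus monotonicity of the staircase indices.

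The main obstacle is the polarization identity in full generality, where the lattice of poset ideals $\mathcal{J}(P)$ that makes the Hibi case clean is no longer available. Without it, the canonical ordering of the factors $\xb_{F_k}\yb_{[n]\setminus F_k}$ in a product must be produced by hand, using a linear extension of the partial order on $[n]$ arising from condition (iii) of Characterization~\ref{char:veryWellCGCM}, combined with an inclusion-refinement on the sets $F_k$; proving that this choice of ordering yields a squarefree monomial whose support is a minimal vertex cover of $G(\ell)$, and that \emph{every} minimal vertex cover of $G(\ell)$ arises this way, is the most delicate part. As a consistency check, the construction should specialize to Construction~\ref{Constr:NewPosetP(ell)} in the bipartite case, recovering Theorem~\ref{Thm:PosetEll}. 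Should this approach prove too rigid, a fallback is to develop a Betti splitting of $(I(G)^\vee)^\ell$ along the pair $\{x_n,y_n\}$ analogous to Proposition~\ref{Prop:BettiSplitVWCG} and run a double induction on $n$ and $\ell$ imitating the proof of Theorem~\ref{Thm:HSCMVWCG}; the obstacle there is that Betti splittings do not interact cleanly with taking powers, so verifying the splitting equality \eqref{eq:BettiSplittingI=P+Q} for $(I(G)^\vee)^\ell$ is expected to require a separate argument via Proposition~\ref{Prop:IBettiSplitLinRes} and linearity of powers.
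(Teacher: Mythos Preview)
The statement you are addressing is Conjecture~\ref{Conj:PowersHSCMverywell}, which the paper explicitly leaves open: immediately after stating it, the authors write that ``at present it seems too difficult to prove our conjecture in full generality'' and retreat to the bipartite subclass. There is therefore no proof in the paper to compare your proposal against; what you have written is a research plan for an open problem, not a reconstruction of an existing argument.

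Your overall strategy---build a graph $G(\ell)$ with $\big((I(G)^\vee)^\ell\big)^\wp=I(G(\ell))^\vee$, then invoke Theorem~\ref{Thm:HSCMVWCG}, Lemma~\ref{Lem:HSpolarization}, and Lemma~\ref{Lem:IwpSets}---is exactly the template the paper follows in the bipartite case via Construction~\ref{Constr:NewPosetP(ell)} and Theorem~\ref{Thm:PosetEll}. So the route is the right one to attempt; the question is whether the polarization identity can be established beyond the bipartite setting.

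There is, however, a concrete gap in your sketch of $G(\ell)$. You describe its edge set as consisting of the whiskers $x_{i,a}y_{i,a}$ together with edges $x_{i,a}y_{j,b}$ coming from $x_iy_j\in E(G)$. But a non-bipartite Cohen--Macaulay very well--covered graph can have edges of the form $x_ix_j$ (see, e.g., Example~\ref{expl:charc} or Example~\ref{expl:main}); these are precisely what distinguishes the general case from the Hibi/bipartite case. Your construction must also produce edges $x_{i,a}x_{j,b}$ in $G(\ell)$, and you must then verify conditions (iv) and (v) of Characterization~\ref{char:veryWellCGCM} for this larger edge set. Condition~(v) in particular mixes $x$--$x$ and $x$--$y$ edges, and checking it for the staircase indices is no longer a purely order-theoretic statement as it is for $\mathcal{J}(P(\ell))$. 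Likewise, the ``canonical ordering of the factors'' you invoke relies in the bipartite case on the chain decomposition of Lemma~\ref{lem:Hibi}, which comes from the distributive-lattice structure of $\mathcal{J}(P)$; for general $G$ the minimal vertex covers need not form a distributive lattice, so the uniqueness of the sorted factorization in $G(H_P^\ell)$ has no obvious analogue. These are the genuine obstacles the authors allude to, and your proposal does not yet indicate how to overcome them.
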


Note that our conjecture would also imply that $I(G)^\vee$ has linear powers.\\

At present it seems too difficult to prove our conjecture in full generality. Therefore we concentrate our attention on the subclass of Cohen--Macaulay \textit{bipartite} graphs. 

For this purpose, we need to recall what an \textit{Hibi ideal} is \cite{Hibi87}.

Let $(P,\succeq)$ be a finite partially ordered set (a \textit{poset}, for short) and set $P=\{p_1, \ldots, p_n\}$. A \textit{poset ideal} of $P$ is a subset $\mathcal{I}$ of $P$ such that if 
$p_i\in P$, $p_j\in\mathcal{I}$ and $p_i\preceq p_j$, then $p_i\in\mathcal{I}$ \cite{HH2005a}. To any poset ideal $\mathcal{I}$ of $P$, we associate the squarefree monomial 
$u_\mathcal{I}=(\prod_{p_i\in\mathcal{I}}x_i)(\prod_{p_i\in P\setminus\mathcal{I}}y_i)$ in the polynomial ring $S=K[x_1, \ldots, x_n, y_1, \ldots, y_n]$. Then the \textit{Hibi ideal} (associated to $P$) is the monomial ideal of $S$ defined as follows: 
$$
H_P\ =\ \big(u_\mathcal{I}\ :\ \mathcal{I}\ \text{is a poset ideal of}\ P\big).
$$

As an immediate consequence of Theorem \ref{Thm:HSCMVWCG} we have the next result.
\begin{Corollary}
	Let $(P,\succeq)$ be a finite poset. Then $H_P$ has homological linear quotients.
\end{Corollary}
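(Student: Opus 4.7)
The plan is to reduce the statement directly to Theorem \ref{Thm:HSCMVWCG} by invoking the structural correspondence between Hibi ideals and cover ideals of Cohen--Macaulay bipartite graphs recalled in the introduction.

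First, I would recall from \cite{HH2005a} (see also \cite[Lemma 9.1.9 and Theorem 9.1.13]{JT}) that the class of Hibi ideals coincides with the class of cover ideals of Cohen--Macaulay bipartite graphs. Concretely, given a finite poset $(P,\succeq)$ with $|P|=n$, one can produce a Cohen--Macaulay bipartite graph $G$ on the vertex set $\{x_1,\dots,x_n\}\cup\{y_1,\dots,y_n\}$, where the edges are prescribed by the order relations in $P$ together with the whiskers $x_iy_i$, such that $H_P = I(G)^\vee$ after identifying the variables $x_p, y_p$ for $p\in P$ with the labels $x_i,y_i$.

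Next, I would verify that any Cohen--Macaulay bipartite graph $G$ without isolated vertices is a Cohen--Macaulay very well--covered graph. Indeed, if $V(G)=X\sqcup Y$ is the bipartition, then absence of isolated vertices makes each of $X$ and $Y$ a vertex cover, while unmixedness (a consequence of Cohen--Macaulayness) forces both to have the same cardinality $n$, so $|V(G)|=2n=2\,\height(I(G))$. This matches the definition of a very well--covered graph, and Cohen--Macaulayness is inherited by hypothesis.

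Finally, applying Theorem \ref{Thm:HSCMVWCG} to $G$ yields that $\HS_k(I(G)^\vee)=\HS_k(H_P)$ has linear quotients (with respect to the lexicographic order induced by $x_n>y_n>\dots>x_1>y_1$) for every $k\ge 0$, which is exactly the assertion that $H_P$ has homological linear quotients. There is essentially no obstacle here beyond quoting the Herzog--Hibi correspondence; the statement is a direct corollary of Theorem \ref{Thm:HSCMVWCG}, and the only subtlety worth highlighting is ensuring the relabeling of variables required by Characterization \ref{char:veryWellCGCM} is compatible with the natural ordering of $P$, which can be arranged by choosing any linear extension of $(P,\succeq)$.
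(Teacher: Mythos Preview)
Your proposal is correct and follows essentially the same route as the paper: both reduce to Theorem \ref{Thm:HSCMVWCG} via the Herzog--Hibi correspondence identifying $H_P$ with the cover ideal of a Cohen--Macaulay bipartite graph (the paper cites \cite[Lemma 9.1.11]{JT} for this, you cite the equivalent \cite{HH2005a} and \cite[Lemma 9.1.9, Theorem 9.1.13]{JT}), and then observe that such a graph is very well--covered. Your explicit verification that a Cohen--Macaulay bipartite graph without isolated vertices is very well--covered is a welcome detail the paper leaves implicit.
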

\begin{proof}
	By \cite[Lemma 9.1.11]{JT}, the Alexander dual $H_P^\vee$ may be seen as the edge ideal of a Cohen--Macaulay bipartite graph $G_P$. In particular $G_P$ is a Cohen--Macaulay very well--covered graph. Hence, seeing $H_P$ as the cover ideal of $G_P$, the result follows immediately from Theorem \ref{Thm:HSCMVWCG}.
\end{proof}

Now, we turn to the powers of an Hibi ideal. We denote by $\mathcal{J}(P)$ the distributive lattice consisting of all poset ideals of $(P,\succeq)$ ordered by inclusion. 

From now on, with abuse of language but with the aim of simplifying the notation, we identify each $p_i\in P$ with the variable $x_i$, $i\in [n]$.  

\begin{Construction}\label{Constr:NewPosetP(ell)}
	\rm Let $(P,\succeq)$ be a finite poset. For any integer $\ell\ge1$, we construct a new poset $(P(\ell),\succeq_\ell)$ defined as follows:
		\begin{enumerate}
		\item[-] $P(\ell)\ =\ \big\{x_{i,1},x_{i,2},\dots,x_{i,\ell}\ :\ i=1,\dots,n\big\}$,
		\item[-] and $x_{i,r}\succeq_\ell x_{j,s}$ if and only if $x_i\succeq x_j$ and $r\ge s$.
	\end{enumerate}
\end{Construction}

By \cite[pag 99]{Hibi87} we have the following useful property.
\begin{Lemma}\label{lem:Hibi}
	Let $(P,\succeq)$ be a finite poset. Each minimal monomial generator of $H_P^\ell$ posses a unique expression of the form
	$$
	u_{\mathcal{I}_1}u_{\mathcal{I}_2}\cdots u_{\mathcal{I}_\ell},\ \ \ \text{with}\ \ \ \mathcal{I}_1\subseteq \mathcal{I}_2\subseteq\dots\subseteq \mathcal{I}_\ell,\ \ \ \mathcal{I}_i\in\mathcal{J}(P),\ \ \ i=1,\dots,\ell. 
	$$
\end{Lemma}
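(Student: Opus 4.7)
The plan is to prove the lemma in two moves: first establish a sorting identity that lets us rearrange any product of $\ell$ Hibi generators into a chain, then extract the chain directly from the multidegree of the resulting monomial, which gives both existence and uniqueness.

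The key algebraic fact is the identity
\begin{equation*}
u_{\mathcal{I}}\,u_{\mathcal{J}}\ =\ u_{\mathcal{I}\cap\mathcal{J}}\,u_{\mathcal{I}\cup\mathcal{J}}
\end{equation*}
for any two poset ideals $\mathcal{I},\mathcal{J}$ of $P$. I would verify this variable by variable: for each $p_i\in P$, the contribution of the pair $(x_i,y_i)$ to the left-hand side is $x_i^2$, $x_iy_i$, or $y_i^2$ according to whether $p_i$ lies in both, exactly one, or neither of $\mathcal{I},\mathcal{J}$, and the same count is obtained from the right-hand side using $\mathcal{I}\cap\mathcal{J}$ and $\mathcal{I}\cup\mathcal{J}$. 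The point is that $\mathcal{I}\cap\mathcal{J}$ and $\mathcal{I}\cup\mathcal{J}$ are again poset ideals, so the identity stays within $G(H_P)$.

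For \emph{existence}, let $u$ be a minimal generator of $H_P^\ell$. By definition $u=u_{\mathcal{J}_1}\cdots u_{\mathcal{J}_\ell}$ for some (not necessarily comparable) poset ideals $\mathcal{J}_1,\dots,\mathcal{J}_\ell$. For each $i$ set $m_i=|\{j:p_i\in\mathcal{J}_j\}|$, which is exactly $\deg_{x_i}(u)$. Define
\begin{equation*}
\mathcal{I}_k\ =\ \{p_i\in P\ :\ m_i\ge \ell-k+1\},\qquad k=1,\dots,\ell.
\end{equation*}
As $k$ grows the threshold $\ell-k+1$ decreases, so $\mathcal{I}_1\subseteq\mathcal{I}_2\subseteq\cdots\subseteq\mathcal{I}_\ell$. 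Each $\mathcal{I}_k$ is a poset ideal: if $p_i\in\mathcal{I}_k$ and $p_j\preceq p_i$, then $p_j$ belongs to every $\mathcal{J}_r$ that contains $p_i$ (because each $\mathcal{J}_r$ is a poset ideal), so $m_j\ge m_i\ge\ell-k+1$. Finally, $u_{\mathcal{I}_1}\cdots u_{\mathcal{I}_\ell}=u$: for each $i$,
\begin{equation*}
\deg_{x_i}\bigl(u_{\mathcal{I}_1}\cdots u_{\mathcal{I}_\ell}\bigr)\ =\ |\{k:p_i\in\mathcal{I}_k\}|\ =\ |\{k:k\ge\ell-m_i+1\}|\ =\ m_i,
\end{equation*}
and since each factor $u_{\mathcal{I}_k}$ and each $u_{\mathcal{J}_j}$ contributes exactly one of $x_i,y_i$, the $y_i$-degrees agree as well. (Alternatively, one could derive the same conclusion by iterating the sorting identity above, using the number of incomparable pairs among the $\mathcal{J}_j$'s as a strictly decreasing monovariant.)

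For \emph{uniqueness}, suppose $u_{\mathcal{I}_1}\cdots u_{\mathcal{I}_\ell}=u$ with $\mathcal{I}_1\subseteq\cdots\subseteq\mathcal{I}_\ell$. Then $\deg_{x_i}(u)=|\{k:p_i\in\mathcal{I}_k\}|$ and, because the $\mathcal{I}_k$ form an ascending chain, the set of indices $k$ with $p_i\in\mathcal{I}_k$ is a final segment of $[\ell]$. Hence $p_i\in\mathcal{I}_k\iff\deg_{x_i}(u)\ge\ell-k+1$, so the chain is forced by $u$ and coincides with the one constructed above. The main (only) subtlety is checking that the $\mathcal{I}_k$'s are genuinely poset ideals; this is immediate from the poset-ideal property of the $\mathcal{J}_j$'s, and no deeper combinatorics is needed.
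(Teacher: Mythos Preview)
Your proof is correct. The paper does not actually give its own proof of this lemma; it simply cites \cite[p.~99]{Hibi87} and states the result. What you have written is essentially the standard sorting/straightening argument for Hibi ideals (the identity $u_{\mathcal{I}}u_{\mathcal{J}}=u_{\mathcal{I}\cap\mathcal{J}}u_{\mathcal{I}\cup\mathcal{J}}$ is precisely the Hibi relation), together with the direct ``level-set'' construction of the chain from the multidegree, so your argument is in the same spirit as the original source.

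One organizational remark: your stated plan promises to use the sorting identity for existence, but your actual existence proof bypasses it entirely and builds the chain $\mathcal{I}_k=\{p_i:\deg_{x_i}(u)\ge\ell-k+1\}$ straight from the multidegree, relegating the sorting approach to a parenthetical alternative. Both routes work, but as written the opening paragraph and the identity are not needed; you could drop them without loss. Also, the final sentence (``the main subtlety is checking that the $\mathcal{I}_k$'s are poset ideals; this is immediate from the poset-ideal property of the $\mathcal{J}_j$'s'') belongs to the existence half, not uniqueness---in uniqueness the $\mathcal{I}_k$ are given as elements of $\mathcal{J}(P)$, so nothing needs checking there.
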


Now, we need the technique of polarization discussed in Subsection \ref{sub:2}. Let $(P,\succeq)$ be a finite poset and $\ell\ge1$ be a positive integer. Note that $X$ and $\emptyset$ are both poset ideals of $P$, thus both $u_{X}=x_1x_2\cdots x_n$ and $u_{\emptyset}=y_1y_2\cdots y_n$ belong to $H_P$. Thus, we have that $u_{X}^\ell=x_1^\ell x_2^\ell\cdots x_n^\ell$ and $u_\emptyset^\ell=y_1^\ell y_2^\ell\cdots y_n^\ell$ belong to $H_P^\ell$. Therefore the polynomial ring in which $H_P^\ell$ lives is
$$
R=K[x_{i,j},y_{i,j}\ :\ i=1,\dots,n,\ j=1,\dots,\ell].
$$

In order to preserve the structure of Hibi ideals, we innocuously \emph{modify} polarization. More precisely,
let $1\le k\le\ell$ and $i\in[n]$, then we set
\begin{align*}
(x_i^k)^\wp\ &=\ x_{i,1}x_{i,2}\cdots x_{i,\ell},\\
(y_i^k)^\wp\ &=\ y_{i,\ell}y_{i,\ell-1}\cdots y_{i,\ell+1-k},
\end{align*}
and extend the polarization of an arbitrary monomial in the obvious way. In other words, with respect to the usual polarization, we are just applying the relabeling of the variables $y_{i,j}\mapsto y_{i,\ell+1-j}$ for $i=1,\dots,n$ and $j=1,\dots,\ell$. 

\begin{Example}
	\rm Consider the poset $(P,\succeq)$ with $P=\{x_1,x_2,x_3\}$, 
	$x_3\succ x_1$ and $x_3\succ x_2$. The poset $(P,\succeq)$ and the distributive lattice $\mathcal{J}(P)$ are depicted below:
	\begin{center}
		\begin{tikzpicture}[scale=0.5]
		\filldraw (0,0) circle (2pt) node[below] {\footnotesize$x_1$};
		\filldraw (1.3,1.4) circle (2pt) node[above] {\footnotesize$x_3$};
		\filldraw (2.6,0) circle (2pt) node[below] {\footnotesize$x_2$};
		\draw[-] (0,0) -- (1.3,1.4) -- (2.6,0);
		\filldraw (6.7,0.8) circle (2pt) node[left] {\scriptsize$\{x_1\}$};
		\filldraw (8.3,2.4) circle (2pt) node[right] {\scriptsize$\{x_1,x_2\}$};
		\filldraw (8.3,4.2) circle (2pt) node[above] {\scriptsize$\{x_1,x_2,x_3\}$};
		\filldraw (9.9,0.8) circle (2pt) node[right] {\scriptsize$\{x_2\}$};
		\filldraw (8.3,-0.8) circle (2pt) node[below] {\footnotesize$\emptyset$};
		\draw[-] (6.7,0.8) -- (8.3,2.4) -- (9.9,0.8) -- (8.3,-0.8) -- (6.7,0.8);
		\draw[-] (8.3,2.4) -- (8.3,4.2);
		\filldraw (1.3,-1.7) node[below] {\small$(P,\succeq)$};
		\filldraw (8.3,-1.7) node[below] {\small$\mathcal{J}(P)$};
		\end{tikzpicture}
	\end{center}
	
The poset $(P(2),\succeq_2)$ and the distributive lattice $\mathcal{J}(P(2))$ are the following ones: \medskip
\begin{center}
	\begin{tikzpicture}[scale=0.65]
	\filldraw (1,-1) circle (2pt) node[below] {\footnotesize$x_{1,1}$};
	\filldraw (7,-1) circle (2pt) node[below] {\footnotesize$x_{2,1}$};
	\filldraw (3.5,1) circle (2pt) node[left] {\footnotesize$x_{1,2}$};
	\filldraw (4.5,1) circle (2pt) node[right] {\footnotesize$x_{2,2}$};
	\filldraw (4,0.5) circle (2pt) node[below,yshift=-1.8pt] {\footnotesize$x_{3,1}$};
	\filldraw (4,3.7) circle (2pt) node[above] {\footnotesize$x_{3,2}$};
	\draw[-] (1,-1) -- (3.5,1);
	\draw[-] (4,0.5) -- (7,-1);
	\draw[-] (7,-1) -- (4.5,1);
	\draw[-] (3.5,1) -- (4,3.7);
	\draw[-] (4.5,1) -- (4,3.7);
	\draw[-] (1,-1) -- (4,0.5);
	\draw[-] (4,0.5) -- (4,3.7);
	\filldraw (4,-3.7) node[below] {\small$(P(2),\succeq_2)$};
	\filldraw (15.2,6.8) circle (2pt) node[above] {\scriptsize$\{x_{1,1},x_{1,2},x_{2,1},x_{2,2},x_{3,1},x_{3,2}\}$};
	\filldraw (15.2,5.2) circle (2pt) node[above] {\scriptsize$\{x_{1,1},x_{1,2},x_{2,1},x_{2,2},x_{3,1}\}$};
	\filldraw (12,3.6) circle (2pt) node[left] {\scriptsize$\{x_{1,1},x_{1,2},x_{2,1},x_{3,1}\}$};
	\filldraw (18.4,3.6) circle (2pt) node[right] {\scriptsize$\{x_{1,1},x_{2,1},x_{2,2},x_{3,1}\}$};
	\filldraw (15.2,3.6) circle (2pt) node[above,yshift=-4.5] {\scriptsize$\{x_{1,1},x_{1,2},x_{2,1},x_{2,2}\}$};
	\filldraw (12,2) circle (2pt) node[left] {\scriptsize$\{x_{1,1},x_{1,2},x_{2,1}\}$};
	\filldraw (18.4,2) circle (2pt) node[right] {\scriptsize$\{x_{1,1},x_{2,1},x_{2,2}\}$};
	\filldraw (15.2,2) circle (2pt) node[below] {\scriptsize$\{x_{1,1},x_{2,1},x_{3,1}\}$};
	\filldraw (12,0.4) circle (2pt) node[left] {\scriptsize$\{x_{1,1},x_{1,2}\}$};
	\filldraw (15.2,0.4) circle (2pt) node[below,yshift=-4.5] {\scriptsize$\{x_{1,1},x_{2,1}\}$};
	\filldraw (18.4,0.4) circle (2pt) node[right] {\scriptsize$\{x_{2,1},x_{2,2}\}$};
	\filldraw (12,-1.2) circle (2pt) node[left] {\scriptsize$\{x_{1,1}\}$};
	\filldraw (18.4,-1.2) circle (2pt) node[right] {\scriptsize$\{x_{2,1}\}$};
	\filldraw (15.2,-2.8) circle (2pt) node[below] {\scriptsize$\emptyset$};
	\draw[-] (15.2,-2.8) -- (18.4,-1.2) -- (18.4,3.6) -- (15.2,5.2) -- (12,3.6) -- (12,-1.2) -- (15.2,-2.8);
	\draw[-] (12,2) -- (18.4,-1.2);
	\draw[-] (12,3.6) -- (15.2,2) -- (18.4,3.6);
	\draw[-] (12,-1.2) -- (18.4,2) -- (15.2,3.6) -- (12,2);
	\draw[-] (15.2,6.8) -- (15.2,3.6);
	\draw[-] (15.2,0.4) -- (15.2,2);
	\filldraw (15.2,-3.7) node[below] {\small$\mathcal{J}(P(2))$};
	\end{tikzpicture}
\end{center}\vspace*{-0.4cm}
	We have that
	
	\begin{align*}
	H_P\ &=\ (x_1x_2x_3,\ x_1x_2y_3,\ x_1y_2y_3,\ y_1x_2y_3,\ y_1y_2y_3),\\[5pt]
	H_{P(2)}\ & =\ \big(x_{1,1}x_{1,2}x_{2,1}x_{2,2}x_{3,1}x_{3,2},\ x_{1,1}x_{1,2}x_{2,1}x_{2,2}x_{3,1}y_{3,2},\ x_{1,1}x_{1,2}x_{2,1}x_{2,2}y_{3,1}y_{3,2},\\
	&\phantom{ =\ \big(.}x_{1,1}x_{1,2}x_{2,1}y_{2,2}x_{3,1}y_{3,2},\ x_{1,1}y_{1,2}x_{2,1}y_{2,2}x_{3,1}y_{3,2},\ x_{1,1}y_{1,2}x_{2,1}x_{2,2}x_{3,1}y_{3,2},\\
	&\phantom{ =\ \big(.}x_{1,1}x_{1,2}x_{2,1}y_{2,2}y_{3,1}y_{3,2},\ x_{1,1}y_{1,2}x_{2,1}y_{2,2}y_{3,1}y_{3,2},\ x_{1,1}y_{1,2}x_{2,1}x_{2,2}y_{3,1}y_{3,2},\\
	&\phantom{ =\ \big(.}x_{1,1}x_{1,2}y_{2,1}y_{2,2}y_{3,1}y_{3,2},\ y_{1,1}y_{1,2}x_{2,1}x_{2,2}y_{3,1}y_{3,2},\ x_{1,1}y_{1,2}y_{2,1}y_{2,2}y_{3,1}y_{3,2},\\
	&\phantom{ =\ \big(.}y_{1,1}y_{1,2}x_{2,1}y_{2,2}y_{3,1}y_{3,2},\ y_{1,1}y_{1,2}y_{2,1}y_{2,2}y_{3,1}y_{3,2}\big).
	\end{align*}
	One can easily verify that $(H_P^2)^\wp=H_{P(2)}$ with respect to our modified polarization. For instance, consider $(x_1y_2y_3)(y_1x_2y_3)\in G(H_P^2)$. Then
	$$
	\big((x_1y_2y_3)(y_1x_2y_3)\big)^\wp\ =\ (x_1y_1x_2y_2y_3^2)^\wp\ =\ x_{1,1}y_{1,2}x_{2,1}y_{2,2}y_{3,1}y_{3,2}\ =\ u_{\mathcal{I}}\in G(H_{P(2)}),
	$$
	where $\mathcal{I}=\{x_{1,1},x_{2,1}\}\in\mathcal{J}(P(2))$.
\end{Example}

\begin{Theorem}\label{Thm:PosetEll}
	Let $(P,\succeq)$ be a finite poset. Then, for any $\ell\ge1$,
	$$
	(H_P^\ell)^\wp\ =\ H_{P(\ell)}.
	$$
\end{Theorem}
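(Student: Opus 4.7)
The plan is to exhibit an explicit bijection between the minimal generating sets $G((H_P^\ell)^\wp)$ and $G(H_{P(\ell)})$. By Lemma \ref{lem:Hibi}, every $w\in G(H_P^\ell)$ is uniquely of the form $w=u_{\mathcal{I}_1}u_{\mathcal{I}_2}\cdots u_{\mathcal{I}_\ell}$ with $\mathcal{I}_1\subseteq\mathcal{I}_2\subseteq\cdots\subseteq\mathcal{I}_\ell$ in $\mathcal{J}(P)$. For each $i\in[n]$ I would set $c_i=|\{k\in[\ell]:x_i\in\mathcal{I}_k\}|$, so that $\deg_{x_i}(w)=c_i$ and $\deg_{y_i}(w)=\ell-c_i$. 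The modified polarization then yields
$$
w^\wp\ =\ \prod_{i=1}^{n}\bigl(x_{i,1}x_{i,2}\cdots x_{i,c_i}\bigr)\bigl(y_{i,c_i+1}y_{i,c_i+2}\cdots y_{i,\ell}\bigr).
$$

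The heart of the argument is the \emph{forward inclusion} $(H_P^\ell)^\wp\subseteq H_{P(\ell)}$. I would attach to $w$ the subset $\mathcal{I}^\ast=\{x_{i,j}\in P(\ell):1\le j\le c_i\}$ of $P(\ell)$, and observe that by construction $w^\wp=u_{\mathcal{I}^\ast}$ in the Hibi sense, provided $\mathcal{I}^\ast\in\mathcal{J}(P(\ell))$. To check the latter, the essential monotonicity to prove is
$$
x_{i'}\preceq x_i\ \Longrightarrow\ c_{i'}\ge c_i,
$$
which is immediate from the fact that each $\mathcal{I}_k$ is a poset ideal of $P$: any $k$ with $x_i\in\mathcal{I}_k$ forces $x_{i'}\in\mathcal{I}_k$. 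Given this, if $x_{i',j'}\preceq_\ell x_{i,j}$ and $x_{i,j}\in\mathcal{I}^\ast$, then $j'\le j\le c_i\le c_{i'}$, so $x_{i',j'}\in\mathcal{I}^\ast$, verifying the poset-ideal axiom in $P(\ell)$.

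For the \emph{reverse inclusion} $H_{P(\ell)}\subseteq (H_P^\ell)^\wp$, I would start with an arbitrary $\mathcal{I}\in\mathcal{J}(P(\ell))$ and put $c_i=|\{j:x_{i,j}\in\mathcal{I}\}|$. Because $x_{i,j'}\preceq_\ell x_{i,j}$ whenever $j'\le j$, each fiber $\{j:x_{i,j}\in\mathcal{I}\}$ is downward closed in $[\ell]$, giving $\mathcal{I}=\{x_{i,j}:j\le c_i\}$. Then I would define $\mathcal{I}_k=\{x_i\in P:x_{i,\ell-k+1}\in\mathcal{I}\}=\{x_i:c_i\ge\ell-k+1\}$ for $k=1,\dots,\ell$. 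The relation $x_{i'}\preceq x_i$ gives $x_{i',\ell-k+1}\preceq_\ell x_{i,\ell-k+1}$, so each $\mathcal{I}_k$ is a poset ideal of $P$; and $\mathcal{I}_1\subseteq\cdots\subseteq\mathcal{I}_\ell$ is transparent from the definition. Running the computation of the first paragraph in reverse shows that $u_{\mathcal{I}_1}\cdots u_{\mathcal{I}_\ell}\in G(H_P^\ell)$ polarizes to $u_\mathcal{I}$. Combined with the forward inclusion this proves $(H_P^\ell)^\wp=H_{P(\ell)}$.

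The only real obstacle is the monotonicity $c_{i'}\ge c_i$ under $x_{i'}\preceq x_i$ and its mirror image on the $P(\ell)$ side; once this combinatorial bridge is set up, both directions reduce to a routine bookkeeping that the assignment $w\leftrightarrow\mathcal{I}^\ast$ preserves exponents in every variable $x_{i,j}$ and $y_{i,j}$.
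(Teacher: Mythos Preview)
Your proof is correct and follows essentially the same approach as the paper's: both set up the bijection between chains $\mathcal{I}_1\subseteq\cdots\subseteq\mathcal{I}_\ell$ in $\mathcal{J}(P)$ and poset ideals of $P(\ell)$ via the counts $c_i=|\{k:x_i\in\mathcal{I}_k\}|$, and both hinge on the monotonicity $x_{i'}\preceq x_i\Rightarrow c_{i'}\ge c_i$ together with the inverse assignment $\mathcal{I}_k=\{x_i:x_{i,\ell+1-k}\in\mathcal{I}\}$. Your explicit remark that each fiber $\{j:x_{i,j}\in\mathcal{I}\}$ is downward closed in $[\ell]$ is a helpful clarification the paper leaves implicit.
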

\begin{proof}
	By Lemma \ref{lem:Hibi}, $H_P^\ell$ is minimally generated by the monomials
	$$
	u_{\mathcal{I}_1}u_{\mathcal{I}_2}\cdots u_{\mathcal{I}_\ell},\ \ \ \text{with}\ \ \ \mathcal{I}_1\subseteq \mathcal{I}_2\subseteq\dots\subseteq \mathcal{I}_\ell,\ \ \ \mathcal{I}_j\in\mathcal{J}(P),\ \ \ j=1,\dots,\ell,
	$$
	while $H_{P(\ell)}\subset K\big[x_{i,j},y_{i,j}:i\in[n],j\in[\ell]\big]$ is generated by the squarefree monomials
	$$
	u_{\mathcal{I}}\ =\ \big(\prod_{x_{i,j}\in\mathcal{I}}x_{i,j}\big)\big(\prod_{x_{i,j}\in P(\ell)\setminus\mathcal{I}}y_{i,j}\big),\ \ \ \ \ \mathcal{I}\in\mathcal{J}(P(\ell)).
	$$
	Hence, to get the assertion, we must show that for all $u_{\mathcal{I}_1}u_{\mathcal{I}_2}\cdots u_{\mathcal{I}_\ell}\in G(H_P^\ell)$, the monomial $(u_{\mathcal{I}_1}u_{\mathcal{I}_2}\cdots u_{\mathcal{I}_\ell})^\wp$ is equal to $u_{\mathcal{I}}$, for some poset ideal $\mathcal{I}\in\mathcal{J}(P(\ell))$, and conversely, given any $u_{\mathcal{I}}\in G(H_{P(\ell)})$, with $\mathcal{I}\in\mathcal{J}(P(\ell))$, then there exist $\mathcal{I}_1\subseteq \mathcal{I}_2\subseteq\dots\subseteq \mathcal{I}_\ell$, $\mathcal{I}_i\in\mathcal{J}(P)$, $i=1,\dots,\ell$, such that $u_{\mathcal{I}}=(u_{\mathcal{I}_1}u_{\mathcal{I}_2}\cdots u_{\mathcal{I}_\ell})^\wp$.
	
	Let $u_{\mathcal{I}_1}u_{\mathcal{I}_2}\cdots u_{\mathcal{I}_\ell}\in G(H_P^\ell)$, with $\mathcal{I}_1\subseteq \mathcal{I}_2\subseteq\dots\subseteq \mathcal{I}_\ell$. Set $k_i=\big|\{r\ :\ x_i\in\mathcal{I}_r\}\big|$ and $m_i=\big|\{s\ :\ x_i\notin\mathcal{I}_s\}\big|$. Therefore, $k_i+m_i=\ell$ and
	$$
	(u_{\mathcal{I}_1}u_{\mathcal{I}_2}\cdots u_{\mathcal{I}_\ell})^\wp\ =\ \big(\prod_{x_i\in P}\prod_{r=1}^{k_i}x_{i,r}\big)\big(\prod_{x_i\in P}\prod_{s=k_i+1}^{\ell}y_{i,s}\big).
	$$
	We claim that $\mathcal{I}=\big\{x_{i,r}\ :\ r=1,\dots,k_i\big\}$ is a poset ideal of $P(\ell)$. From this, it will follow that $(u_{\mathcal{I}_1}u_{\mathcal{I}_2}\cdots u_{\mathcal{I}_\ell})^\wp=u_{\mathcal{I}}\in G(H_{P(\ell)})$, as wanted. Indeed, let $x_{i,r}\in\mathcal{I}$ and $x_{j,s}\in P(\ell)$ such that $x_{j,s}\preceq_\ell x_{i,r}$. We must prove that $x_{j,s}\in\mathcal{I}$. By definition of the order $\succeq_\ell$ we have $x_j\preceq x_i$ and $r\ge s$. If $j=i$, then also $x_{i,s}\in\mathcal{I}$,  by the polarization technique. Let $j\ne i$. We note that $k_j\ge k_i$. Indeed, for any $c$ such that $x_i\in\mathcal{I}_c$, one has that $x_j\in\mathcal{I}_c$, since $x_j\preceq x_i$. Thus, if $x_{i,r}$ divides $(u_{\mathcal{I}_1}u_{\mathcal{I}_2}\cdots u_{\mathcal{I}_\ell})^\wp$, then $x_{j,r}$ divides $(u_{\mathcal{I}_1}u_{\mathcal{I}_2}\cdots u_{\mathcal{I}_\ell})^\wp$. For any $d\le r$, $x_{j,d}$ divides $(u_{\mathcal{I}_1}u_{\mathcal{I}_2}\cdots u_{\mathcal{I}_\ell})^\wp$, by the polarization technique. Since $s\le r$, then $x_{j,s}$ divides $(u_{\mathcal{I}_1}u_{\mathcal{I}_2}\cdots u_{\mathcal{I}_\ell})^\wp$, as wanted.
	
	Conversely, let $\mathcal{I}\in\mathcal{J}(P(\ell))$ be a poset ideal and $u_{\mathcal{I}}\in G(H_{P(\ell)})$. Set
	$$
	\mathcal{I}_k=\big\{x_i\ :\ x_{i,\ell+1-k}\in\mathcal{I}\big\},\ \ \ \text{for}\ \ \ k=1,\dots,\ell.
	$$
	We claim that the sets $\mathcal{I}_j$ are poset ideals of $P$ and that $\mathcal{I}_1\subseteq \mathcal{I}_2\subseteq\dots\subseteq \mathcal{I}_\ell$. From this, it will follow that $u_{\mathcal{I}}=(u_{\mathcal{I}_1}u_{\mathcal{I}_2}\cdots u_{\mathcal{I}_\ell})^\wp\in G((H_P^\ell)^\wp)$, as wanted.
	
	 Fix $k\in[\ell]$ and let $x_i\in\mathcal{I}_k$ and $x_j\preceq x_i$. We must prove that $x_j\in\mathcal{I}_k$, too. Since $x_j\preceq x_i$, then $x_{j,\ell+1-k}\preceq_\ell x_{i,\ell+1-k}$ by definition of the order $\succeq_\ell$. Thus $x_{j,\ell+1-k}\in\mathcal{I}$ and consequently $x_j\in\mathcal{I}_k$, as wanted.

	Now, let $k\in[\ell]$ with $k<\ell$. We prove that $\mathcal{I}_{k}\subseteq\mathcal{I}_{k+1}$. As a consequence, it follows that $\mathcal{I}_1\subseteq \mathcal{I}_2\subseteq\dots\subseteq \mathcal{I}_\ell$, as desired. Let $x_{i}\in\mathcal{I}_k$, then $x_{i,\ell+1-k}\in\mathcal{I}$. Since $\ell+1-k>\ell+1-(k+1)$, we get that $x_{i,\ell+1-k}\succeq_\ell x_{i,\ell+1-(k+1)}$ and so $x_{i,\ell+1-(k+1)}\in\mathcal{I}$, too. Thus $x_i\in\mathcal{I}_{k+1}$. The assertion follows.
	\end{proof}

The following key lemma proved by Olteanu \cite[Proposition 5.3]{Olteanu2009} follows by a more general result stated by Jahan \cite[Lemma 3.3]{Jahan2007}.
\begin{Lemma}\label{Lem:IwpSets}
	Let $I\subset S$ be a monomial ideal. Then $I$ has linear quotients with admissible order $u_1>u_2>\dots>u_m$ of $G(I)$ if and only if $I^\wp$ has linear quotients with admissible order $u_1^\wp>u_2^\wp>\dots>u_m^\wp$ of $G(I^\wp)$.
\end{Lemma}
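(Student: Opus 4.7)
My plan is to verify the combinatorial criterion for admissible orders recalled in Subsection \ref{sub:2} from \cite[Lemma 8.2.3]{JT} on both sides of the polarization correspondence. Recall that $u_1 > \cdots > u_m$ is admissible exactly when for every pair $j < \ell$ there exist $k < \ell$ and a variable $x_p$ with $\lcm(u_k,u_\ell)/u_\ell = x_p$ and $x_p \mid \lcm(u_j,u_\ell)/u_\ell$. I would then translate this statement along polarization. The key underlying fact I would establish first is that polarization commutes with least common multiples: $\lcm(u,v)^\wp = \lcm(u^\wp, v^\wp)$ for any monomials $u,v \in S$. This follows by a direct $x_i$-degree comparison, using that $(x_i^{a_i})^\wp = x_{i,1} \cdots x_{i,a_i}$ and that the union of the sets $\{x_{i,1},\dots,x_{i,a_i}\}$ and $\{x_{i,1},\dots,x_{i,b_i}\}$ is $\{x_{i,1},\dots,x_{i,\max(a_i,b_i)}\}$.

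For the forward direction, given $j<\ell$, I would apply the criterion to obtain $k<\ell$ and $x_p$ with $\lcm(u_k,u_\ell)/u_\ell = x_p$ and $x_p \mid \lcm(u_j,u_\ell)/u_\ell$. Setting $q = \deg_{x_p}(u_\ell) + 1$, the first equation translates to $\deg_{x_p}(u_k) = q$ and $\deg_{x_i}(u_k) \le \deg_{x_i}(u_\ell)$ for $i \ne p$; passing through the commutation identity one obtains $\lcm(u_k^\wp, u_\ell^\wp)/u_\ell^\wp = x_{p,q}$. From $x_p \mid \lcm(u_j,u_\ell)/u_\ell$ I read $\deg_{x_p}(u_j) \ge q$, and thus $x_{p,q} \mid u_j^\wp$ while $x_{p,q} \nmid u_\ell^\wp$, giving $x_{p,q} \mid \lcm(u_j^\wp, u_\ell^\wp)/u_\ell^\wp$, as required.

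The converse is where the only real subtlety lies, and it is the step I expect to be the main (though purely bookkeeping) obstacle. Assuming admissibility of $u_1^\wp > \cdots > u_m^\wp$, for $j<\ell$ one obtains $k<\ell$ and a variable $x_{p,q}$ satisfying $\lcm(u_k^\wp, u_\ell^\wp)/u_\ell^\wp = x_{p,q}$; the point is to deduce $\lcm(u_k,u_\ell)/u_\ell = x_p$ in $S$. For this one must argue that $\deg_{x_p}(u_\ell)$ is forced to equal exactly $q-1$: if it were at most $q-2$, then $x_{p,q-1}$ would also appear in the polarized quotient, contradicting its single-variable form. With this exponent rigidity in hand, $\lcm(u_k,u_\ell)/u_\ell = x_p$ follows, and the remaining divisibility $x_{p,q} \mid \lcm(u_j^\wp, u_\ell^\wp)/u_\ell^\wp$ translates directly to $\deg_{x_p}(u_j) \ge q > \deg_{x_p}(u_\ell)$, i.e.\ $x_p \mid \lcm(u_j,u_\ell)/u_\ell$. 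This verifies the criterion for $I$ and closes the argument.
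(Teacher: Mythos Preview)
Your argument is correct. The paper does not actually give its own proof of this lemma; it simply cites it as \cite[Proposition 5.3]{Olteanu2009}, noting that it also follows from the more general \cite[Lemma 3.3]{Jahan2007}. Your self-contained verification via the combinatorial criterion of \cite[Lemma 8.2.3]{JT}, together with the compatibility $\lcm(u,v)^\wp=\lcm(u^\wp,v^\wp)$, is a clean and elementary route. The only subtle step---forcing $\deg_{x_p}(u_\ell)=q-1$ (and, symmetrically, $\deg_{x_p}(u_k)=q$) in the converse direction---is handled correctly by your observation that otherwise an additional variable $x_{p,q-1}$ (respectively $x_{p,q+1}$) would appear in the polarized quotient $\lcm(u_k^\wp,u_\ell^\wp)/u_\ell^\wp$, contradicting its single-variable form.
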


The previous lemma obviously also holds with respect to our modified polarization. Finally, we obtain the next result which gives a positive answer to Conjecture \ref{Conj:PowersHSCMverywell} for the class of bipartite graphs. 
\begin{Corollary}\label{Cor:HSHPPowers}
	Let $G$ be a Cohen--Macaulay bipartite graph with $2n$ vertices. Then $\HS_{k}((I(G)^\vee)^\ell)$ has linear quotients with respect to the lexicographic order induced by $x_n>y_n>x_{n-1}>y_{n-1}>\dots>x_1>y_1$, for all $k\ge0$ and all $\ell\ge1$.
	\end{Corollary}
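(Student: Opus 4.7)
\medskip
\noindent\textbf{Plan.} The strategy is to reduce the assertion about powers of $I(G)^\vee$ to the squarefree case by polarization, where Theorem \ref{Thm:HSCMVWCG} is available. First, since $G$ is a Cohen--Macaulay bipartite graph, the classification \cite{HH2005a} (see also \cite[Lemma 9.1.9 and Theorem 9.1.13]{JT}) lets me write $I(G)^\vee = H_P$ for some finite poset $(P,\succeq)$ with $|P|=n$. Therefore $(I(G)^\vee)^\ell = H_P^\ell$, and by Theorem \ref{Thm:PosetEll} we have $(H_P^\ell)^\wp = H_{P(\ell)}$ in the polynomial ring $R=K[x_{i,j},y_{i,j}\,:\,i\in[n],\,j\in[\ell]]$. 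The right--hand side $H_{P(\ell)}$ is itself an Hibi ideal, so it is the cover ideal of a Cohen--Macaulay bipartite, hence Cohen--Macaulay very well--covered, graph $G_{P(\ell)}$ on $2n\ell$ vertices.

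Next, I would apply Theorem \ref{Thm:HSCMVWCG} to $G_{P(\ell)}$, concluding that $\HS_k(H_{P(\ell)})$ has linear quotients with respect to the lexicographic order on $R$ induced by a suitable labeling of the variables $\{x_{i,j},y_{i,j}\}$ coming from Characterization \ref{char:veryWellCGCM}. Lemma \ref{Lem:HSpolarization} then gives
$$
\HS_k(H_{P(\ell)}) \;=\; \HS_k\bigl((H_P^\ell)^\wp\bigr) \;=\; \bigl(\HS_k(H_P^\ell)\bigr)^\wp,
$$
so $\bigl(\HS_k(H_P^\ell)\bigr)^\wp$ has linear quotients. Finally, Lemma \ref{Lem:IwpSets} transfers the linear quotients property back from the polarization, so $\HS_k(H_P^\ell) = \HS_k((I(G)^\vee)^\ell)$ has linear quotients as well. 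The squarefree case $\ell=1$ is already covered by Theorem \ref{Thm:HSCMVWCG}.

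The delicate point, which I expect to be the main obstacle, is the matching of admissible orders. Lemma \ref{Lem:IwpSets} is an if--and--only--if tied to the correspondence $u_i\leftrightarrow u_i^\wp$; to obtain exactly the order induced by $x_n>y_n>\dots>x_1>y_1$ on $\HS_k(H_P^\ell)$, I have to choose the linear extension used to label $P(\ell)$ and the resulting labeling of $G_{P(\ell)}$ so that the order given by Theorem \ref{Thm:HSCMVWCG} on $R$ pulls back, under depolarization $x_{i,j}\mapsto x_i$, $y_{i,j}\mapsto y_i$, to the desired lex order on $S$. The natural choice is a linear extension in which $(i,j)$ precedes $(i',j')$ whenever $i<i'$, breaking ties by $j$; combined with the modified polarization $x_i^a\mapsto x_{i,1}\cdots x_{i,a}$ and $y_i^b\mapsto y_{i,\ell}y_{i,\ell-1}\cdots y_{i,\ell+1-b}$, this forces the lex comparison of $u^\wp$ and $v^\wp$ in $R$ to agree with that of $u$ and $v$ in $S$. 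Checking this compatibility is a careful but routine bookkeeping exercise; once it is in place, the three--step chain (Theorem \ref{Thm:PosetEll} $\to$ Theorem \ref{Thm:HSCMVWCG} $\to$ Lemmas \ref{Lem:HSpolarization} and \ref{Lem:IwpSets}) yields the result with no further work.
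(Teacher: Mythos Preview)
Your proposal is correct and follows essentially the same route as the paper: identify $I(G)^\vee$ with an Hibi ideal $H_P$, use Theorem~\ref{Thm:PosetEll} to write $(H_P^\ell)^\wp=H_{P(\ell)}$, apply Theorem~\ref{Thm:HSCMVWCG} to $H_{P(\ell)}$, and then descend via Lemmas~\ref{Lem:HSpolarization} and~\ref{Lem:IwpSets}. The paper handles the order--matching issue you flag simply by writing down the explicit lex order $x_{n,\ell}>y_{n,\ell}>x_{n,\ell-1}>y_{n,\ell-1}>\dots>x_{1,1}>y_{1,1}$ on $R$, which is precisely the linear extension you describe, and then asserting (without further comment) that depolarization recovers the lex order on $S$; your treatment of this point is, if anything, slightly more careful.
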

\begin{proof}
	By \cite[Theorem 9.1.3]{JT}, there exists a poset $(P,\succeq)$ such that $I(G)^\vee=H_P$. Let $\ell\ge1$. Then by Theorem \ref{Thm:PosetEll}, $(H_P^\ell)^\wp=H_{P(\ell)}$. By Theorem \ref{Thm:HSCMVWCG}, for all $k\ge0$, $\HS_k(H_{P(\ell)})$ has linear quotients with respect to the lexicographic order induced by
	$$
	x_{n,\ell}>y_{n,\ell}>x_{n,\ell-1}>y_{n,\ell-1}>\dots> x_{n,1}>y_{n,1}>x_{n-1,\ell}>y_{n-1,\ell}>\dots>x_{1,1}>y_{1,1}.
	$$
	By Lemma \ref{Lem:HSpolarization}, $\HS_{k}(H_{P(\ell)})=\HS_{k}((H_P^\ell)^\wp)=\HS_k(H_P^\ell)^\wp$. Finally, applying Lemma \ref{Lem:IwpSets}, we obtain that $\HS_k(H_P^\ell)=\HS_k((I(G)^\vee)^\ell)$ has linear quotients with respect to the lexicographic order induced by $x_n>y_n>x_{n-1}>y_{n-1}>\dots>x_1>y_1$, as wanted.
\end{proof}

\textit{Acknowledgment.} We thank the referees for their helpful suggestions, that allowed
us to improve the quality of the paper.

\end{document}